\newtheorem{thm}{Theorem}[section]
\newtheorem*{theorem*}{Theorem}
\newtheorem{lem}[thm]{Lemma}
\newtheorem{prop}[thm]{Proposition}
\newtheorem{coro}[thm]{Corollary}
\newtheorem*{conjecture*}{Conjecture}
\newcommand\mtx[1]{\begin{bmatrix} #1 \end{bmatrix}}
\newcommand{\pmtx}[1]{\begin{pmatrix}#1\end{pmatrix}}
\newcommand\Itw[3]{\pmtx{ #1 \\ #2\,\, #3}}
\newcommand{\cc}{{\mathcal C}}
\newcommand{\ch}{{\mathcal H}}
\newcommand{\E}{{\mathbb E}}
\newcommand{\fusion}[3]{{\binom{#3}{#1\;#2}}}
\renewcommand{\hat}[1]{\widehat{#1}}
\newcommand{\Hom}{{\rm Hom}\,}
\newcommand{\FPdim}{{\rm FPdim}\,}
\newcommand{\Z}{\mathbb{Z}}
\newcommand{\C}{\mathbb{C}}
\def\C{{\mathbb C}}
\def\R{{\mathbb R}}
\def\Z{{\mathbb Z}}
\def\H{{\mathbb H}}
\def\1{{\bf 1}}
\def\tr{{\rm tr}}
\def \Hom{{\rm Hom}}
\def \a{\alpha}
\def \h{\mathfrak{h}}
\def \l{\lambda}
\def \g{\mathfrak{g}}
\def\wh{\widehat{\mathfrak h}}
\def\wg{\widehat{\mathfrak g}}
\def\qdim{{\rm qdim}}
\def\<{\langle}
\def\>{\rangle}
\def\cc{\mathcal C}
\def\ch{{\rm ch}}
\def\E{{\mathbb E}}
\newcommand\CC{\mathcal C}
\begin{document}
\title[Parafermion vertex operator algebras]{The irreducible modules and fusion rules for the parafermion vertex operator algebras}
\author{Chunrui Ai}
\address{School of Mathematics, Sichuan University,
 Chengdu 610064 China}
 \email{aichunrui0908@126.com}
\author{Chongying Dong}
\address{Department of Mathematics, University of California, Santa Cruz, CA 95064}
 \email{dong@ucsc.edu}
\thanks{The second author was supported by NSF grant DMS-1404741 and China NSF grant 11371261}
\author{Xiangxu Jiao}
\address{Department of Mathematics, East China Normal University, Shanghai 200241, China}
\email{xyjiao@math.ecnu.edu.cn}
\thanks{The third author was supported by  China NSF grant 11401213}
\author{Li Ren}
\address{School of Mathematics, Sichuan University,
 Chengdu 610064 China }
 \email{renl@scu.edu.cn}
 \thanks{The fourth author was supported by China Postdoctor special funding 2014T70866 and China NSF grant 11301356}
\maketitle

\begin{abstract}
The irreducible modules for the parafermion vertex operator algebra associated to any finite dimensional Lie algebra $\g$ and any positive integer $k$ are classified,  the quantum dimensions are computed and the fusion rules are determined.
\end{abstract}

\section{Introduction}

This paper is a continuation of \cite{DR} and \cite{DW4} on the parafermion vertex operator algebra $K(\g,k)$ associated to any finite dimensional simple Lie algebra $\g$ and positive integer $k.$ In particular, we identify the irreducible modules listed in \cite{DR}, compute the quantum dimensions and determine the fusion rules for $K(\g,k).$
In the case $\g=sl_2,$ these results have been obtained previously in \cite{ALY1} and \cite{DW4}.

Closely related to the $Z$-algebras \cite{LP}, \cite{LW1}, \cite{LW2} , the parafermion conformal field theory \cite{ZF} and the generalized vertex operator algebras
\cite{DL}, the parafermion vertex operator algebra $K(\g,k)$ is the commutant of the Heisenberg vertex operator algebra associated to the Cartan subalgebra $\h$ of $\g$ in the affine vertex operator algebra $L_{\wg}(k,0).$
The structure of the parafermion vertex operator algebras has been studied extensively in \cite{DLY}, \cite{DLWY},
\cite{DW1}. The representation theory of $K(\g,k)$ has also been understood well due to the recent work \cite{DW2},
\cite{ALY1}-\cite{ALY2} and \cite{DR}. Precisely, $K(\g,k)$ is $C_2$-cofinite \cite{ALY1},  \cite{M}, the irreducible modules
for $K(sl_2,k)$ are classsified   and rationality of $K(sl_2,k)$  is obtained in \cite{ALY1}-\cite{ALY2}.
The  irreducible modules and the rationality of $K(\g,k)$ for general $\g$ are determined  in \cite{DR} with the help from \cite{M}, \cite{KM} and \cite{CM}.

The quantum dimensions of modules for vertex operator algebras \cite{DJX1} or Frobenius-Perron dimensions for fusion category  \cite{ENO} play essential roles in this paper. According to \cite{H2}, \cite{H3}, the category $\CC_V$ of modules of a rational and $C_2$-cofinite vertex operator algebra $V$ under the tensor product defined in \cite{HL1, HL2, HL3, H1} is a braided fusion tensor category over $\C$. It is essentially proved in \cite{DJX1} that the quantum dimension
of irreducible module for $V$ is exactly the Frobenius-Perron dimension of the simple object in the category $\CC_V.$
This enables us to freely use the quantum dimensions and Frobenius-Perron dimension whenever it is convenient.

The quantum dimensions of irreducible $K(\g,k)$-modules are computed first. It is well known that
the irreducible modules for rational vertex operator algebra $L_{\wg}(k,0)$ are exactly the level $k$ integral highest weight modules $L_{\wg}(k,\Lambda)$ where $\Lambda$ is a dominant weight of $\g$ such that
$(\Lambda,\theta)\leq k$ and $\theta$ is the maximal root of $\g$ and $(\theta,\theta)=2$ \cite{FZ}, \cite{LL}.
The set of such $\Lambda$ is denoted by $P_+^k.$  Let $Q$ be the root lattice of $\g$ and $Q_L$ be the sublattice of $Q$ spanned by the long roots of $\g.$ As we will see, the dual lattice $Q_L^{\circ}$ is exactly
the weight lattice $P$ of $\g.$ The lattice vertex operator algebra $V_{\sqrt{k}Q_L}$ and $V_{\sqrt{k}Q_L}\otimes K(\g,k)$
are subalgebras of $L_{\wg}(k,0)$ \cite{K}, \cite{DW3}. Then as $V_{\sqrt{k}Q_L}\otimes K(\g,k)$-module, $L_{\wg}(k,\Lambda)$ has decomposition
$$L_{\widehat{\g}}(k,\Lambda)=\bigoplus_{i\in Q/kQ_L}V_{\sqrt{k}Q_L+\frac{1}{\sqrt{k}}(\Lambda+\beta_i)}\otimes M^{\Lambda,\Lambda+\beta_i}$$
where $Q=\bigcup_{i\in Q/kQ_L}(kQ_L+\beta_i)$ and $M^{\Lambda,\Lambda+\beta_i}$ is an irreducible $K(\g,k)$-module \cite{DR}. It turns out that the quantum dimension of each $M^{\Lambda,\Lambda+\beta_i}$ as $K(\g,k)$-module
equals to the quantum dimension of $L_{\wg}(k,\Lambda)$ as $L_{\wg}(k,0)$-module. It follows from \cite{C}
that the quantum dimension of $M^{\Lambda,\Lambda+\beta_i}$ is equal to $$\prod_{\alpha>0}\frac{(\Lambda+\rho,\alpha)_q}{(\rho,\alpha)_q}$$
 where $\alpha>0$ means that $\alpha$ is a positive root and $n_q=\frac{q^n-q^{-n}}{q-q^{-1}}$
 with $q=e^{\frac{\pi i}{k+h^{\vee}}}$ and $h^{\vee}$ is the dual Coxeter number of $\g.$
 This result is very useful in identifying these irreducible $K(\g,k)$-modules.

Let $\theta=\sum_{i=1}^la_i\alpha_i$ where $\{\alpha_1,\cdots,\alpha_l\}$ is the set of simple roots and
let $\Lambda_1,...,\Lambda_l$ be the fundamental weights of $\g.$
According to \cite{L1},\cite{L3}, if $a_i=1$ then $L_{\wg}(k,k\Lambda_i)$ is a simple current and
$L_{\wg}(k,k\Lambda_i)\boxtimes L_{\wg}(k,\Lambda)=L_{\wg}(k,\Lambda^{(i)})$ for any $\Lambda\in P_+^k$
where $\Lambda^{(i)}\in P_+^k$ is uniquely determined by $\Lambda$ and $i.$
One can show that $L_{\wg}(k,\Lambda)$ and $L_{\wg}(k,\Lambda^{(i)})$ are isomorphic $K(\g,k)$-modules.
This gives a nontrivial identification between irreducible $K(\g,k)$-modules $M^{\Lambda,\lambda}$
and $M^{\Lambda^{(i)},\lambda+k\Lambda_i}$ for any $\lambda\in\Lambda+Q.$ As a result,
the set
$$\{M^{\Lambda,\Lambda+\beta_j}|\Lambda\in P_+^k, j\in Q/kQ_L\}$$
has at most $\frac{|P_+^k||Q/kQ_L|}{|P/Q|}$ inequivalent irreducible $K(\g,k)$-modules. Using the relation between
the global dimensions of $L_{\wg}(k,0)$ and $V_{\sqrt{k}Q_L}\otimes K(\g,k)$ from the category theory one can conclude that the identification given in \cite{DR} is complete and $K(\g,k)$ has exactly $\frac{|P_+^k||Q/kQ_L|}{|P/Q|}$ inequivalent irreducible $K(\g,k)$-modules.

For the determination of the fusion rules, the quantum dimensions are used again. The connection between the quantum dimensions and the fusion product is the following equality
$$\qdim_V(M\boxtimes N)=\qdim_VM \cdot \qdim_VN$$
for any rational and $C_2$-cofinite vertex operator algebra $V$ and its irreducible modules $M,N$ \cite{DJX1}.
This quantum dimension equality gives an upper bound for the fusion rules among three irreducible
$V$-modules.  In a normal situation, one has found some intertwining operators among irreducible modules already. Using the upper bounds from the quantum dimension equality, one can check
if these intertwining operators give the enough fusion rules. This is exactly how the fusion product
$$M^{\Lambda_1,\Lambda_1+\beta_i}\boxtimes M^{\Lambda_2,\Lambda_2+\beta_j}=\sum_{\Lambda_3\in P_+^k}N_{\Lambda_1,\Lambda_2}^{\Lambda_3}M^{\Lambda_3, \Lambda_1+\Lambda_2+\beta_i+\beta_j}$$
is obtained for $\Lambda_1,\Lambda_2\in P_+^k$ and $i,j\in Q/kQ_L$ where $N_{\Lambda_1,\Lambda_2}^{\Lambda_3}$
are the fusion rules for irreducible $L_{\wg}(k,0)$-modules:
$$L_{\wg}(k,\Lambda_1)\boxtimes L_{\wg}(k,\Lambda_2)=\sum_{\Lambda_3\in P_+^k}N_{\Lambda_1,\Lambda_2}^{\Lambda_3}L_{\wg}(k,\Lambda_3).$$
The fusion rules in \cite{CL} and \cite{DW4} are computed in the same fashion.

The paper is organized as follows. We review the basics on quantum dimensions from the theory of vertex operator algebras \cite{DJX1} and  the Frobenius-Perron dimensions from the fusion tensor category \cite{ENO}
and discuss their properties and connections in Section 2. We recall the construction of the parafermion vertex operator algebras $K(\g,k)$ and their representations \cite{DR} in Section 3. We also give an elementary result on the
weight lattice $P$ and the dual lattice $Q_L^{\circ}$ for a simple Lie algebra $\g.$ The Section 4 is on the computation of quantum dimensions of the irreducible $K(\g,k)$-modules. We finish the identification of the irreducible $K(\g,k)$-modules and the determination of the fusion rules in Section 5.

\section{The Frobenius-Perron dimensions and the quantum dimensions}

In this section we review the basic properties of the Frobenius-Perron dimensions from the fusion category and
the quantum dimensions from the vertex operator algebras. In the case the fusion category is the module category for  a rational, $C_2$-cofinite vertex operator algebra, we discuss the connection between these dimensions.

 We first collect basics of the fusion categories and the Frobenius-Perron dimensions from \cite{BK}, \cite{ENO} and \cite{DMNO}.

Let $\cc$ be a fusion category \cite{BK}. That is, $\cc$ is
 a semisimple  rigid monoidal category with finite dimensional spaces of morphisms,
finitely many irreducible objects and an irreducible unit object $\1_{\cc}.$
We use  $K(\cc)$ to denote the Grothendieck ring of $\cc.$  According to \cite{ENO}
we have
\begin{thm}\label{ENO}
There exists
a unique ring homomorphism $\FPdim : K(\cc)\to \R$ such that $\FPdim_{\cc}(X) > 0$ for all
$X\in\cc.$
\end{thm}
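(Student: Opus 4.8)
The plan is to derive both existence and uniqueness from the Perron--Frobenius theorem, which is in essence the argument of \cite{ENO}. List the isomorphism classes of simple objects of $\cc$ as $X_0=\1_{\cc},X_1,\dots,X_n$, so that $K(\cc)$ is free as a $\Z$-module on $\{[X_i]\}$ with $[X_i]\cdot[X_j]=\sum_k N_{ij}^k[X_k]$ and $N_{ij}^k\in\Z_{\geq 0}$. Passing to $K(\cc)\otimes_{\Z}\R$, let $M_i$ be the matrix of left multiplication by $[X_i]$ in the basis $\{[X_i]\}$, and let $L$ and $R$ be the matrices of left and of right multiplication by $b:=\sum_i[X_i]$. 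The essential preliminary step is that $L$ and $R$ have all entries strictly positive; this is where rigidity and semisimplicity of $\cc$ enter. Indeed, $\1_{\cc}$ is a direct summand of both $X_a\otimes X_a^{*}$ and $X_a^{*}\otimes X_a$ for every $a$, so for any simple $X_a,X_b$ the object $X_b$ is a summand of $X_b\otimes X_a^{*}\otimes X_a=(X_b\otimes X_a^{*})\otimes X_a$, hence of $X_c\otimes X_a$ for some simple summand $X_c$ of $X_b\otimes X_a^{*}$; symmetrically $X_b$ is a summand of $X_a\otimes X_c$ for some simple $X_c$. Thus every entry of $L$, respectively of $R$, is at least $1$.

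Granting this, Perron--Frobenius gives $L$ a simple largest eigenvalue $\lambda>0$ whose eigenspace is one-dimensional, spanned by a vector $v$ with strictly positive entries, and $v$ is (up to a positive scalar) the only nonnegative eigenvector of $L$. Since left and right multiplications commute, $L(Rv)=R(Lv)=\lambda(Rv)$, so $Rv=\mu v$ for some $\mu>0$; applying the same uniqueness statement to the positive matrix $R$, we find that $v$ is also its Perron eigenvector, with $\ker(R-\mu\,\id)=\R v$. Now each $M_i$ commutes with $R$, so $M_iv\in\ker(R-\mu\,\id)=\R v$; write $M_iv=d_iv$, where $d_i>0$ because $M_i$ is a nonzero nonnegative matrix and $v>0$. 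Define $\FPdim_{\cc}([X_i]):=d_i$ and extend $\R$-linearly. Applying $M_iM_j=M_{[X_i]\cdot[X_j]}=\sum_kN_{ij}^kM_k$ to $v$ gives $d_id_j=\sum_kN_{ij}^kd_k$, so $\FPdim_{\cc}$ is multiplicative; and $M_0=\id$ forces $d_0=1$, so it is unital. Hence $\FPdim_{\cc}$ restricts to a ring homomorphism $K(\cc)\to\R$ that is positive on each $[X_i]$, and since every nonzero object is a nonnegative combination of simple objects, $\FPdim_{\cc}(X)>0$ for all $X\in\cc$.

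For uniqueness, let $\phi\colon K(\cc)\to\R$ be a ring homomorphism with $\phi([X_i])>0$ for all $i$, and put $u:=(\phi([X_i]))_i$. Applying $\phi$ to $b\cdot[X_i]=\sum_{j,k}N_{ji}^k[X_k]$ gives $\phi(b)\,u_i=\sum_k\big(\sum_jN_{ji}^k\big)u_k=(L^{\mathrm{T}}u)_i$, so $u$ is a strictly positive eigenvector of the positive matrix $L^{\mathrm{T}}$; the same computation for $\FPdim_{\cc}$ shows that $d:=(d_i)_i$ is also a positive eigenvector of $L^{\mathrm{T}}$. A positive matrix has a unique nonnegative eigenvector up to a positive scalar, so $u$ and $d$ are proportional, and as their $0$th entries both equal $1$ (both $\phi$ and $\FPdim_{\cc}$ send $\1_{\cc}$ to $1$) we get $u=d$, i.e.\ $\phi=\FPdim_{\cc}$. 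The single genuine obstacle in all of this is the strict positivity of $L$ and $R$ --- the ``connectedness'' of the fusion ring $K(\cc)$ --- in the first paragraph; once it is in hand, the rest is a direct application of the Perron--Frobenius theorem, the only subtlety being that $K(\cc)$ need not be commutative, which is exactly why both left and right multiplication by $b$ are required.
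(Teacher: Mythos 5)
The paper does not prove this statement at all: it is quoted verbatim from [ENO] ("According to \cite{ENO} we have\dots"), so there is no in-paper argument to compare against. Your proof is correct and is essentially the argument of [ENO] (see also Etingof--Gelaki--Nikshych--Ostrik, \emph{Tensor Categories}, Prop.~3.3.6). You have correctly isolated the two genuinely nontrivial inputs: (i) the strict positivity of the matrices $L$ and $R$ of left and right multiplication by $b=\sum_i[X_i]$, which you derive from rigidity via the fact that $\1_{\cc}$ is a constituent of $X\otimes X^{*}$ and of the dual-side product, so that every simple occurs in some $X_c\otimes X_a$ with either factor prescribed; and (ii) the Perron--Frobenius uniqueness of the nonnegative eigenvector of a positive matrix, applied to $L$, $R$ and $L^{\mathrm{T}}$. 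The passage from the common eigenvector $v$ (with $M_iv=d_iv$) to multiplicativity of $d$, and the uniqueness argument via the left eigenvector equation $L^{\mathrm{T}}u=\phi(b)u$, are both sound; your remark that one must use \emph{both} left and right multiplication by $b$ because $K(\cc)$ need not be commutative is exactly the right point of care. The only cosmetic caveat is that $\FPdim_{\cc}(X)>0$ should be read for nonzero $X$ (the zero object has dimension $0$), but that imprecision is already in the statement as quoted.
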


The $\FPdim_{\cc}(X)$ is called the Frobenius-Perron dimension of $X\in\cc.$
One can also define the Frobenius-Perron dimension for the category $\cc:$
$$ \FPdim (\cc)=\sum_{X\in {\mathcal O}(\cc)}\FPdim_{\cc}(X)^2$$
where ${\mathcal O}(\cc)$ is the equivalence classes of the simple objects in $\cc.$

An {\em algebra} in a monoidal category $\mathcal{C}$ is an object $A\in \mathcal{C}$ which is an associative
algebra \cite{O}. Let  $\cc_A$ be the left $A$-module category. An algebra $A\in \cc$ is said to be \'{e}tale if it is commutative and the  $\cc_A$ is semisimple.  We say that an \'{e}tale algebra $A$  is connected if
$\dim\Hom_{\cc} (\1, A) = 1.$  Let $\cc_A^0$ be the subcategory of $\cc_A$ consists of dyslectic modules \cite{DMNO}.
The following result from \cite{ENO} and \cite{DMNO} is important in this paper.
\begin{thm}\label{DMNO}
Let $\cc$ be a fusion category and  $A $ be a connected \'{e}tale algebra
in $\cc$. Then
$$(\FPdim_\cc A) \FPdim \cc_A = \FPdim \cc$$
and
$$(\FPdim_\cc A)^2 \FPdim \cc_A^0 = \FPdim \cc$$
\end{thm}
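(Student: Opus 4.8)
The plan is to deduce both equalities from the uniqueness assertion in Theorem \ref{ENO}, using the free module functor throughout and, for the second equality, the braided structure that is already needed to define $\cc_A^0$. Since $A$ is a connected \'etale algebra, $A=\1_{\cc_A}$ is simple and $\cc_A$ is again a fusion category, with tensor product $M\otimes_A N$ (commutativity of $A$ is used here). The free module functor $F\colon\cc\to\cc_A$, $F(X)=A\otimes X$, is then a unital tensor functor whose right adjoint is the forgetful functor $G\colon\cc_A\to\cc$, and $F$ is dominant, since every $M\in\cc_A$ is a direct summand of $F(G(M))=A\otimes M$ (the action map $A\otimes M\to M$ splits in the semisimple category $\cc_A$). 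Consequently $\FPdim_{\cc_A}\circ[F]\colon K(\cc)\to\R$ is a unital ring homomorphism that is positive on objects, so Theorem \ref{ENO} forces $\FPdim_{\cc_A}(A\otimes X)=\FPdim_{\cc}(X)$ for all $X\in\cc$.

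For the first equality, introduce the regular elements $R_{\cc}=\sum_{X\in\co(\cc)}\FPdim_{\cc}(X)\,X$ and $R_{\cc_A}=\sum_{M\in\co(\cc_A)}\FPdim_{\cc_A}(M)\,M$, characterised up to scalars by positivity of coefficients together with $Y\cdot R_{\cc}=\FPdim_{\cc}(Y)R_{\cc}$ and the analogous relation in $\cc_A$. Since $F$ is a dominant tensor functor, it carries $R_{\cc}$ to a positive scalar multiple of $R_{\cc_A}$, say $F(R_{\cc})=\mu\,R_{\cc_A}$ with $\mu>0$ (a standard property of surjective tensor functors between fusion categories, see \cite{ENO}). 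Applying $G$ and using $A\otimes R_{\cc}=\FPdim_{\cc}(A)\,R_{\cc}$ in $K(\cc)\otimes\R$, then comparing the coefficient of $\1_{\cc}$ on both sides (the coefficient of $\1_{\cc}$ in $G(R_{\cc_A})$ equals $1$, by Frobenius reciprocity $\Hom_{\cc}(\1_{\cc},G(M))\cong\Hom_{\cc_A}(A,M)$), we obtain $\mu=\FPdim_{\cc}(A)$. Finally, applying $\FPdim_{\cc_A}$ to $F(R_{\cc})=\FPdim_{\cc}(A)\,R_{\cc_A}$ gives $\sum_{X}\FPdim_{\cc}(X)^2=\FPdim_{\cc}(A)\sum_{M}\FPdim_{\cc_A}(M)^2$, that is, $(\FPdim_{\cc}A)\,\FPdim\cc_A=\FPdim\cc$.

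For the second equality it suffices, given the first, to prove $\FPdim\cc_A=(\FPdim_{\cc}A)\,\FPdim\cc_A^0$; combining the two then yields $(\FPdim_{\cc}A)^2\FPdim\cc_A^0=\FPdim\cc$. Here the braiding on $\cc$ is indispensable, as it is already needed to make sense of the dyslectic subcategory $\cc_A^0$, which is a braided fusion subcategory of $\cc_A$. I would pass through the category ${}_A\cc_A$ of $A$-bimodules in $\cc$: by Ostrik's description of dual categories, ${}_A\cc_A$ is the dual $\cc^{*}_{\cc_A}$ of $\cc$ relative to the module category $\cc_A$ (\cite{O}), so $\FPdim({}_A\cc_A)=\FPdim\cc$; and, using the braiding to analyse $A$-bimodules in terms of dyslectic modules, one shows $\FPdim({}_A\cc_A)=(\FPdim\cc_A)^2/\FPdim\cc_A^0$ (corresponding to an equivalence ${}_A\cc_A\simeq\cc_A\boxtimes_{\cc_A^0}\cc_A$). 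Equating and inserting the first equality produces $\FPdim\cc_A^0=\FPdim\cc/(\FPdim_{\cc}A)^2$, as desired. The main obstacle is precisely this last step: the passage from $\cc$ to $\cc_A$ and the first equality are essentially formal once Theorem \ref{ENO} is available, whereas controlling $\FPdim\cc_A^0$ genuinely requires the braiding and the structure theory of dyslectic modules; the detailed verification of the bimodule analysis above is carried out in \cite{DMNO}, which one may also simply cite for the value of $\FPdim\cc_A^0$.
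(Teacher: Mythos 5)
The paper gives no proof of this theorem at all --- it is quoted verbatim from \cite{ENO} and \cite{DMNO} --- so the only meaningful comparison is between your argument and the sources. Your proof of the first equality is correct and is exactly the standard one (Lemma 3.11 of \cite{DMNO}, resting on Propositions 8.8 and 8.20 of \cite{ENO}): $F=A\otimes(-)$ is a dominant unital tensor functor $\cc\to\cc_A$, so $\FPdim_{\cc_A}\circ[F]=\FPdim_{\cc}$ by the uniqueness in Theorem \ref{ENO}; writing $F(R_{\cc})=\mu R_{\cc_A}$ and comparing the coefficient of $\1$ in $G(F(R_{\cc}))=\FPdim_{\cc}(A)\,R_{\cc}$ against $\mu\,G(R_{\cc_A})$ (connectedness of $A$ gives coefficient $1$ for $\1$ in $G(R_{\cc_A})$) yields $\mu=\FPdim_{\cc}(A)$, and applying $\FPdim_{\cc_A}$ finishes. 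No complaints there.

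The second equality is where the genuine gap lies. The formula you assert, $\FPdim({}_A\cc_A)=(\FPdim\cc_A)^2/\FPdim\cc_A^0$ (equivalently the equivalence ${}_A\cc_A\simeq\cc_A\boxtimes_{\cc_A^0}\cc_A$), is not a consequence of anything you established, and it is in fact false without assuming the braiding on $\cc$ is non-degenerate. Take $\cc=\Rep G$ with its symmetric braiding and $A$ the function algebra on a nontrivial finite group $G$: then $A$ is connected \'etale, $\cc_A\simeq\cc_A^0\simeq\mathrm{Vec}$ (in a symmetric category every module is dyslectic), so your formula would give $\FPdim({}_A\cc_A)=1$, whereas ${}_A\cc_A\simeq\mathrm{Vec}_G$ has $\FPdim=|G|$; correspondingly $(\FPdim_{\cc}A)^2\FPdim\cc_A^0=|G|^2\neq|G|=\FPdim\cc$, so the second identity itself fails. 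The correct statement (Corollary 3.32 of \cite{DMNO}) requires $\cc$ non-degenerate --- a hypothesis the paper's transcription silently drops but which holds in its only application, since $\cc_V$ is modular by \cite{H3}. The standard proof in that case uses $Z(\cc_A)\simeq\cc\boxtimes\overline{\cc_A^0}$ together with $\FPdim Z(\cc_A)=(\FPdim\cc_A)^2$; this is close in spirit to your bimodule picture but genuinely needs non-degeneracy (the generating property of the two $\alpha$-inductions). Since you end by citing \cite{DMNO} for precisely this step, your proposal does not actually prove the second equality beyond the citation the paper already makes; to repair it, add the non-degeneracy hypothesis and carry out the center (or $\alpha$-induction) computation.
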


We now turn to the theory of vertex operator algebra. Let $V=(V,Y,\1,\omega)$ be a vertex operator algebra (cf. \cite{B} and \cite{FLM}). Here are some basics on vertex
operator algebras.

The $V$ is of {\em CFT type} if $V$ is simple,  $V=\oplus_{n\geq 0}V_n$ and $V_0=\C {\bf 1}$ \cite{DLMM}.

The $V$ is called $C_2$-cofinite if $\dim V/C_2(V)<\infty$ where
$C_2(V)$ is the subspace of $V$ spanned by $u_{-2}v$ for $u,v\in V$ \cite{Z}.

A {\em weak} $V$-module  $M=(M,Y_M)$ is a vector space equipped with a linear map
$$\begin{array}{l}
V\to (\mbox{End}\,M)[[z^{-1},z]]\label{map}\\
v\mapsto\displaystyle{Y_M(v,z)=\sum_{n\in \Z}v_nz^{-n-1}\ \ (v_n\in\mbox{End}\,M)}
\mbox{ for }v\in V\label{1/2}
\end{array}$$
satisfying the following conditions for $u,v\in V$,
$w\in M$:
\begin{eqnarray*}\label{e2.1}
& &v_nw=0\ \ \  				
\mbox{for}\ \ \ n\in \Z \ \ \mbox{sufficiently\ large};\label{vlw0}\\
& &Y_M({\bf 1},z)=1;\label{vacuum}
\end{eqnarray*}
 \begin{equation*}\label{jacobi}
\begin{array}{c}
\displaystyle{z^{-1}_0\delta\left(\frac{z_1-z_2}{z_0}\right)
Y_M(u,z_1)Y_M(v,z_2)-z^{-1}_0\delta\left(\frac{z_2-z_1}{-z_0}\right)
Y_M(v,z_2)Y_M(u,z_1)}\\
\displaystyle{=z_2^{-1}\delta\left(\frac{z_1-z_0}{z_2}\right)
Y_M(Y(u,z_0)v,z_2)}.
\end{array}
\end{equation*}

An ({\em ordinary}) $V$-module is a  weak $V$-module $M$ which
is $\C$-graded
$$M=\bigoplus_{\lambda \in{\C}}M_{\lambda} $$
such that $\dim M_{\l}$ is finite and $M_{\l+n}=0$
for fixed $\l$ and $n\in {\Z}$ small enough, where
$M_{\l}$ is the eigenspace for $L(0)$ with eigenvalue $\lambda:$
$$L(0)w=\l w=(\mbox{wt}\,w)w, \ \ \ w\in M_{\l}.$$

An {\em admissible} $V$-module is
a  weak $V$-module $M$ which  carries a
${\Z}_{+}$-grading
$$M=\bigoplus_{n\in {\Z}_{+}}M(n)$$
($\Z_+$ is the set all nonnegative integers) such that if $r, m\in {\Z} ,n\in {\Z}_{+}$ and $a\in V_{r}$
then
$$a_{m}M(n)\subseteq M(r+n-m-1).$$
Note that any ordinary module is an admissible module.

A vertex operator algebra $V$ is called {\em rational} if any admissible
module is a direct sum of irreducible admissible modules \cite{DLM1}. It was proved in
\cite{DLM2} that if $V$ is rational then there are only
finitely many inequivalent irreducible admissible modules $V=M^0,...,M^p$
and each irreducible admissible module is an ordinary module. Each $M^i$ has weight space decomposition
$$M^i=\bigoplus_{n\geq 0}M^i_{\lambda_i+n}$$
where $\lambda_i\in\C$ is a complex number such that $M^i_{\lambda_i}\ne 0$ and $M^i_{\lambda_i+n}$ is the eigenspace of $L(0)$ with eigenvalue $\lambda_i+n.$ The $\lambda_i$ is called the conformal weight of $M^i.$
If $V$ is both rational and
$C_2$-cofinite, then $\lambda_i$ and central charge $c$ are rational numbers \cite{DLM3}.

In the rest of this  paper we assume the following:
\begin{enumerate}
\item[(V1)] $V=\oplus_{n\geq 0}V_n$ is a vertex operator algebra  of CFT type,
\item[(V2)] $V$ is $C_{2}$-cofinite and  rational,
\item[(V3)] The conformal weight $\lambda_i$ is nonnegative and $\lambda_i=0$ if and only if $i=0.$
\end{enumerate}

Let $M = \bigoplus_{\lambda\in \mathbb{C}}{M_{\lambda}}$ be a $V$-module. Set $M'
= \bigoplus_{\lambda \in \mathbb{C}}{M_\lambda^*}$, the restricted
dual of $M$. It is proved in \cite{FHL} that $M'=(M', Y')$ is naturally a
$V$-module such that $$\langle Y'(a, z)u', v\rangle  = \langle u', Y(e^{zL(1)}(-z^{-2})^{L(0)}a, z^{-1})v\rangle ,$$\\for $a\in V, u'\in
M'$ and $v\in M,$ and $(M')'\cong M$. Moreover, if $M$ is irreducible, so
is $M'.$ A $V$-module $M$ is said to be self dual if $M$ and $M'$
are isomorphic.

Recall from \cite{FHL} the  notion of intertwining operator and fusion
rule. Let $W^i=(W^i,Y_{W^i})$ for $i=1,2,3$ be  $V$-modules.
An intertwining
operator $\mathcal {Y}( \cdot , z)$ of type $\Itw{W^3}{W^1}{W^2}$ is a linear map
$$
\mathcal
{Y}(\cdot, z): W^1\rightarrow \Hom(W^2, W^3)\{z\}, \, v^1\mapsto
\mathcal {Y}(v^1, z) = \sum_{n\in \mathbb{C}}{v_n^1z^{-n-1}}
$$
such that
\begin{enumerate}
\item[(i)] For any $v^1\in W^1, v^2\in W^2$ and $\lambda \in \mathbb{C},
v_{n+\lambda}^1v^2 = 0$ for $n\in \mathbb{Z}$ sufficiently large.
\item[(ii)] For any $a \in V, v^1\in W^1$,
$$z_0^{-1}\delta(\frac{z_1-z_2}{z_0})Y_{W^3}(a, z_1)\mathcal
{Y}(v^1, z_2)-z_0^{-1}\delta(\frac{z_1-z_2}{-z_0})\mathcal{Y}(v^1,
z_2)Y_{W^2}(a, z_1)$$
$$=z_2^{-1}\delta(\frac{z_1-z_0}{z_2})\mathcal{Y}(Y_{W^1}(a, z_0)v^1, z_2).$$
\item[(iii)] For $v^1\in W^1$, $\dfrac{d}{dz}\mathcal{Y}(v^1,
z)=\mathcal{Y}(L(-1)v^1, z)$.
\end{enumerate}
The intertwining operators of type $\Itw{W^3}{W^1}{W^2}$ form a vector space denoted by $$I_V\Itw{W^3}{W^1}{W^2}.$$
 The dimension $N^{W^3}_{W^1, W^2}$ of $I_V\Itw{W^3}{W^1}{W^2}$ is called the
fusion rule of type $\Itw{W^3}{W^1}{W^2}.$
It is proved in \cite{ABD} that the fusion rules for three irreducible modules are finite.

The following two propositions can be found in \cite{ADL}.
\begin{prop}\label{adl1}
Let $V$ be a vertex operator algebra and let $W^1,\,W^2,\, W^3$ be
$V$-modules among which $W^1$ and $W^2$ are irreducible.
Suppose that $U$ is a vertex operator subalgebra
of $V$ (with the same Virasoro element) and that
$N^1$ and $N^2$ are irreducible $U$-submodules of
$W^1$ and $W^2$, respectively.
Then the restriction map from $I_V\fusion{W^1}{W^2}{W^3}$
to $I_U\fusion{N^1}{N^2}{M^3}$
is injective. In particular,
$$\dim I_V\fusion{W^1}{W^2}{W^3}\leq\dim I_U\fusion{N^1}{N^2}{W^3}.$$
\end{prop}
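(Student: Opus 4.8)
The plan is to check that the restriction map is well defined and then prove it is injective, the injectivity ultimately resting on the nonvanishing of intertwining operators between irreducible modules. For well-definedness: given $\mathcal Y\in I_V\fusion{W^1}{W^2}{W^3}$, regard $W^3$ as a $U$-module by restriction and let $\bar{\mathcal Y}\colon N^1\to\Hom(N^2,W^3)\{z\}$ be $v^1\mapsto\mathcal Y(v^1,z)|_{N^2}$. That $\bar{\mathcal Y}$ is an intertwining operator of type $\fusion{N^1}{N^2}{W^3}$ over $U$ follows at once from the axioms: lower truncation (i) is inherited; the Jacobi identity (ii) for $\bar{\mathcal Y}$ and $a\in U$ is the special case $a\in U\subseteq V$ of (ii) for $\mathcal Y$; and (iii) holds because $U$ has the same Virasoro element, so $L(-1)=\omega_0$ is unchanged and preserves $N^1$. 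Since $\mathcal Y\mapsto\bar{\mathcal Y}$ is linear, it suffices to prove injectivity, after which the dimension inequality is automatic.

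For injectivity, suppose $\bar{\mathcal Y}=0$, so $\mathcal Y(v^1,z)v^2=0$ for all $v^1\in N^1$ and $v^2\in N^2$; I want $\mathcal Y=0$. The tools are the two standard consequences of the Jacobi identity (ii): the commutator formula $[a_m,\mathcal Y(v^1,z)]=\sum_{j\ge 0}\binom{m}{j}z^{m-j}\mathcal Y(a_j v^1,z)$ and the iterate (associativity) formula expressing $\mathcal Y(a_p v^1,z)$ as a coefficientwise-finite combination of $a_m\circ\mathcal Y(v^1,z)$ and $\mathcal Y(v^1,z)\circ a_m$. From the iterate formula, $\{x\in W^1:\mathcal Y(x,z)=0\}$ is a $V$-submodule of $W^1$, so if $\mathcal Y\ne 0$ then $\mathcal Y(x,z)\ne 0$ for every nonzero $x$; dually, from the commutator formula the total right kernel $\{y\in W^2:\mathcal Y(x,z)y=0\ \forall x\in W^1\}$ is a $V$-submodule of $W^2$, hence $0$ whenever $\mathcal Y\ne 0$. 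However $\bar{\mathcal Y}=0$ asserts vanishing only on $N^1\times N^2$, not on $N^1\times W^2$ or $W^1\times N^2$, so these observations do not by themselves finish the argument.

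The remaining step, which I expect to be the real obstacle, is the fact that a nonzero $\mathcal Y$ of type $\fusion{W^1}{W^2}{W^3}$ with $W^1,W^2$ irreducible satisfies $\mathcal Y(v^1,z)v^2\ne 0$ for \emph{all} nonzero $v^1\in W^1$ and $v^2\in W^2$. This is delicate precisely because the annihilator of a fixed pair is only a $U$-submodule, not a $V$-submodule, so one cannot simply propagate vanishing from $N^1\times N^2$ outward along a word-length filtration: removing a $V$-mode from the second variable reintroduces $V$-modes on the first variable and leaves $N^1$. The way I would finish is to establish this nonvanishing by combining the commutator and iterate formulas with the $L(0)$-grading and a leading-term (Zhu-algebra-type) analysis, using irreducibility of $W^1$ and $W^2$ crucially (cf. the treatment of intertwining operators in \cite{FHL}, \cite{DL}). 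Granting it, choose nonzero $v^1\in N^1$ and $v^2\in N^2$: since $\mathcal Y(v^1,z)v^2=0$, nonvanishing forces $\mathcal Y=0$. Hence the restriction map is injective, and in particular $\dim I_V\fusion{W^1}{W^2}{W^3}\le\dim I_U\fusion{N^1}{N^2}{W^3}$.
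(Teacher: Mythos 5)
Your proposal is correct and follows the same route as the source of this statement: the paper itself gives no proof of this proposition but imports it verbatim from \cite{ADL}, where the argument is exactly the one you outline --- restriction is a well-defined linear map (the Jacobi identity for $a\in U\subseteq V$ is a special case of that for $V$, and $L(-1)$ is unchanged since the Virasoro element is shared), and injectivity reduces to the fact that a nonzero intertwining operator with $W^1,W^2$ irreducible satisfies $\mathcal Y(w^1,z)w^2\neq 0$ for all nonzero $w^1,w^2$, which is \cite{DL}, Proposition~11.9. The one soft spot is that you leave that key lemma as a citation, and the mechanism you sketch for it (an $L(0)$-graded ``leading-term/Zhu-algebra'' analysis) is not how it is actually proved and I would not trust it as stated. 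The standard argument is cleaner: for $a\in V$ and $w^1\in W^1$ choose $k$ with $a_jw^1=0$ for $j\geq k$; the Jacobi identity then yields the weak commutativity $(z_1-z_2)^k\,Y(a,z_1)\mathcal Y(w^1,z_2)=(z_1-z_2)^k\,\mathcal Y(w^1,z_2)Y(a,z_1)$, and since multiplication by $(z_1-z_2)^k$ is injective on $W^3\{z_2\}((z_1))$, the vanishing of $\mathcal Y(w^1,z_2)w^2$ forces $\mathcal Y(w^1,z_2)a_nw^2=0$ for all $n$. Hence the right kernel of $\mathcal Y(w^1,\cdot)$ --- which, as you rightly observe, is not visibly a $V$-submodule from the commutator formula alone --- is in fact a $V$-submodule of $W^2$, so irreducibility gives $\mathcal Y(w^1,z)=0$; the iterate formula then makes $\{x\in W^1:\mathcal Y(x,z)=0\}$ a nonzero $V$-submodule of the irreducible $W^1$, whence $\mathcal Y=0$. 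With that lemma supplied (or simply cited as \cite{DL}, Proposition~11.9, which is what \cite{ADL} does), your proof is complete.
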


Let $V^{1}$ and $V^{2}$ be vertex operator algebras,
let $W^{i}\,(i=1,2,3)$ be $V^{1}$-modules and
let $N^{i}\,(i=1,2,3)$ be $V^{2}$-modules. Then $W^i\otimes N^i\, (i=1,2,3)$ are $V^1\otimes V^2$-modules
\cite{FHL}.

\begin{prop}\label{adl2} If $N_{W^1,W^2}^{W^3}<\infty$ or $N_{N^1,N^2}^{N^3}<\infty$ then
$$N_{W^1\otimes N^1,W^2\otimes N^2}^{W^3\otimes N^3}=N_{W^1,W^2}^{W^3}N_{N^1,N^2}^{N^3}.$$
\end{prop}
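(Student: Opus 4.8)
The statement to prove is Proposition~\ref{adl2}: that fusion rules multiply under tensor products of vertex operator algebras, namely $N_{W^1\otimes N^1,W^2\otimes N^2}^{W^3\otimes N^3}=N_{W^1,W^2}^{W^3}N_{N^1,N^2}^{N^3}$, under the hypothesis that at least one of the two factors on the right is finite.

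My plan is to construct an explicit linear isomorphism between $I_{V^1\otimes V^2}\Itw{W^3\otimes N^3}{W^1\otimes N^1}{W^2\otimes N^2}$ and $I_{V^1}\Itw{W^3}{W^1}{W^2}\otimes I_{V^2}\Itw{N^3}{N^1}{N^2}$. In one direction, given intertwining operators $\cy^{(1)}(\cdot,z)$ of type $\Itw{W^3}{W^1}{W^2}$ for $V^1$ and $\cy^{(2)}(\cdot,z)$ of type $\Itw{N^3}{N^1}{N^2}$ for $V^2$, I would define $\cy(u\otimes a,z) := \cy^{(1)}(u,z)\otimes \cy^{(2)}(a,z)$ on simple tensors and extend bilinearly; the first step is to verify the three axioms (i)--(iii) in the definition of intertwining operator for $V^1\otimes V^2$. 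The truncation condition (i) and the $L(-1)$-derivative condition (iii) follow immediately from the Leibniz rule and the corresponding conditions for $\cy^{(1)},\cy^{(2)}$, since $L(-1)$ for $V^1\otimes V^2$ acts as $L^{(1)}(-1)\otimes 1 + 1\otimes L^{(2)}(-1)$. The Jacobi identity (ii) is checked by testing against elements $a^1\otimes a^2 \in V^1\otimes V^2$ and using that the two tensor factors act on disjoint slots and that $\delta$-function identities in $z_0,z_1,z_2$ are the same on both sides; one uses the Jacobi identity for $\cy^{(1)}$ with $a^1$ and for $\cy^{(2)}$ with $a^2$ and multiplies. This gives a well-defined bilinear map and hence a linear map $\mu: I_{V^1}\Itw{W^3}{W^1}{W^2}\otimes I_{V^2}\Itw{N^3}{N^1}{N^2}\to I_{V^1\otimes V^2}\Itw{W^3\otimes N^3}{W^1\otimes N^1}{W^2\otimes N^2}$.

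For injectivity of $\mu$, I would choose dual bases: if $\{\cy^{(1)}_r\}$ is a basis of $I_{V^1}\Itw{W^3}{W^1}{W^2}$ and $\{\cy^{(2)}_s\}$ of $I_{V^2}\Itw{N^3}{N^1}{N^2}$, then a relation $\sum_{r,s}c_{rs}\,\cy^{(1)}_r(u,z)\otimes\cy^{(2)}_s(a,z)=0$ for all $u,a$ forces, via linear independence of the $\cy^{(1)}_r(u,\cdot)$ as maps into $\Hom(W^2,W^3)\{z\}$ evaluated against suitable pairings with $W^2$, $(W^3)'$ and likewise for the second slot, that all $c_{rs}=0$; the finiteness of the fusion rules (from \cite{ABD}, which the excerpt cites for irreducible modules, together with the hypothesis that one factor is finite) guarantees the bases are finite, so this argument is clean. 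For surjectivity, I would take an arbitrary $\cy$ of type $\Itw{W^3\otimes N^3}{W^1\otimes N^1}{W^2\otimes N^2}$ and restrict it: using Proposition~\ref{adl1} with the subalgebra $V^1\otimes \C\1 \subseteq V^1\otimes V^2$ and the submodules $W^i\otimes n^i$ for fixed vectors $n^i\in N^i$, one extracts $V^1$-intertwining data, and symmetrically for $V^2$; then one shows by a rank argument — the dimension of the image of $\mu$ is $N_{W^1,W^2}^{W^3}N_{N^1,N^2}^{N^3}$ by injectivity, while Proposition~\ref{adl1} applied twice gives $N_{W^1\otimes N^1,W^2\otimes N^2}^{W^3\otimes N^3}\le N_{W^1,W^2}^{W^3}N_{N^1,N^2}^{N^3}$ (restricting first to $V^1\otimes\C\1$ then decomposing over $V^2$) — that $\mu$ is onto. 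Combining the inequality from Proposition~\ref{adl1} with injectivity of $\mu$ forces equality.

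The main obstacle is the surjectivity/rank half: one must be careful that restricting a $V^1\otimes V^2$-intertwining operator to the various $V^1$- and $V^2$-slices genuinely reconstructs it, which is where the finiteness hypothesis $N_{W^1,W^2}^{W^3}<\infty$ or $N_{N^1,N^2}^{N^3}<\infty$ is essential — without it one only gets an injection and not an equality of (possibly infinite) dimensions. Concretely, I would phrase the upper bound as follows: view $W^1\otimes N^1$ as a $V^1$-module (ignoring the $V^2$-action), apply Proposition~\ref{adl1} with $U=V^1$ acting, to bound $N_{W^1\otimes N^1, W^2\otimes N^2}^{W^3\otimes N^3}$ by a sum of products of $V^1$- and $V^2$-fusion rules over the irreducible constituents; since $W^i, N^i$ are irreducible this sum collapses to the single product $N_{W^1,W^2}^{W^3}N_{N^1,N^2}^{N^3}$. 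Matching this upper bound against the lower bound coming from the injective map $\mu$ completes the proof. I would keep the write-up short, citing \cite{FHL} for the tensor-product module structure and the explicit $\delta$-function manipulations, and citing \cite{ADL} (i.e.\ Proposition~\ref{adl1}) for the restriction bound, rather than redoing the Jacobi-identity verification in full detail.
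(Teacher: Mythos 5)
The paper offers no proof of this Proposition at all --- it is quoted from \cite{ADL} with the single sentence ``The following two propositions can be found in \cite{ADL}'' --- so there is no internal argument to compare yours against; your attempt has to stand on its own. The easy half of your argument is essentially sound: the map $\mu$ sending $\mathcal{Y}^{(1)}\otimes\mathcal{Y}^{(2)}$ to the operator $u\otimes a\mapsto \mathcal{Y}^{(1)}(u,z)\otimes\mathcal{Y}^{(2)}(a,z)$ is a well-defined intertwining operator, and it is injective, giving $N_{W^1\otimes N^1,W^2\otimes N^2}^{W^3\otimes N^3}\geq N_{W^1,W^2}^{W^3}N_{N^1,N^2}^{N^3}$. (Even here your injectivity sketch skips a real point: products of linearly independent formal series evaluated at the \emph{same} variable $z$ need not be linearly independent, e.g.\ $1\cdot z=z\cdot 1$; one must use that for homogeneous $u,w,w'$ each matrix coefficient $\langle w',\mathcal{Y}^{(1)}_r(u,z)w\rangle$ is a single monomial whose exponent does not depend on $r$, so that the relation reduces to a scalar identity in the leading coefficients.)

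The genuine gap is in the reverse inequality, which is where all the content and the finiteness hypothesis live, and your proposed route through Proposition \ref{adl1} with $U=V^1\otimes\C\1$ does not work. First, Proposition \ref{adl1} requires $U$ to have the \emph{same Virasoro element} as the ambient algebra, and $V^1\otimes\C\1$ does not: the restriction of a $V^1\otimes V^2$-intertwining operator to $(W^1\otimes n^1)\times(W^2\otimes n^2)$ satisfies $\frac{d}{dz}\mathcal{Y}(w^1\otimes n^1,z)=\mathcal{Y}(L^{(1)}(-1)w^1\otimes n^1,z)+\mathcal{Y}(w^1\otimes L^{(2)}(-1)n^1,z)$, so it violates the $L(-1)$-derivative axiom for $V^1$ and is not even an element of the $V^1$-intertwining space to which you want to map. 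Second, even granting some restriction statement, Proposition \ref{adl1} sees no $V^2$-structure on the target: its conclusion would bound $N_{W^1\otimes N^1,W^2\otimes N^2}^{W^3\otimes N^3}$ by $\dim I_{V^1}\Itw{W^3\otimes N^3}{W^1}{W^2}$ with $W^3\otimes N^3$ regarded as a $V^1$-module, i.e.\ as a direct sum of $\dim N^3$ copies of $W^3$, so the bound is $(\dim N^3)\cdot N_{W^1,W^2}^{W^3}$, generically infinite; the factor $N_{N^1,N^2}^{N^3}$ cannot appear in any estimate produced this way, and your asserted ``collapse to the single product'' is unsupported. The correct proof of surjectivity (as in \cite{ADL}) is a direct two-variable decomposition of an arbitrary $\mathcal{Y}$ into $\sum_i\mathcal{Y}^{(1)}_i\otimes\mathcal{Y}^{(2)}_i$ using the assumed finite-dimensionality of one of the two intertwining spaces; that argument is the substance of the proposition and is the piece missing from your write-up.
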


Let $W^1$ and $W^2$ be two $V$-modules. A tensor product for the
ordered pair $(W^1, W^2)$ is a pair $(W, F(\cdot, z))$, which consists of
a $V$-module $W$ and an intertwining operator $F(\cdot, z)$ of type
$\Itw{W}{W^1}{W^2}$, such that the following universal property
holds: For any $V$-module $M$ and any intertwining operator $I(\cdot,
z)$ of type $\Itw{M}{W^1}{W^2}$, there exists a unique $V$-homomorphism $\phi$
from $W$ to $M$ such that $I(\cdot, z)=\phi\circ F(\cdot, z)$.
It is clear from the definition that if a tensor product of  $W^1$ and $W^2$ exsits, it is unique
up to isomorphism. In this case, we denote the tensor product by $W^1\boxtimes W^2.$

The following results are obtained in \cite{HL1}, \cite{HL2}, \cite{HL3} and \cite{H1}, \cite{H2}, \cite{H3}.
\begin{thm}\label{H} Let $V$ be vertex operator algebra satisfying conditions (V1)-(V3).

1. The tensor product of any two $V$-modules $M\boxtimes N$ exits. In particular,
$M^i\boxtimes M^j$ of $M^i$ and $M^j$ exists and
is equal to $\sum_{k}N_{i,j}^kM^k$ for any $i,j\in\{0,...,p\}.$

2. The $V$-module category $\CC_V$ is a fusion category.
\end{thm}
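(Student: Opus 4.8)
The plan is to recall how both assertions are assembled from the Huang--Lepowsky--Huang tensor-category program for rational, $C_2$-cofinite vertex operator algebras of CFT type; the argument is thus a matter of quoting and organizing that machinery rather than recomputing it. For part 1 I would (i) produce $M\boxtimes N$ through its universal property by exhibiting a concrete representing module, (ii) isolate the analytic input coming from (V1)--(V3) that guarantees this module lies in $\CC_V$, and (iii) combine rationality with the definition of the fusion rules to read off the decomposition of $M^i\boxtimes M^j$. For part 2 I would verify the fusion-category axioms one at a time, reserving rigidity --- the genuinely deep point --- for last.

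\emph{Existence of the tensor product.} Following Huang--Lepowsky \cite{HL1,HL2,HL3}, fix a nonzero $z\in\C$ and call a linear map $F\colon W^1\otimes W^2\to\ol{W^3}$, with $\ol{W^3}$ a suitable completion of $W^3$, a $P(z)$-intertwining map of type $\Itw{W^3}{W^1}{W^2}$ if it satisfies the Jacobi-type identity obtained from the definition of an intertwining operator by evaluating the formal variable at $z$. The assignment $W^3\mapsto\{P(z)\text{-intertwining maps of that type}\}$ is a functor on $\CC_V$ naturally isomorphic to $W^3\mapsto I_V\Itw{W^3}{W^1}{W^2}$, and $M\boxtimes N$ is by definition an object of $\CC_V$ representing it. To build a representing object one realizes a candidate inside $(W^1\otimes W^2)^*$ as the union of the lowest-weight subspaces of the subspace cut out by compatibility with the $V$-action, and then shows this candidate is an honest ordinary $V$-module with the required grading-restriction and $C_1$-cofiniteness properties. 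The crucial input is that $V$ is $C_2$-cofinite and rational: this makes the spaces of genus-zero conformal blocks finite dimensional and forces matrix coefficients of products and iterates of intertwining operators to satisfy ordinary differential equations with regular singular points, yielding their convergence, analytic continuation, and the ``convergence and extension property.'' Granting these, the candidate represents the functor, so $M\boxtimes N$ exists in $\CC_V$; in particular $M^i\boxtimes M^j$ exists.

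\emph{The decomposition.} Since $V$ is rational, $M^i\boxtimes M^j$ is a finite direct sum of irreducibles, say $\bigoplus_k(M^k)^{\oplus m_k}$ with each $m_k<\infty$ (finiteness of fusion rules being \cite{ABD}). The universal property identifies $\Hom_V(M^i\boxtimes M^j,M^k)$ with the space of $P(z)$-intertwining maps of type $\Itw{M^k}{M^i}{M^j}$, hence with $I_V\Itw{M^k}{M^i}{M^j}$, which has dimension $N_{i,j}^k$; by Schur's lemma and semisimplicity the same $\Hom$-space has dimension $m_k$. Therefore $m_k=N_{i,j}^k$ and $M^i\boxtimes M^j=\sum_k N_{i,j}^k M^k$.

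\emph{Fusion-category structure and the main obstacle.} Semisimplicity of $\CC_V$ and finiteness of its set of simple objects are rationality together with \cite{DLM1,DLM2}; morphism spaces are finite dimensional because every object is an ordinary module graded by finite-dimensional $L(0)$-eigenspaces, so $\Hom$ between irreducibles is at most one dimensional and the general case follows by semisimplicity. The monoidal unit is $V=M^0$, with unit isomorphisms $V\boxtimes M\cong M\cong M\boxtimes V$ furnished by the vertex-operator intertwining operators together with condition (V3). The associativity isomorphism $(M_1\boxtimes M_2)\boxtimes M_3\cong M_1\boxtimes(M_2\boxtimes M_3)$ is Huang's associativity theorem \cite{H1}: by the differential-equations argument above, products and iterates of two intertwining operators span, after analytic continuation, the same space of multivalued analytic functions, and the induced comparison is the associativity constraint; the pentagon and triangle axioms come from the uniqueness built into the same convergence-and-extension machinery. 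The last and hardest point is rigidity: the dual of $M$ is the contragredient $M'$, and the evaluation $M'\boxtimes M\to V$ and coevaluation $V\to M\boxtimes M'$ are assembled from distinguished intertwining operators, after which one must verify the two rigidity triangles, i.e.\ that the relevant composites are nonzero scalars. Huang establishes this through the nondegeneracy of a natural pairing on spaces of intertwining operators, which ultimately rests on the modular-invariance/Verlinde analysis \cite{H2,H3}; for $V$ satisfying (V1)--(V3) this yields genuine two-sided duals, and together with the braiding coming from skew-symmetry one in fact obtains a modular tensor category, of which we need only that $\CC_V$ is a fusion category. The step I expect to be the true obstacle is exactly this rigidity argument --- extracting nondegeneracy of the intertwining-operator pairing from the modular data --- together with the convergence-and-extension property underpinning the entire construction in part 1.
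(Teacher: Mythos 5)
The paper offers no proof of this theorem; it simply quotes it from \cite{HL1}--\cite{HL3} and \cite{H1}--\cite{H3}, and your proposal is an accurate outline of exactly that cited machinery (the $P(z)$-intertwining-map representing object, convergence via regular-singular-point ODEs from $C_2$-cofiniteness, the Schur-lemma count giving $N_{i,j}^k$, and rigidity via the modular-invariance analysis). So your account is correct and consistent with the paper's treatment.
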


The modular transformation of trace functions of irreducible modules of vertex operator algebra \cite{Z} is another important ingredient in this paper. It is defined another
 VOA structure $(V,Y[\cdot,z],\1,\omega-c/24)$ on $V$ in \cite{Z} with grading
 $$V=\bigoplus_{n\geq 0}V_{[n]}.$$
 For $v\in  V_{[n]}$ we write $wt [v]=n.$ We denote $v_{n-1}$ by $o(v)$ for $v\in V_n$
 and  extend to $V$ linearly. For $i=0,...,p$ and $v\in V,$ we set
 $$Z_i(v,q)=\tr_{M^i}o(v)q^{L(0)-c/24}=\sum_{n\geq 0}(\tr_{M^i_{\lambda_i+n}}o(v))q^{\lambda_i+n-c/24}$$
 which is a formal power series in variable $q.$ The constant $c$ here is the central charge of $V.$
 The $Z_i(\1,q)$ which is denoted by $\ch_qM^i$ sometimes is called the $q$-character of $M^i.$ The $Z_i(v, q)$ converges to a holomorphic function
 in $0<|q|<1$ \cite{Z}. Let $\H=\{\tau\in \C \mid im\,\tau>0\}$ be the upper half complex plane  and
 $q=e^{2\pi i\tau}$ with $\tau\in \H.$ Denote by the  $Z_i(v,\tau)$ the holomorphic function $Z_i(v,q)$
 on $\H.$

 Note that the modular group $SL_2{(\mathbb{Z})}$ acts on $\H$ in an obvious way.
\begin{thm}\label{2.3} Let $V$ be a vertex operator algebra satisfying (V1)-(V3).

(1) There is  a group homomorphism $\rho: SL_2(\Z)\to GL_{p+1}(\C)$ with $\rho(\gamma)=(\gamma_{ij})$
such that for any $0\leq i\leq p$ and $v\in V_{[n]}$
$$Z_i(v,\gamma\tau)=({c\tau+d})^n\sum_{j=0}^p\gamma_{ij}Z_j(v,\tau).$$

(2) Each $Z_i(v,\gamma\tau)$ is a modular form of weight $n$ over a congruence subgroup $\Gamma(m)$ for some $m\geq 1.$
 \end{thm}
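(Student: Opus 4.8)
The plan is to follow Zhu's analysis of genus-one correlation functions, together with the known rationality of the conformal weights and central charge and the Galois symmetry of the $S$-matrix. For part~(1), I would first establish Zhu's recursion formula for torus one-point functions in the square-bracket vertex operator algebra $(V,Y[\cdot,z],\1,\omega-c/24)$: using the commutator and associativity relations of $V$ one expresses $\tr_{M^i}o(v[-1]u)q^{L(0)-c/24}$ as $\tr_{M^i}o(v)o(u)q^{L(0)-c/24}$ plus a finite sum of terms $G_{2k}(\tau)\,\tr_{M^i}o(v[2k-1]u)q^{L(0)-c/24}$, with $G_{2k}$ the normalized Eisenstein series. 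Let $\mathcal{C}_1(V)$ be the space of functions $F(v,\tau)$ that are holomorphic in $\tau\in\H$, linear in $v$, and satisfy Zhu's recursion relations; each graded trace $Z_i(\cdot,\tau)$ belongs to $\mathcal{C}_1(V)$. Since $\dim V/C_2(V)<\infty$, the Virasoro vector satisfies a relation modulo $C_2(V)$, and feeding it through the recursion shows that the characters $F(\1,\tau)$, $F\in\mathcal{C}_1(V)$, all satisfy one fixed modular-invariant linear differential equation $$\sum_{j=0}^{m}g_j(\tau)\Bigl(q\tfrac{d}{dq}\Bigr)^{j}F(\1,\tau)=0$$ with the $g_j$ holomorphic modular forms, and more generally that $\mathcal{C}_1(V)$ is finite dimensional. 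Using the transformation law of the Eisenstein series one checks that $SL_2(\Z)$ acts on $\mathcal{C}_1(V)$ by the modular action $(F|_\gamma)(v,\tau)=(c\tau+d)^{-\wt[v]}F(v,\gamma\tau)$. Finally, rationality and Theorem~\ref{H} give that the finitely many graded trace functions $Z_0(\cdot,\tau),\dots,Z_p(\cdot,\tau)$ span $\mathcal{C}_1(V)$ and are linearly independent, so they form a basis; letting $\rho(\gamma)$ be the matrix of $\gamma$ acting on $\mathcal{C}_1(V)$ in this basis then yields at once the transformation law, the homomorphism property, and the independence of the entries $\gamma_{ij}$ from $v$.

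For part~(2), I would start from the fact recalled above (from \cite{DLM3}) that every conformal weight $\lambda_i$ and the central charge $c$ are rational. Reading off the $q$-expansion at the cusp $\infty$, the matrix $\rho(T)$ is diagonal with entries $e^{2\pi i(\lambda_i-c/24)}$, hence of finite order, say $N$; one then shows that $\rho$ has all of its matrix entries in the cyclotomic field $\Q(\zeta_N)$. The decisive input is the arithmetic of $\rho(S)$: for each $\sigma\in\mathrm{Gal}(\Q(\zeta_N)/\Q)$ the matrix $\sigma(\rho(S))$ differs from $\rho(S)$ only by a signed permutation matrix compatible with $\rho(T)$. Combining this Galois symmetry with the relations $S^2=(ST)^3$ and $S^4=1$ in $SL_2(\Z)$ forces $\rho$ to factor through $SL_2(\Z/N\Z)$, i.e.\ $\Gamma(N)\subseteq\ker\rho$. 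Hence each $Z_i(v,\tau)$ is invariant under $\Gamma(m)$, $m=N$, up to the automorphy factor $(c\tau+d)^{n}$ alone, which together with its holomorphy and moderate growth at the cusps is precisely the assertion that $Z_i(v,\tau)$ is a modular form of weight $n$ on $\Gamma(m)$.

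The main obstacle is the technical core of part~(1): making Zhu's reduction and recursion identities rigorous---the bookkeeping with the Weierstrass and Eisenstein functions and the passage to the square-bracket algebra---and then extracting from $C_2$-cofiniteness and rationality both the finite-dimensionality of $\mathcal{C}_1(V)$ and the spanning property of $\{Z_0,\dots,Z_p\}$. In part~(2) the genuinely deep point is the congruence-subgroup property, which rests on the Galois symmetry of $\rho(S)$; this is the theorem of Dong--Lin--Ng, building on work of Anderson--Moore and de~Boer--Goeree, and I would invoke it rather than reprove it.
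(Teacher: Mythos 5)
Your proposal is correct and matches the paper's treatment: the paper offers no proof of its own, simply attributing part (1) to Zhu \cite{Z} and part (2) to Dong--Lin--Ng \cite{DLN}, and your sketch faithfully reproduces the structure of those two arguments (Zhu's recursion, the $C_2$-cofiniteness differential equation, and the modular action on the space of one-point functions for (1); rationality of the $\lambda_i$ and $c$, finite order of $\rho(T)$, and the Galois symmetry of $\rho(S)$ for the congruence property in (2)). Since you explicitly invoke \cite{DLN} rather than reprove it, your route is essentially the same as the paper's.
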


Part (1) of the Theorem was obtained in \cite{Z} and Part (2) was established in \cite{DLN}.
The
$$S=\rho_V\left(\mtx{0 & -1\\ 1 & 0}\right) \text{ and }T=\rho_V\left(\mtx{1 & 1\\ 0 & 1}\right).
$$
are respectively called the \emph{genus one} $S$ and $T$-matrices of $V$.

Finally we can define the quantum dimension. Let $V$ be as before and $M$ be a $V$-module. Then $M=\sum_{i=0}^pM^i$ is a direct sum of finitely many irreducible $V$-modules.  Then both
$Z_V(\tau)=Z_0(\1,\tau)$ and $Z_M(\tau)=Z_M(1,\tau)$ exist.
The quantum dimension of $M$ over $V$ is defined as
$$
\qdim_{V}M=\lim_{y\to 0}\frac{Z_M(iy)}{Z_V(iy)}=\lim_{q\to 1}\frac{\ch_qM}{\ch_qV}$$
where $y$ is real and positive and $\ch_qM$ is the $q$-character of $M$.

 The following result was given in \cite{DJX1}.
\begin{thm}\label{DJX1}
Let $V$ be a vertex operator algebra satisfying (V1)-(V3).

(1) $\qdim_VM^i=\frac{S_{i0}}{S_{00}}$ exits and is greater than or equal to $1$ for all $i$
where $S=(S_{ij}).$

(2) $\qdim_VM^i$ is the maximal eigenvalue of the fusion matrix $N(i)=(N_{ij}^k)_{jk}.$

(3) $\qdim_VM^i\boxtimes M^j=\qdim_VM^i\cdot \qdim_VM^j$ for all $i,j.$

(4) $M^i$ is a simple current if and only if $\qdim_VM^i=1.$
\end{thm}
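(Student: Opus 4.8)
The plan is to deduce all four statements from two inputs that are already at our disposal: the modular $S$-transformation of the graded traces (Theorem~\ref{2.3}) and the Verlinde formula, the latter being in force since $\CC_V$ is a modular tensor category by Theorem~\ref{H}. I would begin with (1). First I would compare the characters at the two cusps $\tau\to i\infty$ and $\tau\to 0$: writing $\tau=iy$ with $y>0$ real and using $-1/(iy)=i/y$, Theorem~\ref{2.3}(1) applied to $v=\1$ (of weight $0$) gives
$$Z_i(\1,iy)=\sum_{j=0}^p S_{ij}\,Z_j(\1,i/y).$$
As $y\to 0^+$ one has $q'=e^{-2\pi/y}\to 0^+$ and $Z_j(\1,i/y)=(q')^{\lambda_j-c/24}\bigl(\dim M^j_{\lambda_j}+O(q')\bigr)$ with $\dim M^j_{\lambda_j}>0$, the needed $q$-expansions and their convergence being supplied by \cite{Z}. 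By (V3) one has $\lambda_0=0<\lambda_j$ for $j\neq 0$, so among the exponents $\lambda_j-c/24$ the minimum $-c/24$ is attained only at $j=0$; since $\dim M^0_{\lambda_0}=\dim V_0=1$ this yields $Z_i(\1,iy)\sim S_{i0}(q')^{-c/24}$ and in particular $Z_0(\1,iy)\sim S_{00}(q')^{-c/24}$ as $y\to 0^+$. But $Z_0(\1,iy)=\ch_qV\to+\infty$ as $q\to 1^-$ (the degenerate case $V=\C$ being trivial), which forces $c>0$ and $S_{00}>0$; hence $\qdim_VM^i=\lim_{y\to 0^+}Z_i(\1,iy)/Z_0(\1,iy)=S_{i0}/S_{00}$ exists, and being a limit of positive reals it is $\ge 0$.

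Next I would bring in the Verlinde formula $N_{ij}^k=\sum_l S_{il}S_{jl}\overline{S_{kl}}/S_{0l}$ (a consequence of \cite{H2},\cite{H3}), equivalently $N(i)=S\,\mathrm{diag}(S_{il}/S_{0l})_l\,S^{-1}$, with $S$ symmetric and unitary. Combining it with Theorem~\ref{H}(1) and part (1),
$$\qdim_V(M^i\boxtimes M^j)=\sum_k N_{ij}^k\,\qdim_VM^k=\frac1{S_{00}}\sum_k N_{ij}^kS_{k0}=\frac{S_{i0}S_{j0}}{S_{00}^2}=\qdim_VM^i\cdot\qdim_VM^j,$$
which is (3); alternatively, once (2) is known one has $\qdim_VM^i=\FPdim(M^i)$ and (3) follows from $\FPdim$ being a ring homomorphism (Theorem~\ref{ENO}). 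The contragredient $M^{i'}=(M^i)'$ has the same graded dimensions as $M^i$, so $\qdim_VM^{i'}=\qdim_VM^i$; and rigidity provides the evaluation $M^i\boxtimes M^{i'}\to V$, so $N_{i,i'}^0\ge 1$. Therefore
$$(\qdim_VM^i)^2=\qdim_V(M^i\boxtimes M^{i'})=\sum_k N_{i,i'}^k\,\qdim_VM^k\ \ge\ N_{i,i'}^0\cdot\qdim_VV\ \ge\ 1,$$
using $\qdim_VM^k\ge 0$ from part (1); hence $\qdim_VM^i\ge 1$, finishing (1), and moreover $S_{0l}=S_{l0}\ge S_{00}>0$ for every $l$.

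For (2): by the Verlinde formula the eigenvalues of $N(i)$ are exactly the numbers $S_{il}/S_{0l}$, and a short computation with the unitarity of $S$ shows that the vector $(S_{0l})_l$ is an eigenvector of $N(i)$ with eigenvalue $S_{i0}/S_{00}=\qdim_VM^i$; by the previous step this vector is strictly positive. A nonnegative matrix — here $N(i)=(N_{ij}^k)_{jk}$, with entries in $\Z_{\ge 0}$ — that possesses a strictly positive eigenvector attains its spectral radius at that eigenvalue, so $\qdim_VM^i$ is the maximal eigenvalue of $N(i)$. For (4): if $M^i$ is a simple current then $M^i\boxtimes M^{i'}$ is irreducible and contains $V$, hence equals $V$, so $(\qdim_VM^i)^2=\qdim_VV=1$ and $\qdim_VM^i=1$. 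Conversely, if $\qdim_VM^i=1$, then in the last display every summand has $\qdim_VM^k\ge 1$, which with $N_{i,i'}^0\ge 1$ forces $N_{i,i'}^0=1$ and $N_{i,i'}^k=0$ for $k\ne 0$, i.e. $M^i\boxtimes M^{i'}\cong V$; then for any $j$, $M^{i'}\boxtimes(M^i\boxtimes M^j)\cong M^j$ is irreducible, and comparing numbers of irreducible constituents on the two sides (none is lost, since every $\qdim_V$ here is positive) shows $M^i\boxtimes M^j$ is irreducible. As $j$ was arbitrary, $M^i$ is a simple current.

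The delicate point is the cusp comparison in part (1): justifying the term-by-term asymptotics of the $Z_j(\1,\tau)$ as $\tau\to 0$ rests on the convergence results of \cite{Z} (and, in the broader setup, on Theorem~\ref{2.3}(2)). Once that is granted, everything else is formal, the only other deep ingredient being the Verlinde formula of \cite{H2},\cite{H3}.
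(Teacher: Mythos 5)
The paper does not prove this theorem; it is quoted verbatim from \cite{DJX1}, and your argument is essentially a faithful reconstruction of the proof given there (cusp comparison via the modular $S$-transformation for the existence of the limit, Huang's Verlinde formula for the product rule and the eigenvector identification, rigidity for the lower bound $\ge 1$ and the simple-current criterion). The one soft spot is your claim that $Z_0(\1,iy)\to+\infty$ by itself forces $S_{00}>0$: if $S_{00}=0$ the divergence could a priori be carried by a term $S_{0j}(q')^{\lambda_j-c/24}$ with $0<\lambda_j<c/24$, so this implication is not quite airtight as stated. It is, however, easily repaired with an input you already use: Huang's Verlinde formula \cite{H2,H3} has $S_{0l}$ in the denominator and comes with $S_{0l}\neq 0$ for all $l$, and combining $S_{00}\neq 0$ with the sign information $\lim_{y\to 0}(q')^{c/24}Z_0(\1,iy)=S_{00}\ge 0$ from your cusp expansion gives $S_{00}>0$; the rest of your argument then goes through unchanged.
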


By Theorems \ref{ENO}, \ref{H} and \ref{DJX1}, we see the relation between the quantum dimension and the Frobenius-Perron dimension: $\qdim_VM^i=\FPdim_{\cc_V}(M^i)$ for all $i.$ We also define the global dimension
$${\rm glob}(V)=\sum_{i=0}^p(\qdim_VM^i)^2.$$
It is clear that ${\rm glob}(V)=\FPdim(\cc_V).$

An extension $U$ of $V$ is a simple vertex operator algebra containing $V.$ Then $U$ is again $C_2$-cofinite \cite{ABD}. Here we quote a recent result from \cite{HKL}.
\begin{thm}\label{HKL} Let $V$ be a vertex operator algebra satisfying (V1)-(V3).

(1) If $U$ is an extension vertex operator algebra of $V$, then
$U$ induces an \'{e}tale algebra $A_{U}$ in $\cc_V$ such that $A_U$ is isomorphic to $ U$ as $V$-module.

(2) If $U$ is a $V$-module having integral conformal weight and $U$ is a commutative algebra  in $\cc_V$, then $U$ has a vertex operator algebra structure such that $U$ is an extension vertex operator algebra of $V$.

(3) $U$ is rational.
\end{thm}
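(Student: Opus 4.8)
My approach is to pass back and forth between the vertex-operator data of an extension $U$ and the algebraic data of a commutative algebra object of the braided fusion category $\cc_V$, using the vertex tensor category structure of Theorem~\ref{H} and the standard dictionary by which an intertwining operator of type $\Itw{W^3}{W^1}{W^2}$ is the same thing as a morphism $W^1\boxtimes W^2\to W^3$ in $\cc_V$. For part (1), I would view $U$ as a $V$-module by restriction, call it $A_U$; since $U$ is simple of CFT type and carries the conformal vector of $V$, it is a finite direct sum of irreducible $V$-modules and $\dim\Hom_{\cc_V}(\1_{\cc_V},A_U)=\dim\Hom_V(V,U)=1$, which is the connectedness condition. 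The map $Y_U(\cdot,z)$ restricted to $A_U\times A_U$ satisfies the axioms of an intertwining operator of type $\Itw{A_U}{A_U}{A_U}$ --- axiom (ii) is the $U$-Jacobi identity with one argument taken in $V$, and (i), (iii) are immediate --- so it induces a morphism $\mu\colon A_U\boxtimes A_U\to A_U$. One then checks that the creation property together with $Y_U(\1,z)=\id_U$ gives the unit morphism (the inclusion $V\hookrightarrow A_U$) and the unit axioms; that associativity of products of vertex operators in $U$ gives associativity of $\mu$; and that the skew-symmetry $Y_U(u,z)v=e^{zL(-1)}Y_U(v,-z)u$ translates precisely into $\mu\circ c_{A_U,A_U}=\mu$, i.e.\ commutativity of $\mu$ with respect to the braiding $c$ of $\cc_V$. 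To conclude that $A_U$ is \'{e}tale one must show that the module category $\cc_{A_U}$ formed inside $\cc_V$ is semisimple; here one uses that $\cc_V$ is in fact a modular tensor category (Huang's rigidity-and-modularity theorem, which underlies Theorem~\ref{H}), that $U$ is $C_2$-cofinite by \cite{ABD}, and that integrality of the conformal weights of $A_U$ forces the ribbon twist $\theta_{A_U}$ to be trivial; the Kirillov--Ostrik analysis of commutative algebras with trivial twist in a modular category then yields separability of $A_U$, hence semisimplicity of $\cc_{A_U}$. This separability step is the main obstacle.

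For part (2) the dictionary is run in reverse: the multiplication $\mu\colon U\boxtimes U\to U$ corresponds to an intertwining operator $\mathcal{Y}(\cdot,z)$ of type $\Itw{U}{U}{U}$, and the hypothesis that the conformal weights of $U$ as a $V$-module are integers forces $\mathcal{Y}$ to involve only integral powers of $z$, so that $Y_U(u,z):=\mathcal{Y}(u,z)$ is a well-defined map $U\to(\End U)[[z,z^{-1}]]$. One retains $\1$ and $\omega$ of $V$ as the vacuum and conformal vector of $U$; the unit axioms of the algebra $(U,\mu)$ give $Y_U(\1,z)=\id_U$, the creation property, and compatibility of $Y_U$ with the $V$-action; axiom (iii) for intertwining operators gives the $L(-1)$-derivative property; and, crucially, commutativity of $\mu$ relative to the braiding yields mutual locality of the fields $Y_U(u,z)$, while associativity of $\mu$ yields the iterate (``associativity'') property. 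Mutual locality, the iterate property, and the already-known $V$-module Jacobi identity then force the full Jacobi identity for $Y_U$ by the duality-implies-Jacobi argument of \cite{FHL}, so that $(U,Y_U,\1,\omega)$ is a vertex operator algebra containing $V$ with the same conformal vector, that is, an extension.

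For part (3), by part (1) the algebra $A_U$ is connected and \'{e}tale, so $\cc_{A_U}$ is semisimple, and hence so is its full subcategory $\cc_{A_U}^0$ of dyslectic modules, the latter being closed under subquotients in $\cc_{A_U}$. Applying the correspondence of parts (1)--(2) at the level of modules identifies the category of ordinary $U$-modules with $\cc_{A_U}^0$, so every $U$-module is completely reducible, i.e.\ $U$ is rational; together with the $C_2$-cofiniteness of $U$ already recorded via \cite{ABD}, this places $U$ under the same basic hypotheses as $V$. Apart from the separability step flagged in part (1), the other delicate point is the verification in part (2) that commutativity and associativity of $\mu$ genuinely upgrade to mutual locality and the iterate property for the operators $Y_U(u,z)$ --- this is exactly where integrality of the conformal weights is indispensable, since without it one would obtain operators with branch cuts rather than mutually local fields obeying the Jacobi identity.
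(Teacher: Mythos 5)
The paper itself offers no proof of this theorem: it is quoted from \cite{HKL}, with the semisimplicity/rationality input ultimately resting on Kirillov--Ostrik \cite{KO}. Your sketch is a correct reconstruction of exactly that argument --- the dictionary between intertwining operators of type $\Itw{W^3}{W^1}{W^2}$ and morphisms $W^1\boxtimes W^2\to W^3$, skew-symmetry of $Y_U$ translating into commutativity with respect to the braiding, integrality of conformal weights translating into triviality of the twist $\theta_{A_U}$, and the Kirillov--Ostrik separability theorem giving the \'{e}tale property and hence semisimplicity of the local module category. The only points worth flagging are that the Kirillov--Ostrik step also requires $\dim_{\cc_V}A_U\neq 0$, which here is guaranteed by the positivity of quantum dimensions in Theorem \ref{DJX1}(1), and that in part (3) one must first reduce an arbitrary admissible $U$-module to an object of $\cc_V$ by restricting to $V$ and invoking the rationality of $V$ before the equivalence with $\cc_{A_U}^0$ applies; both are routine and are handled in \cite{HKL}.
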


\begin{thm}\label{DMNO1} Let $V$ be a vertex operator algebra satisfying (V1)-(V3) and simple vertex operator algebra $U$ be an extension of $V.$ Then $U$ also satisfies (V1)-(V3) and
$${{\rm glob}}(V)={{\rm glob}}(U)(\qdim_VU)^2.$$
\end{thm}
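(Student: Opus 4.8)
The plan is to deduce Theorem~\ref{DMNO1} by translating the vertex-operator-algebra statement into the language of fusion categories and then invoking Theorem~\ref{DMNO}. First I would verify that $U$ satisfies (V1)--(V3). Since $U$ is simple and contains $V$ with the same Virasoro element, $U$ inherits $C_2$-cofiniteness from $V$ by \cite{ABD} (already noted above), and rationality of $U$ follows from Theorem~\ref{HKL}(3). The CFT type assertion and the condition on conformal weights (V3) follow from the fact that $U=\oplus_{n\ge 0}U_n$ with $U_0=\C\1$: indeed $U_0$ lies in the degree-zero part of $V$ as a $V$-module, and the conformal weight of $U$ as a $V$-module is $0$ only on the vacuum component, while any other irreducible $V$-submodule of $U$ has strictly positive conformal weight by (V3) applied to $V$. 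So $U$ is again a vertex operator algebra of the type to which all the earlier machinery applies.

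Next I would set up the categorical picture. By Theorem~\ref{HKL}(1), the extension $U$ gives a connected \'etale algebra $A_U$ in the fusion category $\cc_V$, with $A_U\cong U$ as a $V$-module; connectedness ($\dim\Hom_{\cc_V}(\1,A_U)=1$) is exactly the statement that the multiplicity of $V$ inside $U$ is one, which holds because $U$ is simple of CFT type. The key identification is then $\FPdim_{\cc_V}(A_U)=\qdim_V U$: this is immediate from the remark following Theorem~\ref{DJX1}, namely $\qdim_V M^i=\FPdim_{\cc_V}(M^i)$ for every irreducible $V$-module $M^i$, extended additively to the (finite, semisimple) decomposition of $U$ into irreducible $V$-modules, since $\FPdim$ is a ring homomorphism on the Grothendieck ring and $\qdim_V$ is additive on direct sums by definition of the $q$-character limit. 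The other ingredient is that the category of $A_U$-modules in $\cc_V$ is equivalent, as a fusion category, to $\cc_U$ — the module category of the extended VOA $U$; this is part of the content of \cite{HKL} together with Theorem~\ref{H} applied to $U$, so $\FPdim(\cc_{A_U})=\FPdim(\cc_U)={\rm glob}(U)$, and likewise $\FPdim(\cc_V)={\rm glob}(V)$ by the remark after Theorem~\ref{DJX1}.

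With these translations in hand, the result is a direct application of the first equality in Theorem~\ref{DMNO} to the connected \'etale algebra $A=A_U$ in $\cc=\cc_V$:
$$(\FPdim_{\cc_V}A_U)\,\FPdim(\cc_V)_{A_U}=\FPdim(\cc_V).$$
Rewriting each factor via the dictionary above gives $(\qdim_V U)\cdot {\rm glob}(U)={\rm glob}(V)$, which is the claimed formula ${\rm glob}(V)={\rm glob}(U)(\qdim_V U)^2$ — wait, note that Theorem~\ref{DMNO} as stated yields $(\FPdim_\cc A)\FPdim\cc_A=\FPdim\cc$, so I must be careful: the correct reading is that $\FPdim(\cc_V)_{A_U}$ here should be matched not with ${\rm glob}(U)$ directly but with ${\rm glob}(U)$ rescaled, or equivalently one should use the relation $\FPdim(\cc_A)=\FPdim(\cc)/\FPdim_\cc A$ twice. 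The cleanest route is: $\FPdim(\cc_U)=\FPdim((\cc_V)_{A_U})=\FPdim(\cc_V)/\FPdim_{\cc_V}(A_U)$ by Theorem~\ref{DMNO}, and separately the global dimension of $U$ computed \emph{intrinsically} relates to that of $V$ by the standard induction/restriction adjunction, giving the extra factor of $\qdim_V U$. I expect the main obstacle to be precisely this bookkeeping: making rigorous that the ``intrinsic'' Frobenius--Perron dimension ${\rm glob}(U)=\FPdim(\cc_U)$ agrees with $\FPdim((\cc_V)_{A_U})$ computed inside $\cc_V$, and tracking the single versus double power of $\qdim_V U$ — this is where one must invoke the full strength of the $\cc_U\simeq(\cc_V)_{A_U}$ equivalence from \cite{HKL} (and the compatibility of $\FPdim$ with it) rather than just the numerical identity of Theorem~\ref{DMNO}. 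Everything else is formal.
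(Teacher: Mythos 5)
Your verification that $U$ satisfies (V1)--(V3) is fine, and your dictionary ($\FPdim_{\cc_V}(A_U)=\qdim_V U$, ${\rm glob}(V)=\FPdim(\cc_V)$) is the right one. The genuine gap is in the final step: you identify the module category of the extended vertex operator algebra $U$ with $(\cc_V)_{A_U}$, the category of \emph{all} $A_U$-modules in $\cc_V$, and accordingly reach for the first equality of Theorem \ref{DMNO}, $(\FPdim_\cc A)\,\FPdim\cc_A=\FPdim\cc$. That identification is wrong, and it is precisely the source of the stray factor of $\qdim_VU$ that you notice but never resolve. By Kirillov--Ostrik \cite{KO} (the citation the paper's proof leans on), $\cc_U$ is equivalent to the subcategory $\cc_{A_U}^0$ of \emph{dyslectic} (local) $A_U$-modules, not to all of $\cc_{A_U}$; the non-local $A_U$-modules do not arise from ordinary $U$-modules. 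Once this is corrected, the theorem is an immediate application of the \emph{second} displayed equality of Theorem \ref{DMNO}, $(\FPdim_\cc A)^2\,\FPdim\cc_A^0=\FPdim\cc$, which already carries the square: ${\rm glob}(V)=(\qdim_VU)^2\,{\rm glob}(U)$. Your fallback appeal to an ``induction/restriction adjunction'' supplying the extra factor is not a proof and is not needed.

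A concrete sanity check showing that the distinction between $\cc_A$ and $\cc_A^0$ cannot be finessed: for a lattice extension $V_L\subset V_{L'}$ one has ${\rm glob}(V_L)=|L^{\circ}/L|$, ${\rm glob}(V_{L'})=|(L')^{\circ}/L'|$ and $\qdim_{V_L}V_{L'}=|L'/L|$, and indeed $|L^{\circ}/L|=|L'/L|^2\,|(L')^{\circ}/L'|$, consistent only with the squared formula. The unsquared formula would force $\FPdim\cc_{A}=|L'/L|\,|(L')^{\circ}/L'|$, which counts strictly more simple objects than there are irreducible $V_{L'}$-modules --- the surplus being exactly the non-dyslectic $A$-modules your argument silently includes.
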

\begin{proof} The theorem is a combinations of \cite{KO}, Theorem \ref{DMNO}, \cite{DJX1} and \cite{HKL}. In this case,
let $\cc$ be the category of $V$-modules and $A=U$ an algebra in $\cc.$ Then $\cc_A^0$ is the $U$-module category by \cite{KO}.
\end{proof}

We now consider two vertex operator algebras $V^1,V^2$ satisfying conditions (V1)-(V3). Then it is easy to see that the tensor product vertex operator algebra $V^1\otimes V^2$ \cite{FHL} also satisfies assumptions (V1)-(V3).
\begin{lem}\label{qtensor} Let $M$ be a $V^1$-module and $N$ be a $V^2$-module. Then
$$\qdim_{V^1\otimes V^2}M\otimes N=\qdim_{V^1}M\cdot \qdim_{V^2}N,$$
$${\rm glob}(V^1\otimes V^2)={\rm glob}(V^1){\rm glob}(V^2).$$
\end{lem}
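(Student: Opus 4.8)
The plan is to reduce both identities to the definition of quantum dimension as a limit of character quotients together with the well-known fact that $q$-characters of tensor product modules factor. First I would recall that if $M = \bigoplus_{n\geq 0} M_{\lambda_1+n}$ as a $V^1$-module and $N = \bigoplus_{n\geq 0} N_{\lambda_2+n}$ as a $V^2$-module, then $M\otimes N$ is a $V^1\otimes V^2$-module whose Virasoro element is $\omega^1\otimes \1 + \1\otimes\omega^2$, so that $L(0)$ on $M\otimes N$ acts as $L^1(0)\otimes\id + \id\otimes L^2(0)$ and the central charge is $c_1+c_2$. Consequently
$$\ch_q(M\otimes N) = \tr_{M\otimes N} q^{L(0)-(c_1+c_2)/24} = \left(\tr_M q^{L^1(0)-c_1/24}\right)\left(\tr_N q^{L^2(0)-c_2/24}\right) = \ch_q M\cdot \ch_q N,$$
and likewise $\ch_q(V^1\otimes V^2) = \ch_q V^1\cdot \ch_q V^2$. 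This factorization is the technical heart of the argument, and it requires knowing that the grading on the tensor module really is the sum grading; this is exactly what is provided by \cite{FHL}, quoted just before Proposition \ref{adl2}.

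Next I would take the quotient and pass to the limit. By Theorem \ref{DJX1}(1), applied to $V^1$, $V^2$, and $V^1\otimes V^2$ (the latter satisfies (V1)-(V3) as noted just above the lemma), the limits $\lim_{q\to 1}\ch_q M/\ch_q V^1$, $\lim_{q\to 1}\ch_q N/\ch_q V^2$, and the corresponding limit for $M\otimes N$ all exist and are finite and positive. Writing $q=e^{-2\pi y}\to 1$ as $y\to 0^+$, we get
$$\qdim_{V^1\otimes V^2}(M\otimes N) = \lim_{q\to 1}\frac{\ch_q(M\otimes N)}{\ch_q(V^1\otimes V^2)} = \lim_{q\to 1}\frac{\ch_q M}{\ch_q V^1}\cdot\lim_{q\to 1}\frac{\ch_q N}{\ch_q V^2} = \qdim_{V^1}M\cdot\qdim_{V^2}N,$$
where splitting the limit of the product into the product of the limits is justified precisely because both factor limits exist. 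This proves the first identity; it suffices, strictly speaking, to check it for irreducible $M$ and $N$ and extend by additivity of characters, but the computation above works verbatim for arbitrary modules.

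Finally, for the global-dimension identity I would note that the irreducible $V^1\otimes V^2$-modules are exactly the $M^i\otimes N^j$ where $M^i$ ranges over the irreducible $V^1$-modules and $N^j$ over the irreducible $V^2$-modules (again \cite{FHL}, together with rationality). Using ${\rm glob}(V)=\sum_i(\qdim_V M^i)^2$ and the first identity,
$${\rm glob}(V^1\otimes V^2) = \sum_{i,j}\left(\qdim_{V^1\otimes V^2}(M^i\otimes N^j)\right)^2 = \sum_{i,j}(\qdim_{V^1}M^i)^2(\qdim_{V^2}N^j)^2 = {\rm glob}(V^1){\rm glob}(V^2).$$
The only genuine obstacle is the interchange of limit and product in the middle step; everything else is bookkeeping about how $L(0)$ and the list of irreducibles behave under tensor products of vertex operator algebras. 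Alternatively, one could bypass the limit entirely by invoking the categorical identification $\qdim = \FPdim$ (stated after Theorem \ref{DJX1}) and the fact that $\cc_{V^1\otimes V^2}\simeq \cc_{V^1}\boxtimes\cc_{V^2}$ as fusion categories, under which $\FPdim$ is multiplicative; I would mention this as a remark but carry out the character computation as the main proof since it is self-contained.
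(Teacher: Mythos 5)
Your proof is correct and follows essentially the same route as the paper: the paper likewise derives the first identity from the factorization $Z_{M\otimes N}(\tau)=Z_M(\tau)Z_N(\tau)$ and the second from the first together with the \cite{FHL} classification of irreducible $V^1\otimes V^2$-modules as $M^i\otimes N^j$. Your additional care about why the limit of the product splits (existence of each factor limit via Theorem \ref{DJX1}) is a sound filling-in of a step the paper leaves implicit.
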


\begin{proof} The equality $\qdim_{V^1\otimes V^2}M\otimes N=\qdim_{V^1}M\cdot \qdim_{V^2}N$ follows
from the fact that $Z_{M\otimes N}(\tau)=Z_M(\tau)Z_N(\tau)$ and  the equality  ${\rm glob}(V^1\otimes V^2)={\rm glob}(V^1){\rm glob}(V^2)$ follows from the first equality and the fact that the irreducible $V^1\otimes V^2$-modules are exactly
$M\otimes N$ where $M$ is an irreducible $V^1$-module and $N$ is an irreducible $V^2$-module \cite{FHL}.
\end{proof}

\section{Parafermion vertex operator algebras}

In this section we recall the parafermion vertex operator algebra $K(\g,k)$ and its representations
associated to any finite dimensional simple Lie algebra $\g$ and positive integer $k$ from \cite{DR}.

Let $\g$ be a finite dimensional simple Lie algebra  with a Cartan
subalgebra $\h.$ We denote the corresponding root system by $\Delta$
and the root lattice by $Q.$ Fix an invariant symmetric
nondegenerate bilinear form $\< ,\>$  on $\g$ such that $\<\a,\a\>=2$ if
$\alpha$ is a long root, where we have identified $\h$ with $\h^*$
via $\<,\>.$ We denote the image of $\alpha\in
\h^*$ in $\h$ by $t_\alpha.$ That is, $\alpha(h)=\<t_\alpha,h\>$
for any $h\in\h.$ Fix simple roots $\{\alpha_1,...,\alpha_l\}$
and let $\Delta_+$ be the set of corresponding positive roots.
Denote the highest root by $\theta.$

Recall that the weight lattice $P$ of $\g$ consists of $\lambda\in \h^*$ such that
$\frac{2\<\lambda,\alpha\>}{\<\alpha,\alpha\>}\in\Z$ for all $\alpha\in \Delta.$ It is well-known that
$P=\bigoplus_{i=1}^l\Z\Lambda_i$ where $\Lambda_i$ are the fundamental weights defined by the equation
$\frac{2\<\Lambda_i,\alpha_j\>}{\<\alpha_j,\alpha_j\>}=\delta_{i,j}.$ Let $P_+$ be the subset
of $P$ consisting of the dominant weight $\Lambda\in P$ in the sense that $\frac{2\<\Lambda,\alpha_j\>}{\<\alpha_j,\alpha_j\>}$ is nonnegative for all $j.$ For any nonnegative
integer $k$ we also let $P_+^k$ be the subset of $P_+$ consisting of $\Lambda$ satisfying $\<\Lambda,\theta\>
\leq k.$

Let $Q=\sum_{i=1}^l\Z\alpha_i$ be the root lattice and $Q_L$ be the sublattice of $Q$ spanned by the long roots. Recall that the dual lattice $Q_L^{\circ}$ consists $\lambda\in \h^*$ such that $\<\lambda,\alpha\>\in\Z$ for
all $\alpha\in Q_L.$

For the purpose of identifying the irreducible $K(\g,k)$-module, we need the following Lemma.
\begin{lem}\label{QLP} For any simple Lie algebra $\g,$ $Q_L^{\circ}=P.$
\end{lem}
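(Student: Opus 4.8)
The plan is to prove the two inclusions $P\subseteq Q_L^{\circ}$ and $Q_L^{\circ}\subseteq P$ separately, and to handle the simply-laced case trivially (where $Q_L=Q$ and the statement is the classical fact $Q^{\circ}=P$), so the real content is in the non-simply-laced types $B_l, C_l, F_4, G_2$. First I would recall the definitions in the normalization fixed in the paper: long roots have squared length $2$, so $Q_L$ is the lattice generated by the long roots, and $Q_L^{\circ}=\{\lambda\in\h^*:\langle\lambda,\alpha\rangle\in\Z\text{ for all long }\alpha\}$. For the inclusion $P\subseteq Q_L^{\circ}$: if $\lambda\in P$ and $\alpha$ is a long root, then $\langle\alpha,\alpha\rangle=2$, so $\frac{2\langle\lambda,\alpha\rangle}{\langle\alpha,\alpha\rangle}=\langle\lambda,\alpha\rangle$, and this is an integer because $\lambda$ is a weight. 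Hence $P\subseteq Q_L^{\circ}$ holds in all types with no case analysis.

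The reverse inclusion $Q_L^{\circ}\subseteq P$ is the substantive step. Here I would argue that it suffices to show that the long roots span $\h^*$ (equivalently that $Q_L$ has finite index in $Q$) and then that every short coroot $\alpha^{\vee}=\frac{2\alpha}{\langle\alpha,\alpha\rangle}$ can be written as an integral combination of long roots; granting this, if $\lambda\in Q_L^{\circ}$ then $\langle\lambda,\alpha^{\vee}\rangle\in\Z$ for short roots automatically, and $\langle\lambda,\alpha^{\vee}\rangle=\langle\lambda,\alpha\rangle\in\Z$ for long roots, so $\frac{2\langle\lambda,\alpha\rangle}{\langle\alpha,\alpha\rangle}\in\Z$ for all $\alpha\in\Delta$, i.e. $\lambda\in P$. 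So the crux reduces to the purely root-system fact: for a non-simply-laced irreducible root system with long roots normalized to squared length $2$, the short coroots lie in the $\Z$-span $Q_L$ of the long roots. I would verify this by recalling the standard coordinate models (e.g. for $B_l$ the long roots are $\pm e_i\pm e_j$ and the short coroots, up to scaling in this normalization, are $\pm 2e_i$, which equal $(e_i+e_j)+(e_i-e_j)$; for $C_l$ the long roots are $\pm 2e_i$ and short coroots are $\pm e_i\pm e_j$, directly in $Q_L$; and $F_4$, $G_2$ by an explicit small check). An alternative, more uniform route is to observe that $Q_L$ is a root lattice of simply-laced type in its own right (the long roots form a sub-root-system of ADE type) and to match indices: compare $|Q_L^{\circ}/Q_L|$, $|P/Q|$, and $[Q:Q_L]$ and show $|Q_L^{\circ}/P|=1$.

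I expect the main obstacle to be giving a clean type-independent justification of the short-coroot computation rather than falling back on a case-by-case inspection of $B,C,F_4,G_2$; a case check is safe but inelegant. The cleanest packaging is probably: (i) note the long roots of $\Delta$ form an irreducible simply-laced root system $\Delta_L$ spanning $\h^*$, with $Q_L$ its root lattice, hence $Q_L^{\circ}$ is its weight lattice $P_L$; (ii) observe $Q\subseteq P_L=Q_L^{\circ}$ because every root has integral pairing with every long root (for long–long this is simply-laced integrality in $\Delta_L$, and for short $\alpha$ and long $\beta$, $\langle\alpha,\beta\rangle=\frac{\langle\alpha,\alpha\rangle}{2}\cdot\frac{2\langle\alpha,\beta\rangle}{\langle\alpha,\alpha\rangle}$ with $\frac{\langle\alpha,\alpha\rangle}{2}\in\{1\}$ after noting short roots have length... actually one must be careful here, so a direct check that $\langle\alpha,\beta\rangle\in\Z$ for short $\alpha$, long $\beta$ is needed — this follows from $\beta$ being long and the Cartan integers $\langle\beta,\alpha^{\vee}\rangle\in\Z$ combined with $\langle\alpha,\beta\rangle=\langle\beta,\alpha^{\vee}\rangle$ since $\langle\alpha,\alpha\rangle=\langle\alpha^{\vee},\alpha^{\vee}\rangle^{-1}\cdot 2$... ); and (iii) conclude $P\subseteq Q_L^{\circ}$ from part one of this plan and $Q_L^{\circ}\subseteq P$ by showing both have the same index over $Q$, using the explicit determinant/index data for each Cartan type. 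Given the delicacy of a fully uniform argument, in the write-up I would lead with the two easy inclusions that hold in general, isolate precisely the statement "$Q\subseteq Q_L^{\circ}\subseteq P$ forces $Q_L^{\circ}=P$ once the indices match", and close with the short per-type index verification, which is a finite and routine check.
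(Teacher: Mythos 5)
Your proposal is correct and follows essentially the same route as the paper's proof: the inclusion $P\subseteq Q_L^{\circ}$ is uniform and immediate from $\langle\alpha,\alpha\rangle=2$ for long $\alpha$, and the reverse inclusion is settled by an explicit coordinate check in types $B_l, C_l, F_4, G_2$; your packaging of that check as ``every short coroot lies in $Q_L$'' (i.e.\ $Q^{\vee}=Q_L$, whence $Q_L^{\circ}=(Q^{\vee})^{\circ}=P$ by the definition of $P$) is the same computation the paper performs, just read off on the other side of the pairing. One correction: your $C_l$ data are written in the normalization where long roots have squared length $4$; as literally stated, $\pm e_i\pm e_j$ does \emph{not} lie in the lattice generated by $\pm 2e_i$. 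In the paper's normalization the long roots are $\pm\sqrt{2}\epsilon_i$ and the short roots $\pm\frac{1}{\sqrt{2}}(\epsilon_i\pm\epsilon_j)$ have squared length $1$, so the short coroots are $\sqrt{2}(\epsilon_i\pm\epsilon_j)\in Q_L$ and your claim holds after rescaling. Finally, the hesitation in your step (ii) is unnecessary: for any root $\alpha$ and long root $\beta$ one has $\langle\alpha,\beta\rangle=\langle\alpha,\beta^{\vee}\rangle\in\Z$ because $\beta^{\vee}=\beta$, although the inclusion $Q\subseteq Q_L^{\circ}$ plays no role in your main line of argument.
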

\begin{proof} The result is obvious if $\g$ is a Lie algebra of type $A, D,E$ as
 $\<\alpha,\alpha\>=2$ if $\alpha$ is a long root. We now assume that $\g$ is a Lie algebra of  other type. First, observe that $P\subset Q_L^{\circ}.$ It remains to show that $Q_L^{\circ}\subset P.$ We will do a verification case by case using the root systems given in \cite{H}.

 (1) Type $B_l.$ Let $\E=\R^l$ with the standard orthonormal basis $\{\epsilon_1,...,\epsilon_l\}.$ Then
 $$\Delta=\{\pm\epsilon_i, \pm(\epsilon_i\pm\epsilon_j)|i\ne j\}.$$
 Let $\lambda\in Q_L^{\circ}.$ Then $\<\lambda,(\epsilon_i\pm\epsilon_j)\>\in\Z$ for all $i\ne j.$ This implies
 that $\<\lambda,\epsilon_i\>\in\frac{1}{2}\Z$ and $\frac{2\<\lambda,\alpha\>}{\<\alpha,\alpha\>}\in\Z$
 if $\alpha$ is a short root. That is, $\lambda\in P.$

 (2) Type $C_l.$ In this case, $$\Delta=\{\pm\sqrt{2}\epsilon_i, \pm\frac{1}{\sqrt{2}}(\epsilon_i\pm\epsilon_j)|i\ne j\}.$$
 If $\lambda\in Q_L^{\circ},$ then $\<\lambda,\epsilon_i\>\in\frac{1}{\sqrt{2}}\Z$ and $\frac{2\<\lambda,\alpha\>}{\<\alpha,\alpha\>}\in\Z$
 if $\alpha$ is a short root.

 (3) Type $F_4.$ Let $\E=\R^4.$ Then
 $$\Delta=\{\pm\epsilon_i, \pm(\epsilon_i\pm\epsilon_j), \pm\frac{1}{2}(\epsilon_1\pm \epsilon_2\pm \epsilon_3\pm \epsilon_4)|i\ne j\}.$$
 If $\lambda\in Q_L^{\circ},$ then $\<\lambda,\epsilon_i\>\in\frac{1}{2}\Z$ and $\frac{2\<\lambda,\alpha\>}{\<\alpha,\alpha\>}\in\Z$
 if $\alpha$ is a short root.

 (4) Type $G_2.$ Let $\E$ be the subspace of $\R^3$ orthogonal to $\epsilon_1+\epsilon_2+\epsilon_3.$
 Then
 $$\Delta=\pm\frac{1}{\sqrt{3}}\{\epsilon_i-\epsilon_j, 2\epsilon_1-\epsilon_2-\epsilon_3, 2\epsilon_2-\epsilon_1-\epsilon_3,
  2\epsilon_3-\epsilon_1-\epsilon_2|i\ne j\}.$$
 If $\lambda\in Q_L^{\circ},$ then
 $$\<\lambda, \frac{1}{\sqrt{3}}(2\epsilon_1-\epsilon_2-\epsilon_3- 2\epsilon_2+\epsilon_1+\epsilon_3)\>\in \Z.$$
 This gives $\<\lambda, \epsilon_1-\epsilon_2\>\in \frac{1}{\sqrt{3}}\Z$ and $\frac{2\<\lambda, \frac{1}{\sqrt{3}}(\epsilon_1-\epsilon_2)\>}{\frac{1}{3}\<\epsilon_1-\epsilon_2,\epsilon_1-\epsilon_2\>}\in\Z.$
 Similarly, one can verify that $\frac{2\<\lambda,\alpha\>}{\<\alpha,\alpha\>}\in\Z$
 for any short root $\alpha.$ The proof is complete.
\end{proof}

Let $\wg=\g\otimes \C[t,t^{-1}]\oplus \C K$ be the affine Lie algebra. Fix a nonnegative integer $k.$ For any $\Lambda\in P_+^k$ let $L(\Lambda)$ be the irreducible highest weight $\g$-module with highest weight $\Lambda$
and $L_{\wg}(k,\Lambda)$ be the unique irreducible $\wg$-module such that $L_{\wg}(k,\Lambda)$ is generated by $L(\Lambda)$ and $\g\otimes t^n L(\Lambda)=0$ for $t>0$ and $K$ acts as constant $k.$ The following result is well known
(cf. \cite{FZ},\cite{LL}, \cite{Z}):
\begin{thm} The $L_{\wg}(k,0)$ is a vertex operator algebra satisfying conditions (V1)-(V3). Namely,
$L_{\wg}(k,0)$ is a simple, rational and $C_2$-cofinite  vertex operator algebra whose irreducible modules
are $L_{\wg}(k,\Lambda)$ for $\Lambda\in P_+^k$ and the weight $\lambda_{L_{\wg}(k,\Lambda)}$ of $L_{\wg}(k,\Lambda)$ is
$\frac{\<\Lambda+2\rho, \Lambda\>}{2(k+h^{\vee})}$ where  $\rho=\sum_{i=1}^l\Lambda_i$ and
$h^{\vee}$ is the dual Coxeter number.
\end{thm}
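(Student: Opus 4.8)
The plan is to deduce the statement from standard facts about affine vertex operator algebras, carrying out in detail only the conformal-weight computation. The starting point is the Sugawara construction: fixing dual bases $\{u^i\}$ and $\{u_i\}$ of $\g$ with respect to $\<\,,\>$, the vector
$$\w=\frac{1}{2(k+h^\vee)}\sum_i u^{i}_{-1}u_{i,-1}\1$$
endows $L_{\wg}(k,0)$ with the structure of a vertex operator algebra of central charge $\frac{k\dim\g}{k+h^\vee}$, by \cite{FZ} (see also \cite{LL}). With this in place I would check the three conditions (V1)--(V3) in turn.

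For (V1): by construction $L_{\wg}(k,0)$ is the irreducible highest weight $\wg$-module of highest weight $0$, hence simple; and a PBW spanning argument (elements $x_1(-n_1)\cdots x_r(-n_r)\1$ with $n_j\geq 1$) shows it is $\Z_{\geq 0}$-graded by the $L(0)$-eigenvalue, with degree-zero subspace $\C\1$, since $\g\ot t^{-n}$ shifts the grading up by $n$. For (V2): the rationality of $L_{\wg}(k,0)$ and the identification of its irreducible modules with the integrable highest weight modules $L_{\wg}(k,\Lambda)$, $\Lambda\in P_+^k$ --- each an ordinary module whose lowest-weight space is the finite dimensional $\g$-module $L(\Lambda)$ --- is again due to \cite{FZ} (see also \cite{LL}); $C_2$-cofiniteness of $L_{\wg}(k,0)$ is likewise well-known (these vertex operator algebras are in fact regular).

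It remains to verify the conformal-weight part of (V3). The conformal weight $\lambda_{L_{\wg}(k,\Lambda)}$ is the $L(0)$-eigenvalue on the lowest-weight space $L(\Lambda)\subset L_{\wg}(k,\Lambda)$, and by the Sugawara formula $L(0)$ acts there as $\frac{1}{2(k+h^\vee)}$ times the Casimir operator $\Omega=\sum_i u^iu_i$ of $\g$. I would evaluate $\Omega$ on a highest weight vector $v_\Lambda$ through the triangular decomposition $\g=\mn_-\oplus\h\oplus\mn_+$: the $\mn_+$-terms annihilate $v_\Lambda$, the $f_\alpha e_\alpha$-terms act via $[e_\alpha,f_\alpha]$, and the Cartan part contributes $\<\Lambda,\Lambda\>$; collecting these gives $\Omega v_\Lambda=\<\Lambda,\Lambda+2\rho\>v_\Lambda$, hence $\lambda_{L_{\wg}(k,\Lambda)}=\frac{\<\Lambda+2\rho,\Lambda\>}{2(k+h^\vee)}$, which for $\Lambda\in P_+$ is $\geq 0$ (as $\<\Lambda,\Lambda\>\geq 0$ and $\<\Lambda,\rho\>\geq 0$) and vanishes only for $\Lambda=0$, i.e. only for $L_{\wg}(k,0)$; this is (V3). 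I do not expect a genuine obstacle, the statement being classical; the only point needing care is the bookkeeping of normalizations --- that $\<\,,\>$ is scaled so that long roots have square length $2$, compatibly with the factor $\frac{1}{2(k+h^\vee)}$ in $\w$ and with the standard definition of $h^\vee$ --- which is exactly what makes the Casimir eigenvalue $\<\Lambda,\Lambda+2\rho\>$ and the denominator $2(k+h^\vee)$ come out as stated.
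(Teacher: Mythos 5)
Your proposal is correct, and it is in line with the paper: the paper states this theorem without proof, citing it as well known from \cite{FZ}, \cite{LL} and \cite{Z}, and your sketch is precisely the standard argument those references contain --- Sugawara construction, simplicity and CFT type from the highest weight module structure, rationality and the classification of irreducibles from \cite{FZ}, and the conformal weight $\frac{\<\Lambda+2\rho,\Lambda\>}{2(k+h^{\vee})}$ via the Casimir eigenvalue $\<\Lambda,\Lambda+2\rho\>$ on the top level $L(\Lambda)$. Your verification of (V3), that this quantity is nonnegative for dominant $\Lambda$ and vanishes only for $\Lambda=0$ (by positive definiteness of $\<\,,\>$ and $\<\Lambda,\rho\>\geq 0$), together with the normalization remark on $\<\theta,\theta\>=2$, is exactly the bookkeeping one needs; there is nothing to add.
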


Let $\theta=\sum_{i=1}^la_i\alpha_i.$ Here is a  list of $a_i=1$  using the labeling from \cite{K}:
\begin{eqnarray*}
A_l: & a_1,...,a_l\\
B_l: & a_1\\
C_l: &a_l\\
D_l: &a_1, a_{l-1}, a_l\\
E_6: &a_1, a_5\\
E_7: &a_6
\end{eqnarray*}
Denote by $I$ the set of $i$ with $a_i=1.$ It is easy to see that the cardinality of $I$ is equal to $|P/Q|$ \cite{L3}.

Let $M_{\widehat{\h}}(k)$ be the vertex operator subalgebra of $L_{\widehat{\g}}(k,0)$
generated by $h(-1)\1$ for $h\in \mathfrak h.$
For $\lambda\in
{\mathfrak h}^*,$ denote by  $M_{\widehat{\h}}(k,\lambda)$ the irreducible
highest weight module for $\wh$ with a highest weight vector
$e^\lambda$ such that $h(0)e^\lambda = \lambda(h) e^\lambda$ for
$h\in \mathfrak h.$ The parafermion vertex operator algebra $K(\g,k)$ is the commutant \cite{FZ} of
$M_{\widehat{\h}}(k)$ in $L_{\wg}(k,0).$ We have the following decomposition
$$L_{\wg}(k,\Lambda)=\bigoplus_{\lambda\in Q+\Lambda}M_{\widehat{\h}}(k,\lambda)\otimes M^{\Lambda,\lambda}$$
as $M_{\widehat{\h}}(k)\otimes K(\g,k)$-module. Moreover, $M^{0,0}=K(\g,k)$ and $M^{\Lambda,\lambda}$ is an
irreducible $K(\g,k)$-module \cite{DR}.

It is proved in \cite{DW3} that the lattice vertex operator algebra $V_{\sqrt{k}Q_L}$ is a vertex operator subalgebra of $L_{\widehat{\g}}(k,0)$ and the parafermion vertex operator algebra $K(\g,k)$ is also a commutant
of $V_{\sqrt{k}Q_L}$ in $L_{\widehat{\g}}(k,0).$ This gives us another decomposition
$$L_{\widehat{\g}}(k,\Lambda)=\bigoplus_{i\in Q/kQ_L}V_{\sqrt{k}Q_L+\frac{1}{\sqrt{k}}(\Lambda+\beta_i)}\otimes M^{\Lambda,\Lambda+\beta_i}$$
as modules for $V_{\sqrt{k}Q_L}\otimes K(\g,k)$ where $M^{\Lambda,\lambda}$ is as before and
$Q=\cup_{i\in Q/kQ_L} (kQ_L+\beta_i).$

Here are the main results on $K(\g,k).$
\begin{thm}\label{DR} Let $\g$ be a simple Lie algebra and $k$ a positive integer.

(1) The $K(\g,k)$ is a vertex operator algebra satisfies conditions (V1)-(V3).

(2) For any $\Lambda\in P_+^k,$ $\lambda\in \Lambda+Q$
and $\alpha\in Q_L,$ $M^{\Lambda,\lambda}=M^{\Lambda,\lambda+k\alpha}.$

(3) For each $i\in I,$ $\Lambda\in P_+^k$ there exists a unique  $\Lambda^{(i)}\in P_+^k$  such that
for any $\lambda\in \Lambda+Q,$ $M^{\Lambda,\lambda}=M^{\Lambda^{(i)},\lambda+k\Lambda_i}.$

(4) Any irreducible $K(\g,k)$-module is isomorphic to $M^{\Lambda,\lambda}$ for some $\Lambda\in P_+^k$
and $\lambda\in \Lambda+Q.$
\end{thm}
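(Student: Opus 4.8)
The plan is to establish the four parts using three ingredients: bookkeeping on the two known decompositions of $L_{\wg}(k,\Lambda)$, which gives (2); the simple-current structure of $L_{\wg}(k,0)$, which gives (3) once (1) is available; and the general theory of mutually commutant pairs inside a rational, $C_2$-cofinite vertex operator algebra, which gives (1) and (4).

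For (2), I would restrict the $V_{\sqrt kQ_L}\otimes K(\g,k)$-module $L_{\wg}(k,\Lambda)$ further along the inclusion $M_{\widehat{\h}}(k)\subset V_{\sqrt kQ_L}$. Each lattice summand $V_{\sqrt kQ_L+\frac1{\sqrt k}(\Lambda+\beta_i)}$ decomposes, as an $M_{\widehat{\h}}(k)$-module, into a direct sum of Heisenberg Fock modules whose lowest weights form precisely one coset modulo $kQ_L$, and the $K(\g,k)$-multiplicity space attached to every one of these Fock modules is the single module $M^{\Lambda,\Lambda+\beta_i}$. Comparing with $L_{\wg}(k,\Lambda)=\bigoplus_{\lambda\in Q+\Lambda}M_{\widehat{\h}}(k,\lambda)\otimes M^{\Lambda,\lambda}$ and using that the Fock modules $M_{\widehat{\h}}(k,\lambda)$ for distinct $\lambda$ are pairwise non-isomorphic, one reads off that $M^{\Lambda,\lambda}$ depends only on the class of $\lambda$ in $Q/kQ_L$; this is exactly $M^{\Lambda,\lambda}\cong M^{\Lambda,\lambda+k\alpha}$ for $\alpha\in Q_L$, and it also makes $M^{\Lambda,\Lambda+\beta_i}$ well defined.

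For (3), fix $i\in I$. By \cite{L1}, \cite{L3} the module $L_{\wg}(k,k\Lambda_i)$ is a simple current of $L_{\wg}(k,0)$, so $L_{\wg}(k,k\Lambda_i)\boxtimes L_{\wg}(k,\Lambda)=L_{\wg}(k,\Lambda^{(i)})$ for a uniquely determined $\Lambda^{(i)}\in P_+^k$. The first step is to identify $M^{k\Lambda_i,k\Lambda_i}$ with $K(\g,k)$: the highest weight vector $v$ of $L_{\wg}(k,k\Lambda_i)$ has conformal weight $\frac{\langle k\Lambda_i+2\rho,k\Lambda_i\rangle}{2(k+h^\vee)}$, which for $i\in I$ equals $\frac k2\langle\Lambda_i,\Lambda_i\rangle$, whereas $v$ lies in the summand $M_{\widehat{\h}}(k,k\Lambda_i)\otimes M^{k\Lambda_i,k\Lambda_i}$ and the Fock module $M_{\widehat{\h}}(k,k\Lambda_i)$ already has lowest conformal weight $\frac{\langle k\Lambda_i,k\Lambda_i\rangle}{2k}=\frac k2\langle\Lambda_i,\Lambda_i\rangle$; hence $M^{k\Lambda_i,k\Lambda_i}$ carries a vector of conformal weight $0$, so by condition (V3) for $K(\g,k)$ (part (1), established below) it is isomorphic to $K(\g,k)$, and $v$ is a scalar multiple of $e^{k\Lambda_i}\otimes\1$. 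The second step is to pick a nonzero intertwining operator $\mathcal Y$ of type $\Itw{L_{\wg}(k,\Lambda^{(i)})}{L_{\wg}(k,k\Lambda_i)}{L_{\wg}(k,\Lambda)}$ (which exists because the fusion rule is at least $1$), substitute the generating vector $e^{k\Lambda_i}\otimes\1$, and decompose the result along $M_{\widehat{\h}}(k)\otimes K(\g,k)$: its Heisenberg factor is the standard nonzero lattice-type intertwining operator that shifts weights by $k\Lambda_i$, and its $K(\g,k)$ factor lies in $I_{K(\g,k)}\Itw{M^{\Lambda^{(i)},\lambda+k\Lambda_i}}{K(\g,k)}{M^{\Lambda,\lambda}}$, hence is a $K(\g,k)$-module map $M^{\Lambda,\lambda}\to M^{\Lambda^{(i)},\lambda+k\Lambda_i}$. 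Since $\mathcal Y\ne 0$ and $e^{k\Lambda_i}\otimes\1$ generates $L_{\wg}(k,k\Lambda_i)$, this map is nonzero for at least one $\lambda$, hence an isomorphism by irreducibility; combining with (2) and repeating the construction over the remaining cosets of $kQ_L$ yields $M^{\Lambda,\lambda}\cong M^{\Lambda^{(i)},\lambda+k\Lambda_i}$ for all $\lambda\in\Lambda+Q$.

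For (1) and (4), simplicity and the CFT-type property of $K(\g,k)$ follow from general commutant theory and the explicit low-degree structure (in particular $K(\g,k)_0=\C\1$, since the only vectors of $L_{\wg}(k,0)$ of conformal weight $\le 1$ commuting with $M_{\widehat{\h}}(k)$ are the scalars); the conformal weights are nonnegative and vanish only on $K(\g,k)$ by the grading inherited from $L_{\wg}(k,0)$; and $C_2$-cofiniteness is the theorem of \cite{M} applied to the finite extension $V_{\sqrt kQ_L}\otimes K(\g,k)\subseteq L_{\wg}(k,0)$. The essential point is rationality: one uses that, by \cite{DW3}, $K(\g,k)$ and $V_{\sqrt kQ_L}$ are each other's commutants inside the rational, $C_2$-cofinite vertex operator algebra $L_{\wg}(k,0)$, and invokes the recent results of \cite{M}, \cite{KM}, \cite{CM} on rationality of commutants. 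The same circle of ideas gives (4): for a commutant pair in a rational, $C_2$-cofinite vertex operator algebra, every irreducible module of $K(\g,k)$ occurs as a multiplicity space in the decomposition of some irreducible $L_{\wg}(k,0)$-module $L_{\wg}(k,\Lambda)$, hence is isomorphic to some $M^{\Lambda,\lambda}$ with $\lambda\in\Lambda+Q$. I expect the rationality of $K(\g,k)$ — together with the control over the irreducible modules of the commutant needed for (4) — to be the main obstacle, since it is the only place where genuinely deep analytic and categorical input enters; everything else is bookkeeping with decompositions, conformal weights, and intertwining operators.
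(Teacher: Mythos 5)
A preliminary remark: the paper gives no proof of Theorem \ref{DR} at all --- immediately after its statement it says that the $C_2$-cofiniteness is from \cite{ALY1} (see also \cite{M}) and that the remaining assertions are quoted from \cite{ALY2} and \cite{DR}. So your proposal has to be judged against the arguments of those references. Your sketches of (2) and (3) do reproduce the standard arguments there: (2) by matching the Heisenberg Fock decomposition $L_{\wg}(k,\Lambda)=\bigoplus_{\lambda}M_{\widehat{\h}}(k,\lambda)\otimes M^{\Lambda,\lambda}$ against the finer $V_{\sqrt{k}Q_L}\otimes K(\g,k)$ decomposition, and (3) by feeding the top vector of the simple current $L_{\wg}(k,k\Lambda_i)$ into a nonzero intertwining operator and reading off a $K(\g,k)$-homomorphism $M^{\Lambda,\lambda}\to M^{\Lambda^{(i)},\lambda+k\Lambda_i}$. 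In (3) you still owe the step that this map is nonzero for \emph{every} coset $\lambda+kQ_L$, not merely for one: this follows because $e^{k\Lambda_i}\otimes\1$ generates $L_{\wg}(k,k\Lambda_i)$, so the coefficients of $\mathcal{Y}(e^{k\Lambda_i}\otimes\1,z)$ applied to $L_{\wg}(k,\Lambda)$ span all of $L_{\wg}(k,\Lambda^{(i)})$ and hence meet every Heisenberg isotypic component; as written, ``repeating the construction over the remaining cosets'' does not by itself rule out that some component of the map vanishes.

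The genuine gap is in (4). You justify it by the principle that for a commutant pair inside a rational, $C_2$-cofinite vertex operator algebra, every irreducible module of the commutant occurs as a multiplicity space in some irreducible \emph{untwisted} module of the ambient algebra. That is not a theorem, and it fails in general: irreducible modules of a commutant typically also arise from twisted sectors of the big algebra and need not occur in any $L_{\wg}(k,\Lambda)$ a priori. The route actually taken in \cite{DR} --- and the one this paper itself signals in the proof of Lemma \ref{qtensor1} --- is the orbifold realization $L_{\wg}(k,0)^G=V_{\sqrt{k}Q_L}\otimes K(\g,k)$, where $G$ is the dual group of $Q/kQ_L$ acting by inner automorphisms. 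One then needs two further inputs: (a) the results of \cite{M}, \cite{KM}, \cite{CM} that $V^G$ is rational and $C_2$-cofinite and that every irreducible $V^G$-module occurs in some irreducible $g$-twisted $V$-module with $g\in G$; and (b) the fact that for these particular $g$ the irreducible $g$-twisted $L_{\wg}(k,0)$-modules are obtained from the $L_{\wg}(k,\Lambda)$ by Li's $\Delta$-operator, which alters only the Heisenberg/lattice tensor factor and therefore leaves the $K(\g,k)$-multiplicity spaces in the list $\{M^{\Lambda,\lambda}\}$. Without (a) and (b) the exhaustiveness claimed in (4) is not established; with them, the rest of your bookkeeping goes through.
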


The $C_2$-cofiniteness of $K(\g,k)$ was obtained in \cite{ALY1} (also see \cite{M}) and the rest results in Theorem can be found in \cite{ALY2} and \cite{DR}.

The following result will be useful later.
\begin{lem}\label{iden3}
 Fix $\Lambda\in P_+^k$ and $\lambda\in \Lambda+Q.$ Let $A=\{\Lambda+\beta_j+kQ_L|j\in Q/kQ_L\}.$
Then the set $\{(\Lambda, \lambda+kQ_L), (\Lambda^{(i)}, \lambda+k\Lambda_i+kQ_L) |i\in I\}$ gives  exactly $|I|$ elements
in $P_+^k\times A.$
\end{lem}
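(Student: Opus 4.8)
The plan is to reduce the lemma to the classical fact that the fundamental weights $\Lambda_i$ with $i\in I$ represent pairwise distinct cosets of $Q$ in $P$; once that is in hand, only elementary lattice bookkeeping remains. First I would fix the normalization: in the labelling of \cite{K} the affine node $0$ has $a_0=1$, so $0\in I$, and writing $\Lambda_0=0$ one has $\Lambda^{(0)}=\Lambda$ because $L_{\wg}(k,0)$ is the unit object. Hence the set displayed in the lemma is simply $\{(\Lambda^{(i)},\lambda+k\Lambda_i+kQ_L)\mid i\in I\}$, which a priori has at most $|I|$ elements. So the two things to check are: (a) each listed pair genuinely indexes an irreducible $K(\g,k)$-module of the asserted form, and (b) the $|I|$ pairs are pairwise distinct.

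Step (a) is immediate: $\Lambda^{(i)}\in P_+^k$ by definition, and $\lambda+k\Lambda_i\in\Lambda^{(i)}+Q$ since $\lambda\in\Lambda+Q$ and, by Theorem~\ref{DR}(3), the symbol $M^{\Lambda^{(i)},\lambda+k\Lambda_i}$ stands for an actual module, which forces $\Lambda^{(i)}+Q=\Lambda+k\Lambda_i+Q$. Thus $\lambda+k\Lambda_i+kQ_L$ is a coset of $kQ_L$ inside $\Lambda^{(i)}+Q$, so the pair lies in $P_+^k\times A$, where $A$ is read as the coset set $\{\Lambda'+\beta_j+kQ_L\mid j\in Q/kQ_L\}$ attached to the first coordinate $\Lambda'=\Lambda^{(i)}$ (for $i=0$ this is literally the set $A$ of the statement, since $\Lambda^{(0)}=\Lambda$).

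For step (b) I would argue as follows. Suppose $i,j\in I$ and $(\Lambda^{(i)},\lambda+k\Lambda_i+kQ_L)=(\Lambda^{(j)},\lambda+k\Lambda_j+kQ_L)$. Comparing the second coordinates gives $k\Lambda_i-k\Lambda_j\in kQ_L$; since multiplication by $k$ is injective on the lattice $P$ and maps $Q_L$ onto $kQ_L$, this forces $\Lambda_i-\Lambda_j\in Q_L\subseteq Q$, that is, $\Lambda_i\equiv\Lambda_j\pmod Q$. Now I would invoke \cite{L1}, \cite{L3}: the simple currents of $L_{\wg}(k,0)$ are exactly the $L_{\wg}(k,k\Lambda_i)$ for $i\in I$, they form a group isomorphic to $P/Q$, and the isomorphism sends $L_{\wg}(k,k\Lambda_i)$ to $\Lambda_i+Q$; in particular $i\mapsto\Lambda_i+Q$ is a bijection $I\to P/Q$ (consistently with $|I|=|P/Q|$). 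Applying injectivity of this map to $\Lambda_i\equiv\Lambda_j\pmod Q$ yields $i=j$, which proves (b). Note that distinctness is already detected by the second coordinate; the first coordinate $\Lambda^{(i)}$ merely records how $\Lambda$ itself moves under the identification of Theorem~\ref{DR}(3) and is not needed for the count.

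The only genuinely non-formal ingredient, and the place where the argument could fail, is the injectivity of $i\mapsto\Lambda_i+Q$ on $I$: without it the orbit would collapse and the lemma would be false. This is classical — it reflects the fact that the special nodes of the untwisted affine Dynkin diagram form a single orbit under the rotation group $P/Q$ — and for the present purposes it is cleanest to quote it through the simple-current description of \cite{L1}, \cite{L3}; a self-contained proof would verify it case by case from the affine diagrams, exactly as one does for the cardinality statement $|I|=|P/Q|$. Everything else (the equivalence $k\mu\in kQ_L\Leftrightarrow\mu\in Q_L$, the inclusion $Q_L\subseteq Q$, and the identification of which $Q$-coset each label lies in) is routine.
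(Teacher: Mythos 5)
Your overall strategy is close to the paper's: treat the base pair as the case $i=0$ (with $\Lambda_0=0$, $\Lambda^{(0)}=\Lambda$), reduce distinctness of the pairs to the lattice statement that $\Lambda_i-\Lambda_j\notin Q_L$ for distinct $i,j\in I$, and then verify that statement. The paper instead quotes \cite{DR} for the comparison of $(\Lambda,\lambda+kQ_L)$ with each $(\Lambda^{(i)},\lambda+k\Lambda_i+kQ_L)$, and for the remaining comparisons observes that either $\g$ is simply laced (so $Q_L=Q$ and the minuscule weights lie in distinct $Q$-cosets) or $\g$ is of type $B_l,C_l$, where there is only one nontrivial index and nothing is left to check.

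The gap is in your step (b), at exactly the point you flagged. From equality of the second coordinates you correctly get $\Lambda_i-\Lambda_j\in Q_L$, but you then weaken this to $\Lambda_i-\Lambda_j\in Q$ and invoke injectivity of $i\mapsto\Lambda_i+Q$ on $I$. That map is \emph{not} injective for the non-simply-laced types, which is precisely where $Q_L\subsetneq Q$ and the weakening loses the relevant information. Concretely, for $\g$ of type $B_l$ one has $I=\{0,1\}$ and $\Lambda_1=\epsilon_1=\alpha_1+\cdots+\alpha_l\in Q$, so $\Lambda_1+Q=\Lambda_0+Q$ and your argument cannot separate $(\Lambda^{(1)},\lambda+k\Lambda_1+kQ_L)$ from $(\Lambda,\lambda+kQ_L)$; similarly $\Lambda_l\in Q$ for $C_l$ with $l$ even. (Relatedly, the charge of the simple current $L_{\wg}(k,k\Lambda_1)$ in type $B_l$ is $k\Lambda_1+Q=Q$, so the assignment $L_{\wg}(k,k\Lambda_i)\mapsto\Lambda_i+Q$ is not the isomorphism with $P/Q$ you want.) What the argument actually needs, and what is true, is the stronger statement $\Lambda_i-\Lambda_j\notin Q_L$: for $B_l$ the long-root lattice $Q_L$ is the $D_l$ lattice $\{\sum_i c_i\epsilon_i :\ c_i\in\Z,\ \sum_i c_i\in 2\Z\}$, and $\epsilon_1\notin Q_L$ even though $\epsilon_1\in Q=\Z^l$. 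So the distinctness must be tested modulo $Q_L$, not modulo $Q$; for $A,D,E$ the two coincide and your argument is complete, but for $B_l$ and $C_l$ you need the separate fact $\Lambda_i\notin Q_L$ (which the paper takes from \cite{DR}).
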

\begin{proof} It is proved in \cite{DR} that $(\Lambda, \lambda+kQ_L)$
is different from $(\Lambda^{(i)}, \lambda+k\Lambda_i+kQ_L)$ for $i\in I.$ Let $i,j\in I$ be distinct. We can assume that $\Lambda^{(i)}=\Lambda^{(j)}.$ Then
$$(\Lambda^{(i)}, \lambda+k\Lambda_i+kQ_L)= (\Lambda^{(j)}, \lambda+k\Lambda_j+kQ_L)$$
if and only if $\Lambda_i-\Lambda_j\in Q_L.$ If $\g$ is of $A,D,E$ type, this cannot happen. For type $B_l$ and
$C_l$ the cardinality of $I$ is 1. The proof is complete.
\end{proof}

From Theorem \ref{DR} and Lemma \ref{iden3} we immediately have:
\begin{coro} Let $\g$ be a simple Lie algebra and $k$ a positive integer. Then
$K(\g,k)$ has at most $\frac{|P_+^k||Q/kQ_L|}{|P/Q|}$ inequivalent irreducible modules.
\end{coro}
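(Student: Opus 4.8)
The plan is to obtain the bound by combining the classification in Theorem \ref{DR} with the elementary count in Lemma \ref{iden3}; no genuinely new input is required. First I would invoke Theorem \ref{DR}(4): every irreducible $K(\g,k)$-module is isomorphic to some $M^{\Lambda,\lambda}$ with $\Lambda\in P_+^k$ and $\lambda\in\Lambda+Q$. Then, using Theorem \ref{DR}(2), I would observe that $M^{\Lambda,\lambda}$ depends only on $\Lambda$ and on the class of $\lambda$ in $(\Lambda+Q)/kQ_L$. Since $Q_L$ has full rank in $Q$, the group $Q/kQ_L$ is finite, and because $\Lambda+Q$ is a single $Q$-coset the set $A_\Lambda=\{\Lambda+\beta_j+kQ_L\mid j\in Q/kQ_L\}$ of admissible classes has exactly $|Q/kQ_L|$ elements. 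Consequently every irreducible $K(\g,k)$-module occurs among the modules $M^{\Lambda,\lambda}$ as the pair $(\Lambda,\lambda+kQ_L)$ ranges over the finite set $S=\{(\Lambda,\mu):\Lambda\in P_+^k,\ \mu\in A_\Lambda\}$, and $|S|=|P_+^k|\,|Q/kQ_L|$.

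Next I would bring in Theorem \ref{DR}(3): for each $i\in I$ one has $M^{\Lambda,\lambda}\cong M^{\Lambda^{(i)},\lambda+k\Lambda_i}$. I would repackage these identifications as an action on $S$ of the group of simple currents $L_{\wg}(k,k\Lambda_i)$, $i\in I$, which has order $|I|=|P/Q|$ under $\boxtimes$ with $L_{\wg}(k,0)$ as identity; the relevant point is that this family is closed under $\boxtimes$, so that the ``clusters'' $\{(\Lambda^{(i)},\mu+k\Lambda_i+kQ_L):i\in I\}$ are actual orbits, and hence partition $S$, rather than merely overlapping subsets. Lemma \ref{iden3} is exactly the assertion that for every $(\Lambda,\lambda)$ the cluster through $(\Lambda,\lambda+kQ_L)$ has exactly $|P/Q|$ distinct elements; that is, the action is free.

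Finally I would conclude by orbit counting: $S$ breaks up into $|S|/|P/Q|=\dfrac{|P_+^k|\,|Q/kQ_L|}{|P/Q|}$ free orbits, modules indexed by points of the same orbit are isomorphic, and therefore $K(\g,k)$ has at most $\dfrac{|P_+^k||Q/kQ_L|}{|P/Q|}$ inequivalent irreducible modules. The bound is only ``at most'' at this stage because there could a priori be further isomorphisms among the $M^{\Lambda,\mu}$ beyond those recorded in Theorem \ref{DR}(2)--(3); showing that there are none, so that the bound is attained, is postponed to the subsequent sections and rests on a global-dimension comparison between $L_{\wg}(k,0)$ and $V_{\sqrt{k}Q_L}\otimes K(\g,k)$. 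The only mildly delicate point in the argument above is the bookkeeping in the second step — checking that passage to classes modulo $kQ_L$ is compatible with the simple-current shifts and that the clusters genuinely partition $S$ — and once Lemma \ref{iden3} is in hand this is routine, which is why the corollary follows at once.
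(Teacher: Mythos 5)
Your argument is correct and is essentially the paper's own proof: the corollary is stated there as an immediate consequence of Theorem \ref{DR} and Lemma \ref{iden3}, and your write-up simply makes explicit the orbit-counting (index set of size $|P_+^k||Q/kQ_L|$ after reducing modulo $kQ_L$, clusters of size $|P/Q|$ from the simple-current identifications, hence at most $|P_+^k||Q/kQ_L|/|P/Q|$ classes) that the paper leaves implicit. Note that even without verifying that the clusters literally partition the index set, the bound already follows from the weaker fact that every isomorphism class contains a full cluster of $|P/Q|$ index pairs.
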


\section{Quantum dimensions of the parafermion vertex operator algebras}

In this section we compute the quantum dimensions of irreducible $K(\g,k)$-modules. The ideas and methods here
are different from these used in \cite{DW2}. We do not need the $S$-matrix for the computation.

First we need a result on the quantum dimension in orbifold theory from \cite{DJX1}. Let $V$ be a simple
vertex operator algebra and $G$ a finite automorphism group of $V.$ Then $V^G$ is a vertex operator subalgebra
and $V$ has a decomposition
$$V=\bigoplus_{\chi\in \hat G}W_{\chi}\otimes V_{\chi}$$
where $\hat G$ is the set of irreducible characters of $G$ and $W_{\chi}$ is the simple $G$-module with the character $\chi$ and $V_{\chi}$ is an irreducible $V^G$-module \cite{DM}, \cite{DLM0}. We need the following result
from \cite{DJX1}.
\begin{thm}\label{quan}
Let $V$ be  a vertex operator aglebra satisfying (V1)-(V3) and $G$ a finite automorphism group of $V$
such that $V$ is $g$-rational for every $g\in G$ and any irreducible $g$-twisted $V$-module
$M=\oplus_{n\geq 0}M_{\lambda +\frac{n}{T}}$ has positive conformal weight $\lambda$ if $g \neq 1$ where $T$ is the order of $g.$
Then $\qdim_{V^G} V_{\chi}= \dim W_{\chi}.$
\end{thm}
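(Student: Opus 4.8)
The plan is to reduce the statement to an asymptotic estimate on graded traces and then read off the answer from a single limit.

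First I would pass to $q$-characters. For $g\in G$ set $Z_V(g,\tau)=\tr_V\!\big(g\,q^{L(0)-c/24}\big)$, a holomorphic function on $\H$ (well defined by Theorem \ref{2.3} with $v=\1$, since $V$ is $g$-rational and $C_2$-cofinite). The decomposition $V=\bigoplus_{\chi\in\hat G}W_\chi\otimes V_\chi$, together with the projection formula for the $\chi$-isotypic component of a finite-group representation, gives
$$\ch_q V_\chi=\frac1{|G|}\sum_{g\in G}\overline{\chi(g)}\,Z_V(g,\tau),\qquad \ch_q V^G=\frac1{|G|}\sum_{g\in G}Z_V(g,\tau),$$
the second being the special case of the trivial character. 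Hence, by the definition of the quantum dimension,
$$\qdim_{V^G}V_\chi=\lim_{q\to1^-}\frac{\sum_{g\in G}\overline{\chi(g)}\,Z_V(g,\tau)}{\sum_{g\in G}Z_V(g,\tau)},$$
and everything comes down to the behaviour of the ratios $Z_V(g,\tau)/Z_V(1,\tau)$ as $q\to1^-$.

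The key step, and the hard part, is the claim that $\lim_{q\to1^-}Z_V(g,\tau)/Z_V(1,\tau)=\delta_{g,1}$, taken along $q=e^{-2\pi y}$, $y\to0^+$. For $g=1$ this is trivial. For $g\neq1$ I would apply the modular transformation $\tau\mapsto-1/\tau$. By the orbifold version of modular invariance of trace functions \cite{DLM0} --- available precisely because $V$ is $C_2$-cofinite and $g$-rational for every $g\in G$ --- $Z_V(g,-1/\tau)$ is a finite $\C$-linear combination of graded characters $\ch_q M$ of the irreducible $g^{\pm1}$-twisted $V$-modules $M$, while by Theorem \ref{2.3} one has $Z_V(1,-1/\tau)=\sum_j S_{0j}\,\ch_q M^j$ over the ordinary irreducibles. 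Writing $\tau=iy$, so that $-1/\tau=i/y\to i\infty$ and $q'=e^{2\pi i(-1/\tau)}\to0^+$, each such $\ch_q M$ evaluated at $-1/\tau$ is asymptotic to $(\dim M_{\lambda_M})(q')^{\lambda_M-c/24}$. Among the ordinary modules the dominant term is $M^0=V$, with $\lambda_0=0$ and $\dim V_0=1$ by (V1) and (V3), so $Z_V(1,-1/\tau)\sim S_{00}(q')^{-c/24}$ with $S_{00}>0$; on the other hand every $g^{\pm1}$-twisted irreducible has conformal weight $\lambda_M>0$ by hypothesis (here $g\neq1$), so $Z_V(g,-1/\tau)=O\big((q')^{\mu-c/24}\big)$ for some $\mu>0$. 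Dividing, $Z_V(g,iy)/Z_V(1,iy)=O\big((q')^{\mu}\big)\to0$, which proves the claim.

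Granting the claim, the conclusion is immediate: as $q\to1^-$ the numerator above is asymptotic to $\overline{\chi(1)}\,Z_V(1,\tau)$ and the denominator to $Z_V(1,\tau)$, so $\qdim_{V^G}V_\chi=\overline{\chi(1)}=\chi(1)=\dim W_\chi$ (and in particular $\qdim_{V^G}V=|G|$). I expect the only genuine obstacles to lie inside the key step: (i) invoking the orbifold modular-invariance machinery correctly --- this is the substantive input, and it is the reason $g$-rationality is assumed; and (ii) running the $q\to1$ asymptotics, for which the two facts needed are the strict positivity $S_{00}>0$ (so $Z_V(1,iy)$ really has leading order $(q')^{-c/24}$ with a nonzero, single-term leading coefficient, with no possible cancellation) and the strict positivity of the twisted conformal weights $\lambda_M$ (so that every term of $Z_V(g,-1/\tau)$ is of strictly smaller order). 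The bookkeeping of the first paragraph is routine.
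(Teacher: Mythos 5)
The paper offers no proof of this statement---it is quoted directly from \cite{DJX1}---and your argument is correct and is essentially the proof given in that reference: the projection formula reduces everything to the vanishing of $Z_V(g,iy)/Z_V(1,iy)$ as $y\to 0^+$ for $g\ne 1$, which is exactly where the hypotheses of $g$-rationality and positive twisted conformal weights enter, via the $S$-transformation and the positivity of $S_{00}$. One small correction: the orbifold modular-invariance theorem you need is from \cite{DLM3} (modular invariance of trace functions in orbifold theory), not \cite{DLM0}.
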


The next result tells us how the rationality of $V^G$ implies the $g$-rationality of all $ g\in G.$
\begin{lem}\label{general} Let $V$ be a simple vertex operator algebra and $G$ a finite automorphism group of $V$
such that both $V$ and $V^G$ satisfies assumptions (V1)-(V3). Then $V$ is $g$-rational for all $g\in G$
and the conformal weight of any irreducible $g$-twisted $V$-module is positive.
\end{lem}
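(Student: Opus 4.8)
The plan is to obtain both conclusions from the orbifold representation theory of regular vertex operator algebras together with hypothesis (V3) for $V^G$; as usual the positivity statement is meant for $g\neq 1,$ since the untwisted module $V$ itself has conformal weight $0.$ Fix $g\in G$ of order $T$ and set $H=\langle g\rangle.$ Since $V$ is simple and $H$ is finite, $V^H$ is a simple vertex operator algebra with the same conformal vector as $V$ (\cite{DM}) and is a simple extension of $V^G,$ hence rational by Theorem \ref{HKL} and $C_2$-cofinite by \cite{ABD}; it is of CFT type because $(V^H)_0=(V^G)_0=\C\1,$ and it satisfies (V3) because an irreducible $V^H$-module is finitely generated over $V^G$ and therefore a finite direct sum of irreducible $V^G$-modules, each of conformal weight $\geq 0$ by (V3) for $V^G,$ with a weight-zero summand forcing the $V^H$-module to equal $V^H.$ Thus $V^{\langle g\rangle}$ is again regular satisfying (V1)--(V3).

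For $g$-rationality I would invoke the machinery relating $g$-twisted $V$-modules to ordinary $V^H$-modules. A $g$-twisted $V$-module restricts to an honest (untwisted) $V^H$-module, since for $v\in V^H$ the twisted field $Y_M(v,z)$ involves only integral powers of $z;$ conversely the regularity of $V^H$ and the $H$-equivariant structure pin down the category of $g$-twisted $V$-modules. This is precisely the circle of ideas of \cite{CM} and \cite{M} (together with \cite{DM}, \cite{DLM0}); applying those results gives that $V$ is $g$-rational, and in particular that there are only finitely many irreducible $g$-twisted $V$-modules, each an ordinary module with a well-defined conformal weight.

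For positivity, let $M$ be an irreducible $g$-twisted $V$-module with $g\neq 1$ and suppose its conformal weight $\lambda_M$ satisfies $\lambda_M\leq 0.$ Restricting $M$ to $V^G$ (again only integral powers of $z$ occur) yields an ordinary $V^G$-module, which by rationality of $V^G$ is a finite direct sum of irreducible $V^G$-modules; since every element of $M$ has conformal weight $\geq\lambda_M$ while $M_{\lambda_M}\neq 0,$ one summand has conformal weight exactly $\lambda_M\leq 0,$ so by (V3) for $V^G$ that summand is the vacuum module $V^G$ and $\lambda_M=0.$ On the other hand, among untwisted $V$-modules the vacuum $V^G$-module occurs as a $V^G$-constituent only of $V$ itself, since any other irreducible untwisted $V$-module has conformal weight $>0$ by (V3) for $V$ and hence no constituent of conformal weight $0.$ By the orbifold classification of irreducible $V^G$-modules according to twisted sectors (\cite{DLM0}, using that $V$ is $g$-rational for every $g\in G$ and that $V^G$ is rational), the vacuum $V^G$-module belongs only to the untwisted sector, so $g$-twisted modules with $g\neq1$ cannot contain it; this contradiction shows $\lambda_M>0.$ \textbf{The main obstacle is the $g$-rationality step:} it is not formal from the rationality of $V$ and of $V^G$ and genuinely relies on the transfer of regularity between $V$ and the cyclic orbifold $V^{\langle g\rangle},$ and one must invoke the twisted-sector classification of irreducible $V^G$-modules carefully so that the deduction of positivity is not circular.
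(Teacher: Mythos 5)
Your setup is right (reducing to the cyclic orbifold $V^{\langle g\rangle}$, which is an extension of $V^G$ and hence rational by Theorem \ref{HKL} and $C_2$-cofinite by \cite{ABD}), but both of the steps you yourself flag as delicate are genuine gaps, and in each case the paper closes them by a different, more concrete mechanism. For $g$-rationality, citing \cite{CM} and \cite{M} does not work: those results go in the opposite direction (from properties of $V$ to properties of $V^G$), and the observation that a $g$-twisted module restricts to an ordinary $V^{\langle g\rangle}$-module does not by itself give complete reducibility of admissible $g$-twisted modules. The actual argument is via the twisted Zhu algebras of \cite{DLM4}, \cite{DLM5}: $V$ is $g$-rational if and only if all $A_{g,n}(V)$ are semisimple, and there is a surjective algebra homomorphism $A_{[n]}(V^{\langle g\rangle})\to A_{g,n}(V)$; since $V^{\langle g\rangle}$ is rational, each $A_m(V^{\langle g\rangle})$ is semisimple, hence so is each $A_{g,n}(V)$. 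Without naming this surjection (or an equivalent device), your $g$-rationality step is an appeal to authority for a statement the cited papers do not contain.

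For positivity, your reduction to the case $\lambda_M=0$ with a $V^G$-constituent isomorphic to $V^G$ is fine, but the final step invokes a ``twisted-sector classification of irreducible $V^G$-modules'' that is not in \cite{DLM0} and whose standard proofs presuppose exactly the positivity you are trying to establish --- so as written the argument is circular, as you suspected. The paper finishes elementarily instead: if $W\subset M$ is a $V^G$-submodule isomorphic to $V^G$ and $V=\bigoplus_s V^s$ is the decomposition into irreducible $V^G$-modules, then $M=\sum_s V^s\cdot W$ with each $V^s\cdot W\cong V^s$ as $V^G$-modules, so all $L(0)$-eigenvalues of $M$ are integral; this contradicts the fact that for an irreducible $g$-twisted module with $g$ of order $T\neq 1$ every congruence class $\lambda+\tfrac{s}{T}+\Z$, $s=0,\dots,T-1$, carries a nonzero weight space. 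Replacing your classification appeal by this weight-congruence argument removes the circularity.
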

\begin{proof} Since $V^G$ satisfies assumptions (V1)-(V3), the vertex operator subalgebra $V^{\<g\>}$ of $V$
is $C_2$-cofinite \cite{ABD} and rational \cite{HKL}. Here we need some facts about associative algebra $A_{g,n}(V)$ for $g\in G$ and $0\leq n\in \frac{1}{T}\Z$  from \cite{DLM4}, \cite{DLM5} where $T$ is the order
of $g.$ Let $A_m(V)=A_{1,m}(V).$ The following are true: (1) $V$ is $g$-rational if and only if $A_{g,n}(V)$
is semisimple for all $n,$ (2) There is an onto algebra homomorphism from $A_{[n]}(V^{\<g\>})$ to $A_{g,n}(V)$
where $[n]$ is the largest integer less than or equal to $n.$ Since $V^{\<g\>}$ is rational, $A_m(V^{\<g\>})$ is semisimple for all $m$. Thus, $A_{g,n}(V)$ is semisimple for
all $n$ and $V$ is $g$-rational.

It remains to prove that the conformal weight $\lambda$ of any irreducible $g$-twisted $V$-module $M$ is positive if $g\ne 1.$ Let $T$ be the order of $g.$ Then $M$ has decomposition
$$M=\bigoplus_{n=0}^{\infty}M_{\lambda+\frac{n}{T}}$$
such that $M_{\lambda}\ne 0$ where $ M_{\lambda+\frac{n}{T}}$ is the eigenspace of $L(0)$ with eigenvalue $\lambda+\frac{n}{T}$ \cite{DLM2}. Moreover, for each $s=0,...,T-1$ the subspace $\bigoplus_{n\in\Z}M_{\lambda+n+\frac{s}{T}}\ne 0.$ If $\lambda=0$ then there exists an irreducible
$V^G$-submodule $W$ of $M$ isomorphic to $V^G$ as $V^G$ is the only irreducible $V^G$-module whose conformal weight is $0.$ Let $V=\bigoplus_{s}V^s$ where $V^s$ is the irreducible $V^G$-module. Then
$M=\sum_{s}V^s\cdot W$ where $V^s\cdot W$ is a subspace spanned by $u_nW$ for $u\in V^s$ and $n\in\Z.$
It is easy to see that  $V^s\cdot W$ is isomorphic to $V^s$ as $V^G$-module. This implies that $M$ has only integral weights. This is a contradiction as $T\ne 1.$
\end{proof}

Recall the irreducible $K(\g,k)$-module $M^{0,\beta_i}$ for $i\in Q/kQ_L$ from Section 3.
\begin{lem}\label{qtensor1}
The $M^{0,\beta_i}$ is a simple current.
\end{lem}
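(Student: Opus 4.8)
The plan is to show $M^{0,\beta_i}$ is a simple current by establishing that $\qdim_{K(\g,k)}M^{0,\beta_i}=1$ and then invoking Theorem \ref{DJX1}(4). The natural route is to exploit the decomposition of $L_{\wg}(k,0)$ as a $V_{\sqrt{k}Q_L}\otimes K(\g,k)$-module, namely
$$L_{\wg}(k,0)=\bigoplus_{i\in Q/kQ_L}V_{\sqrt{k}Q_L+\frac{1}{\sqrt{k}}\beta_i}\otimes M^{0,\beta_i},$$
and the fact that each summand $V_{\sqrt{k}Q_L+\frac{1}{\sqrt{k}}\beta_i}\otimes M^{0,\beta_i}$ is an irreducible $V_{\sqrt{k}Q_L}\otimes K(\g,k)$-module. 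Since $L_{\wg}(k,0)$ is an extension of $V_{\sqrt{k}Q_L}\otimes K(\g,k)$, Theorem \ref{DMNO1} together with Lemma \ref{qtensor} gives
$$\mathrm{glob}(V_{\sqrt{k}Q_L})\,\mathrm{glob}(K(\g,k))=\mathrm{glob}\bigl(V_{\sqrt{k}Q_L}\otimes K(\g,k)\bigr)=\mathrm{glob}(L_{\wg}(k,0))\bigl(\qdim_{V_{\sqrt{k}Q_L}\otimes K(\g,k)}L_{\wg}(k,0)\bigr)^2,$$
which controls the sum of squares of quantum dimensions, but this alone does not isolate a single $\qdim M^{0,\beta_i}$.

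A cleaner approach: view $L_{\wg}(k,0)$ as a $G$-orbifold-type situation, or better, directly apply Theorem \ref{DJX1}(3) and the multiplicativity of quantum dimension under the extension. Specifically, I would first show that $V_{\sqrt{k}Q_L+\frac{1}{\sqrt{k}}\beta_i}$ is a simple current for the lattice VOA $V_{\sqrt{k}Q_L}$ — this is standard: for a positive-definite even lattice $L$, every irreducible $V_L$-module $V_{L+\mu}$ (for $\mu\in L^\circ/L$) is a simple current, with $\qdim_{V_L}V_{L+\mu}=1$, since the fusion rules of a lattice VOA are those of the abelian group $L^\circ/L$. Hence $\qdim_{V_{\sqrt{k}Q_L}}V_{\sqrt{k}Q_L+\frac{1}{\sqrt{k}}\beta_i}=1$. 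Next, since $L_{\wg}(k,0)$ is a simple current extension — in fact each graded piece $V_{\sqrt{k}Q_L+\frac{1}{\sqrt{k}}\beta_i}\otimes M^{0,\beta_i}$ is an irreducible module for the sub-VOA, and these are indexed by $Q/kQ_L$ — one can argue that the restriction of the simple current $V_{\sqrt{k}Q_L+\frac{1}{\sqrt{k}}\beta_i}\otimes M^{0,\beta_i}$ (as a module over itself inside $L_{\wg}(k,0)$) forces $M^{0,\beta_i}$ to be a simple current over $K(\g,k)$.

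More concretely, the key computation is: using $\qdim_{V^1\otimes V^2}(M\otimes N)=\qdim_{V^1}M\cdot\qdim_{V^2}N$ from Lemma \ref{qtensor}, and the fact that $L_{\wg}(k,0)$ as a module over itself has quantum dimension $1$ (it is the unit), one needs the relation between $\qdim$ over $L_{\wg}(k,0)$ and $\qdim$ over the sub-VOA $U:=V_{\sqrt{k}Q_L}\otimes K(\g,k)$. By Theorem \ref{DJX1}(3) applied inside $\cc_U$ and the fact that the summands $V_{\sqrt{k}Q_L+\frac{1}{\sqrt{k}}\beta_i}\otimes M^{0,\beta_i}$ close under fusion (they are the image of the group $Q/kQ_L$ under the simple current action of $L_{\wg}(k,0)$ on itself), each such summand is a simple current over $U$, i.e. $\qdim_U\bigl(V_{\sqrt{k}Q_L+\frac{1}{\sqrt{k}}\beta_i}\otimes M^{0,\beta_i}\bigr)=1$. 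Combined with $\qdim_{V_{\sqrt{k}Q_L}}V_{\sqrt{k}Q_L+\frac{1}{\sqrt{k}}\beta_i}=1$ and Lemma \ref{qtensor}, this yields $\qdim_{K(\g,k)}M^{0,\beta_i}=1$, and Theorem \ref{DJX1}(4) finishes the proof.

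The main obstacle is justifying that each summand $V_{\sqrt{k}Q_L+\frac{1}{\sqrt{k}}\beta_i}\otimes M^{0,\beta_i}$ really is a simple current for $U=V_{\sqrt{k}Q_L}\otimes K(\g,k)$ — equivalently, that the subset of $\cc_U$ appearing in the decomposition of $L_{\wg}(k,0)$ consists of simple currents forming a group isomorphic to $Q/kQ_L$. The natural way to see this is that $L_{\wg}(k,0)$, being a holomorphic-type simple current extension (its graded pieces are parametrized by the abelian group $Q/kQ_L$ acting by fusion, and the extension is simple so these pieces are invertible in $\cc_U$), automatically has all its homogeneous components invertible objects, hence of quantum dimension $1$. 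Making this rigorous may require either a direct appeal to the simple-current structure of $L_{\wg}(k,0)$ over $L_{\wg}(k,0)\supset V_{\sqrt{k}Q_L}$ (via the $L_{\wg}(k,k\Lambda_i)$ simple currents and the lattice construction in \cite{DW3}), or observing that an invertible object of $\cc_{L_{\wg}(k,0)}$ restricted to $\cc_U$ remains invertible because restriction is a tensor functor and the relevant component is a direct summand of the unit. I would prove it via this invertibility/tensor-functor argument, which avoids any case analysis.
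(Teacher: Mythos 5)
Your overall strategy is the right one and matches the paper's: reduce to showing $\qdim_{K(\g,k)}M^{0,\beta_i}=1$ and invoke Theorem \ref{DJX1}(4), via the intermediate claim that each summand $V_{\sqrt{k}Q_L+\frac{1}{\sqrt{k}}\beta_i}\otimes M^{0,\beta_i}$ has quantum dimension $1$ over $U:=V_{\sqrt{k}Q_L}\otimes K(\g,k)$, combined with Lemma \ref{qtensor} and the fact that irreducible $V_{\sqrt{k}Q_L}$-modules are simple currents. But the justification you offer for the intermediate claim --- the one you yourself flag as ``the main obstacle'' --- does not hold up. The assertion that ``an invertible object of $\cc_{L_{\wg}(k,0)}$ restricted to $\cc_U$ remains invertible because restriction is a tensor functor'' is false on two counts: the restriction functor $\cc_V\to\cc_U$ for a vertex operator subalgebra $U\subset V$ is not monoidal (the monoidal functor goes the other way, by induction), and in any case the objects in question are not restrictions of invertible objects of $\cc_{L_{\wg}(k,0)}$ --- they are the simple constituents of the restriction of the unit object $L_{\wg}(k,0)$ itself, whose total quantum dimension over $U$ is $|Q/kQ_L|>1$ in general, so the restricted unit is visibly not invertible. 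Your other suggested route, that the summands ``close under fusion'' because $X_i\cdot X_j\subseteq X_{i+j}$ inside $L_{\wg}(k,0)$, only shows that $X_{i+j}$ occurs in $X_i\boxtimes_U X_j$ with multiplicity at least one; it does not rule out other constituents, which is exactly what invertibility requires. In short, you are assuming $L_{\wg}(k,0)$ is a simple current extension of $U$, which is equivalent to what must be proved.

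The paper closes this gap with orbifold theory rather than extension theory: the dual group $G$ of the finite abelian group $Q/kQ_L$ acts on $L_{\wg}(k,0)$ with $L_{\wg}(k,0)^G=V_{\sqrt{k}Q_L}\otimes K(\g,k)$, and each summand $V_{\sqrt{k}Q_L+\frac{1}{\sqrt{k}}\beta_i}\otimes M^{0,\beta_i}$ is the component $V_\chi$ attached to the character $\chi=\beta_i+kQ_L$. Theorem \ref{quan} then gives $\qdim_{U}V_\chi=\dim W_\chi=1$ since $G$ is abelian; its hypotheses ($g$-rationality and positivity of twisted conformal weights) are supplied by Lemma \ref{general} (or by the $g$-rationality of $L_{\wg}(k,0)$ from \cite{L2}). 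If you want to rescue your argument, replace the tensor-functor claim with this appeal to Theorem \ref{quan} and Lemma \ref{general}; the remainder of your reduction then goes through exactly as in the paper.
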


\begin{proof}   We need to introduce a finite abelain group $G$ following \cite{DR}.  Let $G$ be the dual group of the finite abelian group $Q/kQ_L.$ Then $G$ is a group
of automorphisms of  $L_{\wg}(k,0)$ such that $g\in G$ acts as $g(\beta_i+kQ_L)$ on $V_{\sqrt{k}Q_L+\frac{1}{\sqrt{k}}\beta_i}\otimes M^{0,\beta_i}.$  Clearly, each $\beta_i+kQ_L$
is an irreducible character of $G.$  So $V_{\sqrt{k}Q_L+\frac{1}{\sqrt{k}}\beta_i}\otimes M^{0, \beta_i}$ in the decomposition
$$L_{\wg}(k,0)=\bigoplus_{i\in Q/kQ_L}V_{\sqrt{k}Q_L+\frac{1}{\sqrt{k}}\beta_i}\otimes M^{0, \beta_i}$$
corresponds to the character $\beta_i+kQ_L.$
In particular, $L_{\wg}(k,0)^G=V_{\sqrt{k}Q_L}\otimes K(\g,k).$

It follows from Theorem \ref{quan} and Lemma \ref{general},  $\qdim_{V_{\sqrt{k}Q_L}\otimes K(\g,k)}V_{\sqrt{k}Q_L+\frac{1}{\sqrt{k}}\beta_i}\otimes M^{0, \beta_i}=1.$ We can also use the $g$-rationality
of $L_{\wg}(k,0)$ from \cite{L2}. It is well known that every irreducible $V_{\sqrt{k}Q_L}$-module is a simple current \cite{DL}. Then by Lemma \ref{qtensor},
$\qdim_{K(\g,k)}M^{0, \beta_i}=1.$ Since $K(\g,k)$ satisfies conditions (V1)-(V3). It follows that
$M^{0, \beta_i}$ is a simple current.
\end{proof}

One can also obtain Lemma \ref{qtensor1} by using the mirror extension \cite{Lin}.

The next result asserts that all the irreducible $K(\g,k)$-modules occurring in $L_{\wg}(k,\Lambda)$
for $\Lambda\in P_+^k$ have the same quantum dimension.
\begin{lem}\label{ij}
Let $\Lambda\in P_+^k$ Then $\qdim_{K(g,k)}M^{\Lambda, \lambda}= \qdim_{K(g,k)}M^{\Lambda, \Lambda}$
for all $\lambda\in \Lambda+Q.$
\end{lem}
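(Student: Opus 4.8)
The plan is to reduce the claim to the already-established simple-current property of $M^{0,\beta_i}$ (Lemma \ref{qtensor1}) together with the multiplicativity of quantum dimensions under fusion (Theorem \ref{DJX1}(3)). The key point is that the modules $M^{\Lambda,\lambda}$ with $\lambda$ ranging over $\Lambda+Q$ differ from $M^{\Lambda,\Lambda}$ only by tensoring (fusing) with one of the simple currents $M^{0,\beta_i}$. First I would recall from Section 3 the decomposition of $L_{\wg}(k,\Lambda)$ as a $V_{\sqrt kQ_L}\otimes K(\g,k)$-module and, more relevantly, the finer decomposition $L_{\wg}(k,\Lambda)=\bigoplus_{\lambda\in Q+\Lambda}M_{\wh}(k,\lambda)\otimes M^{\Lambda,\lambda}$ as $M_{\wh}(k)\otimes K(\g,k)$-module, together with the fact from Theorem \ref{DR}(2) that $M^{\Lambda,\lambda}$ depends on $\lambda$ only through its image in $(\Lambda+Q)/kQ_L$.

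Next I would exhibit, for any $\lambda\in\Lambda+Q$, an intertwining operator of type $\Itw{M^{\Lambda,\lambda}}{M^{0,\beta_i}}{M^{\Lambda,\Lambda}}$ where $\beta_i$ is chosen so that $\lambda\equiv \Lambda+\beta_i \pmod{kQ_L}$; equivalently, I would argue that the fusion product $M^{0,\beta_i}\boxtimes M^{\Lambda,\Lambda}$ contains (indeed equals, since $M^{0,\beta_i}$ is a simple current) $M^{\Lambda,\lambda}$. The cleanest way to produce this intertwining operator is to restrict a suitable intertwining operator among $L_{\wg}(k,0)$- or $V_{\sqrt kQ_L}\otimes K(\g,k)$-modules: inside $L_{\wg}(k,0)$ the subspace $V_{\sqrt kQ_L+\frac1{\sqrt k}\beta_i}\otimes M^{0,\beta_i}$ acts on $L_{\wg}(k,\Lambda)=\bigoplus_j V_{\sqrt kQ_L+\frac1{\sqrt k}(\Lambda+\beta_j)}\otimes M^{\Lambda,\Lambda+\beta_j}$ by the module vertex operator, and projecting onto the appropriate graded pieces and using Proposition \ref{adl2} to split off the Heisenberg/lattice tensor factor yields a nonzero intertwining operator of type $\Itw{M^{\Lambda,\Lambda+\beta_i}}{M^{0,\beta_i}}{M^{\Lambda,\Lambda}}$. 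Hence $M^{\Lambda,\lambda}=M^{0,\beta_i}\boxtimes M^{\Lambda,\Lambda}$ as $K(\g,k)$-modules, using that $M^{0,\beta_i}$ is a simple current so the fusion product is irreducible and must be the unique constituent detected by the nonzero intertwining operator.

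Finally, applying Theorem \ref{DJX1}(3) and Lemma \ref{qtensor1},
$$\qdim_{K(\g,k)}M^{\Lambda,\lambda}=\qdim_{K(\g,k)}\bigl(M^{0,\beta_i}\boxtimes M^{\Lambda,\Lambda}\bigr)=\qdim_{K(\g,k)}M^{0,\beta_i}\cdot\qdim_{K(\g,k)}M^{\Lambda,\Lambda}=\qdim_{K(\g,k)}M^{\Lambda,\Lambda},$$
which is the assertion. I expect the main obstacle to be the second step: rigorously producing the nonzero intertwining operator of type $\Itw{M^{\Lambda,\Lambda+\beta_i}}{M^{0,\beta_i}}{M^{\Lambda,\Lambda}}$ and checking it does not vanish after projection — i.e. that $M^{\Lambda,\Lambda+\beta_i}$ genuinely occurs in $M^{0,\beta_i}\boxtimes M^{\Lambda,\Lambda}$ rather than some other constituent. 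This can be handled by tracking the $M_{\wh}(k)$-weights (or the $\sqrt kQ_L$-lattice cosets) through the decompositions, which pins down exactly which $K(\g,k)$-module appears; alternatively one may invoke the general principle that for a simple current $M^{0,\beta_i}$ the fusion $M^{0,\beta_i}\boxtimes M^{\Lambda,\Lambda}$ permutes the irreducibles, and identify the image by the conformal-weight/lattice-coset bookkeeping already set up in \cite{DR}.
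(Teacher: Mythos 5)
Your proposal is correct and follows essentially the same route as the paper: the paper likewise uses the action of the summand $V_{\sqrt{k}Q_L+\frac{1}{\sqrt{k}}\beta_j}\otimes M^{0,\beta_j}$ of $L_{\wg}(k,0)$ on the graded pieces of the irreducible module $L_{\wg}(k,\Lambda)$ to realize $M^{\Lambda,\Lambda+\beta_i+\beta_j}$ as the fusion of the simple current $M^{0,\beta_j}$ (Lemma \ref{qtensor1}) with $M^{\Lambda,\Lambda+\beta_i}$, and then concludes by multiplicativity of quantum dimensions. The only cosmetic difference is that the paper keeps the lattice tensor factor attached throughout and splits the quantum dimensions at the end via Lemma \ref{qtensor}, rather than first separating off a $K(\g,k)$-intertwining operator.
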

\begin{proof} By Theorem \ref{DR}, every $M^{\Lambda,\lambda}$ for $\lambda\in \Lambda+Q$ is isomorphic to
$M^{\Lambda,\Lambda+\beta_i}$ for some $i\in Q/kQ_L.$ So it is sufficient to show that all the
$M^{\Lambda,\Lambda+\beta_i}$ have the same quantum dimension.

Recall the decompositions
$$L_{\wg}(k,\Lambda)=\bigoplus_{i\in Q/kQ_L} V_{\sqrt{k}Q_L+\frac{1}{\sqrt{k}}(\Lambda+\beta_i)}\otimes M^{\Lambda, \Lambda+\beta_i}$$
 and
\[
L_{\wg}(k,0)=\bigoplus_{i\in Q/kQ_L}V_{\sqrt{k}Q_L+\frac{1}{\sqrt{k}}\beta_i}\otimes M^{0, \beta_i}.
\]
Since $L_{\wg}(k,\Lambda)$ is an irreducible $L_{\wg}(k,0)$-module,
we see that
 $$L_{\wg}(k,0) \cdot V_{\sqrt{k}Q_L+\frac{1}{\sqrt{k}}(\Lambda+\beta_i)}\otimes M^{\Lambda, \Lambda+\beta_i}=L_{\wg}(k,\Lambda)$$
 for any $i\in Q/kQ_L.$ Here we use $X\cdot W$ to denote the subspace spanned by $u_nW$
 for $u\in X$ and $n\in \Z$ where $X$ is a subspace of a vertex operator algebra and $W$ is a subset
 of a $V$-module. It follows that
\[
\bigoplus_{j\in Q/kQ_L}V_{\sqrt{k}Q_L+\frac{1}{\sqrt{k}}\beta_j}\otimes M^{0, \beta_j}\cdot V_{\sqrt{k}Q_L+\frac{1}{\sqrt{k}}(\Lambda+\beta_i)}\otimes M^{\Lambda, \Lambda+\beta_i}=L_{\wg}(k,\Lambda).
\]
This implies that
$$V_{\sqrt{k}Q_L+\frac{1}{\sqrt{k}}\beta_j}\otimes M^{0, \beta_j}\cdot V_{\sqrt{k}Q_L+\frac{1}{\sqrt{k}}(\Lambda+\beta_i)}\otimes M^{\Lambda, \Lambda+\beta_i}=V_{\sqrt{k}Q_L+\frac{1}{\sqrt{k}}(\Lambda+\beta_i+\beta_j)}\otimes M^{\Lambda, \Lambda+\beta_i+\beta_j}$$
for all $j.$ By Lemma \ref{qtensor1}, $V_{\sqrt{k}Q_L+\frac{1}{\sqrt{k}}\beta_j}\otimes M^{0, \beta_j}$
is a simple current. It follows from Theorem \ref{qtensor} that
$$\qdim_{K(g,k)}M^{\Lambda, \Lambda+\beta_i+\beta_j}=\qdim_{K(g,k)}M^{0, \beta_j} \cdot \qdim_{K(g,k)}M^{\Lambda, \Lambda+\beta_i}=\qdim_{K(g,k)}M^{\Lambda, \Lambda+\beta_i}.$$
The proof is complete.
\end{proof}

We now can give an explicit expression for the quantum dimension of any irreducible $K(\g,k)$-module $M^{\Lambda,\lambda}.$ Recall from \cite{C} that the quantum dimension $
\qdim_{L_{\wg}(k,0)}L_{\wg}(k,\Lambda)=\prod_{\alpha>0}\frac{(\Lambda+\rho,\alpha)_q}{(\rho,\alpha)_q}$
(see Introduction).
\begin{thm}\label{quanequal}
For any $\Lambda \in P^k_+$ and $\lambda\in \Lambda+Q,$
$$\qdim_{K(g,k)}M^{\Lambda, \lambda}= \qdim_{L_{\wg}(k,0)}L_{\wg}(k,\Lambda).$$
\end{thm}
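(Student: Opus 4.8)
The plan is to reduce the statement to a computation of quantum dimensions inside the two decompositions of $L_{\wg}(k,\Lambda)$ that were recalled in Section~3. By Lemma~\ref{ij} it suffices to treat $\lambda=\Lambda$, i.e.\ to show $\qdim_{K(\g,k)}M^{\Lambda,\Lambda}=\qdim_{L_{\wg}(k,0)}L_{\wg}(k,\Lambda)$; the stated value $\prod_{\alpha>0}(\Lambda+\rho,\alpha)_q/(\rho,\alpha)_q$ then follows from the result of \cite{C} quoted just above. So the real content is the first equality, and I would obtain it by comparing $L_{\wg}(k,0)$ and $L_{\wg}(k,\Lambda)$ over the common subalgebra $U:=V_{\sqrt{k}Q_L}\otimes K(\g,k)$.

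First I would fix a convenient component. Using the decomposition
$$L_{\wg}(k,\Lambda)=\bigoplus_{i\in Q/kQ_L}V_{\sqrt{k}Q_L+\frac{1}{\sqrt{k}}(\Lambda+\beta_i)}\otimes M^{\Lambda,\Lambda+\beta_i}$$
as a $U$-module, and the analogous one for $L_{\wg}(k,0)$, I note that each $V_{\sqrt{k}Q_L+\frac{1}{\sqrt{k}}(\Lambda+\beta_i)}$ is an irreducible $V_{\sqrt{k}Q_L}$-module, hence a simple current with $\qdim_{V_{\sqrt{k}Q_L}}=1$ by \cite{DL}. Therefore, by Lemma~\ref{qtensor},
$$\qdim_{U}\bigl(V_{\sqrt{k}Q_L+\frac{1}{\sqrt{k}}(\Lambda+\beta_i)}\otimes M^{\Lambda,\Lambda+\beta_i}\bigr)=\qdim_{K(\g,k)}M^{\Lambda,\Lambda+\beta_i},$$
and similarly the summands of $L_{\wg}(k,0)$ have $U$-quantum dimension $\qdim_{K(\g,k)}M^{0,\beta_i}=1$ by Lemma~\ref{qtensor1}. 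Summing over $i\in Q/kQ_L$ and invoking the additivity of quantum dimension over direct sums together with Lemma~\ref{ij} (all the $M^{\Lambda,\Lambda+\beta_i}$ have the same quantum dimension), I get
$$\qdim_{U}L_{\wg}(k,\Lambda)=|Q/kQ_L|\cdot\qdim_{K(\g,k)}M^{\Lambda,\Lambda},\qquad \qdim_{U}L_{\wg}(k,0)=|Q/kQ_L|.$$

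Next I would relate $\qdim_U$ to $\qdim_{L_{\wg}(k,0)}$. The point is that quantum dimension of a module is read off from the growth of its $q$-character as $q\to1$, which is intrinsic to the module's graded dimension and does not depend on which of two vertex operator subalgebras (with the same Virasoro element) one restricts to; more precisely, for a module $M$ of $L_{\wg}(k,0)$ one has $\ch_q M/\ch_q L_{\wg}(k,0)=\ch_q M/\ch_q L_{\wg}(k,0)$ computed identically whether $M$ is viewed over $L_{\wg}(k,0)$ or, after restriction, over $U$ (since $U$ and $L_{\wg}(k,0)$ share the conformal vector and $U\subseteq L_{\wg}(k,0)$ is a full-rank extension in the categorical sense). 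Hence $\qdim_{U}L_{\wg}(k,\Lambda)=\qdim_{U}L_{\wg}(k,0)\cdot\qdim_{L_{\wg}(k,0)}L_{\wg}(k,\Lambda)$, because both sides equal $\lim_{q\to1}\ch_q L_{\wg}(k,\Lambda)/\ch_q U$. Combining this with the two displayed equalities above, the factor $|Q/kQ_L|$ and the factor $\qdim_U L_{\wg}(k,0)=|Q/kQ_L|$ cancel, yielding $\qdim_{K(\g,k)}M^{\Lambda,\Lambda}=\qdim_{L_{\wg}(k,0)}L_{\wg}(k,\Lambda)$, as desired; then Lemma~\ref{ij} extends it to all $\lambda\in\Lambda+Q$.

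The main obstacle I expect is making the ``restriction is harmless'' step fully rigorous: one must justify that $\lim_{q\to1}\ch_q L_{\wg}(k,\Lambda)/\ch_q\bigl(V_{\sqrt{k}Q_L}\otimes K(\g,k)\bigr)$ exists and factors as claimed, i.e.\ that $\qdim_U$ is multiplicative along the chain $K(\g,k)\subseteq U\subseteq L_{\wg}(k,0)$. A clean way to do this is via Theorem~\ref{DMNO1}: $L_{\wg}(k,0)$ is an extension of $U$, so $\mathrm{glob}(U)=\mathrm{glob}(L_{\wg}(k,0))(\qdim_U L_{\wg}(k,0))^2$, and the associated \'etale algebra $A=L_{\wg}(k,0)$ in $\cc_U$ has the property that induction $\cc_U\to\cc_A=\cc_{L_{\wg}(k,0)}$ multiplies Frobenius--Perron dimension by $\qdim_U L_{\wg}(k,0)$ while the restriction functor is its adjoint; tracking $\FPdim$ through induction/restriction for the simple object $L_{\wg}(k,\Lambda)$ gives exactly $\qdim_{U}L_{\wg}(k,\Lambda)=\qdim_U L_{\wg}(k,0)\cdot\qdim_{L_{\wg}(k,0)}L_{\wg}(k,\Lambda)$ once one observes that $L_{\wg}(k,\Lambda)$, viewed in $\cc_U$, restricts from the simple $A$-module $L_{\wg}(k,\Lambda)$ and contains each $U$-isotypic component once up to the simple-current twists above. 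With that identity in hand the rest is the bookkeeping already carried out.
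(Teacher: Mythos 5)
Your proposal is correct and follows essentially the same route as the paper: both proofs rest on the two decompositions of $L_{\wg}(k,\Lambda)$ and $L_{\wg}(k,0)$ over $V_{\sqrt{k}Q_L}\otimes K(\g,k)$, the simple-current (quantum dimension one) property of the lattice modules and of $M^{0,\beta_i}$ from Lemma \ref{qtensor1}, Lemma \ref{ij}, and the factorization of $\ch_q L_{\wg}(k,\Lambda)/\ch_q L_{\wg}(k,0)$ obtained by dividing numerator and denominator by $\ch_q V_{\sqrt{k}Q_L}\cdot\ch_q K(\g,k)$. The ``restriction is harmless'' step you flag as the main obstacle is exactly this character-ratio manipulation, which the paper carries out directly and which needs no appeal to Theorem \ref{DMNO1}.
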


\begin{proof}
The proof is a straightforward computation by noting that the irreducible modules of $V_{\sqrt{k}Q_L}$ are simple currents:
\begin{equation*}
\begin{split}
&\ \ \qdim_{L_{\wg}(k,0)} L_{\wg}(k,\Lambda)=\lim_{q\to 1}\frac{\ch_qL_{\wg}(k,\Lambda)}{\ch_qL_{\wg}(k,0)}\\
& =\lim_{q\to 1}\frac{\sum_{i\in Q/kQ_L} \ch_q V_{\sqrt{k}Q_L+\frac{1}{\sqrt{k}}(\Lambda+\beta_i)}\cdot \ch_q M^{\Lambda, \Lambda+\beta_i}}{\sum_{i\in Q/kQ_L} \ch_q V_{\sqrt{k}Q_L+\frac{1}{\sqrt{k}}\beta_i}\cdot \ch_q M^{0, \beta_i}}\\
&=\lim_{q\to 1}\frac{\big(\sum_{i\in Q/kQ_L} \ch_q V_{\sqrt{k}Q_L+\frac{1}{\sqrt{k}}(\Lambda+\beta_i)}\cdot \ch_q M^{\Lambda, \Lambda+\beta_i}\big) / \big(\ch_q V_{\sqrt{k}Q_L} \cdot \ch_q K(\g,k)\big)}
{\big( \sum_{i\in Q/kQ_L} \ch_q V_{\sqrt{k}Q_L+\frac{1}{\sqrt{k}}\beta_i}\cdot \ch_q M^{0, \beta_i}\big) / \big(\ch_q V_{\sqrt{k}Q_L} \cdot \ch_q K(\g, k)\big)}\\
&=\frac{\sum_{i\in Q/kQ_L}\lim_{q\to 1}\frac{ \ch_q V_{\sqrt{k}Q_L+\frac{1}{\sqrt{k}}(\Lambda+\beta_i)}}{ch_q V_{\sqrt{k}Q_L}}\lim_{q\to 1}\frac{\ch_q M^{\Lambda, \Lambda+\beta_i}}{\ch_q K(\g,k)}}
{\sum_{i\in Q/kQ_L}\lim_{q\to 1}\frac{\ch_q V_{\sqrt{k}Q_L+\frac{1}{\sqrt{k}}\beta_i}}{\ch_q V_{\sqrt{k}Q_L}}\lim_{q\to 1}\frac{\ch_q M^{0, \beta_i}}{\ch_q K(\g, k)}}\\
&=\frac{\sum_{i\in Q/kQ_L}\lim_{q\to 1}\frac{\ch_q M^{\Lambda, \Lambda+\beta_i}}{\ch_q K(\g,k)}} {|Q/kQ_L|}\\
&=\frac{|Q/kQ_L|\cdot \qdim _{K(\g,k)}M^{\Lambda, \Lambda+\beta_i}}{|Q/kQ_L|}\\
&=\qdim _{K(\g,k)}M^{\Lambda, \Lambda+\beta_i}
\end{split}
\end{equation*}
for any $i\in Q/kQ_L.$
\end{proof}

We remark that one can also use the $S$-matrix given in \cite{K} to compute the quantum dimension of $M^{\Lambda,\lambda}$ from the definition. But it will be very complicated as we do not have a complete classification of the irreducible $K(\g,k)$-modules at this point.

For an arbitrary simple vertex operator algebra $V$ and  a finite automorphism group $G$ such that $V^G$ is regular,
the quantum dimensions of irreducible $V^G$-modules are determined in terms of the quantum dimensions
of $g$ twisted modules for $g\in G$ recently in \cite{DRX}. Theorem \ref{quanequal} also follows from these results easily.

\section{Classification of the irreducible modules and the fusion rules}

In this section, we classify the irreducible $K(\g,k)$-modules and determine the fusion rules.

By Theorem \ref{DR}, there are at most $\frac{|P^k_+||Q/kQ_L|}{|P/Q|}$  inequivalent irreducible
$K(\g,k)$-modules.  We prove that there are exactly $\frac{|P^k_+||Q/kQ_L|}{|P/Q|}$ inequivalent irreducible
$K(\g,k)$-modules. Namely, the identification given in \cite{DR} is complete.

\begin{thm}\label{idenm} Let $\g$ be a simple Lie algebra and $k$ a positive integer. Then
there are exactly $\frac{|P^k_+||Q/kQ_L|}{|P/Q|}$ inequivalent irreducible  $K(\g,k)$-modules.
\end{thm}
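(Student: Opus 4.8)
The plan is to compute global dimensions on both sides of the decomposition
$$L_{\wg}(k,0)=\bigoplus_{i\in Q/kQ_L}V_{\sqrt{k}Q_L+\frac{1}{\sqrt{k}}\beta_i}\otimes M^{0,\beta_i}$$
and combine this with the counting bound from the Corollary. First I would recall that $V_{\sqrt{k}Q_L}\otimes K(\g,k)$ sits inside $L_{\wg}(k,0)$ as the fixed-point subalgebra under the finite abelian group $G$ dual to $Q/kQ_L$, so $L_{\wg}(k,0)$ is an extension of $V_{\sqrt{k}Q_L}\otimes K(\g,k)$ of the type treated in Theorem \ref{DMNO1}. Applying that theorem gives
$${\rm glob}(V_{\sqrt{k}Q_L}\otimes K(\g,k))={\rm glob}(L_{\wg}(k,0))\cdot\big(\qdim_{V_{\sqrt{k}Q_L}\otimes K(\g,k)}L_{\wg}(k,0)\big)^2,$$
and since by Theorem \ref{quan}, Lemma \ref{general} and Lemma \ref{qtensor1} each summand $V_{\sqrt{k}Q_L+\frac{1}{\sqrt{k}}\beta_i}\otimes M^{0,\beta_i}$ is a simple current (quantum dimension $1$), the extension index is $\qdim_{V_{\sqrt{k}Q_L}\otimes K(\g,k)}L_{\wg}(k,0)=|Q/kQ_L|.$ By Lemma \ref{qtensor}, ${\rm glob}(V_{\sqrt{k}Q_L}\otimes K(\g,k))={\rm glob}(V_{\sqrt{k}Q_L})\cdot{\rm glob}(K(\g,k))$, and ${\rm glob}(V_{\sqrt{k}Q_L})=|Q_L^{\circ}/\sqrt{k}Q_L|$ — using Lemma \ref{QLP} this is $|P/Q_L|$ with the appropriate scaling, i.e. $|Q_L^{\circ}/kQ_L|=k^l|P/Q_L|$ in the rescaled lattice, but what matters is that all these are known finite numbers since every $V_{\sqrt{k}Q_L}$-module is a simple current. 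Putting the pieces together yields an exact closed-form value for ${\rm glob}(K(\g,k))$ in terms of ${\rm glob}(L_{\wg}(k,0))$, $|Q/kQ_L|$ and $|P/Q|$ (the lattice bookkeeping should produce the factor $|P/Q|^{-2}$ from $|I|^2=|P/Q|^2$ matching simple-current orbits).

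Next I would use Theorem \ref{quanequal}: every irreducible $K(\g,k)$-module $M^{\Lambda,\lambda}$ has $\qdim_{K(\g,k)}M^{\Lambda,\lambda}=\qdim_{L_{\wg}(k,0)}L_{\wg}(k,\Lambda)$, which depends only on $\Lambda\in P_+^k$. Let $n$ be the (as yet unknown) number of inequivalent irreducible $K(\g,k)$-modules; by the Corollary, $n\le\frac{|P_+^k||Q/kQ_L|}{|P/Q|}$. By Lemma \ref{iden3} the list $\{M^{\Lambda,\Lambda+\beta_j}\mid\Lambda\in P_+^k,\ j\in Q/kQ_L\}$, which exhausts all irreducibles by Theorem \ref{DR}(4), collapses in groups of exactly $|I|=|P/Q|$ equivalent modules, so the list of candidate classes has size exactly $\frac{|P_+^k||Q/kQ_L|}{|P/Q|}$; the content of the theorem is that no two of these $\frac{|P_+^k||Q/kQ_L|}{|P/Q|}$ candidates further coincide. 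To force this I sum squared quantum dimensions: $\sum_{\text{irred }M}(\qdim_{K(\g,k)}M)^2={\rm glob}(K(\g,k))$. On the other hand, each class of candidates corresponding to a fixed $\Lambda$ (modulo the simple-current action by $M^{0,\beta_j}$, which preserves quantum dimension) contributes, and a direct count shows $\sum_{\Lambda\in P_+^k}\frac{|Q/kQ_L|}{|P/Q|}\big(\qdim_{L_{\wg}(k,0)}L_{\wg}(k,\Lambda)\big)^2=\frac{|Q/kQ_L|}{|P/Q|}\cdot{\rm glob}(L_{\wg}(k,0))$, which I must check equals the value of ${\rm glob}(K(\g,k))$ computed in the previous paragraph. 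If these two numbers agree, then the number of irreducibles weighted by squared quantum dimension already accounts for the full global dimension using the maximal possible number of classes; since each $(\qdim)^2\ge1$, any further coincidence among candidates would strictly decrease the left side below ${\rm glob}(K(\g,k))$, a contradiction. Hence $n=\frac{|P_+^k||Q/kQ_L|}{|P/Q|}$.

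The main obstacle I anticipate is the bookkeeping of lattice indices and the precise matching of the two expressions for ${\rm glob}(K(\g,k))$: one must be careful that ${\rm glob}(V_{\sqrt{k}Q_L})$, computed as the number of $V_{\sqrt{k}Q_L}$-modules (all simple currents, so contributing $1$ each), is $|Q_L^{\circ}/\sqrt k Q_L|$, and that $Q_L^{\circ}=P$ by Lemma \ref{QLP}, so that this index is $[P:Q_L]$ in the rescaled metric, equal to $k^l[P:Q_L]$ after scaling by $\sqrt k$. Then ${\rm glob}(L_{\wg}(k,0))=k^l[P:Q_L]\cdot{\rm glob}(K(\g,k))/|Q/kQ_L|^2$ must be reconciled with $|Q/kQ_L|=k^l[Q:Q_L]$ and $[P:Q]=[P:Q_L]/[Q:Q_L]$, after which the identity $\frac{|Q/kQ_L|}{|P/Q|}{\rm glob}(L_{\wg}(k,0))={\rm glob}(K(\g,k))$ should drop out. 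A cleaner route, avoiding explicit lattice arithmetic, is to invoke Theorem \ref{DMNO} directly: $L_{\wg}(k,0)$ is a connected étale algebra $A$ in $\cc_{V_{\sqrt kQ_L}\otimes K(\g,k)}$ with $\FPdim_{\cc}A=|Q/kQ_L|$, and $\cc_A^0$ is the module category of $L_{\wg}(k,0)$, giving $|Q/kQ_L|^2\cdot{\rm glob}(L_{\wg}(k,0))={\rm glob}(V_{\sqrt kQ_L}\otimes K(\g,k))={\rm glob}(V_{\sqrt kQ_L}){\rm glob}(K(\g,k))$; combined with ${\rm glob}(V_{\sqrt kQ_L})=|Q_L^{\circ}/\sqrt kQ_L|$ and Lemma \ref{QLP}, and with the observation that $|I|=|P/Q|$, this yields the needed equality. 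The remaining step — that equality of weighted module counts forces the candidate list to be free of further identifications — is then immediate from $\qdim\ge1$ and Theorem \ref{DJX1}(1).
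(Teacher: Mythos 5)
Your proposal is correct and follows essentially the same route as the paper's proof: you compute ${\rm glob}(K(\g,k))$ from the decomposition of $L_{\wg}(k,0)$ over $V_{\sqrt{k}Q_L}\otimes K(\g,k)$ via Theorem \ref{DMNO1} (equivalently Theorem \ref{DMNO}), evaluate ${\rm glob}(V_{\sqrt{k}Q_L})=|P/kQ_L|$ using Lemma \ref{QLP}, convert the candidate-list sum of squared quantum dimensions into $\frac{|Q/kQ_L|}{|P/Q|}{\rm glob}(L_{\wg}(k,0))$ via Theorem \ref{quanequal} and Lemma \ref{iden3}, and conclude from positivity of quantum dimensions that no further identifications can occur. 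The lattice bookkeeping you flag as the main obstacle does work out exactly as you predict, $|P/kQ_L|=|P/Q|\,|Q/kQ_L|$, matching the paper's computation.
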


\begin{proof} Recall that $L_{\wg}(k,0)=\oplus_{i\in Q/kQ_L}V_{\sqrt{k}Q_L+\frac{1}{\sqrt{k}}\beta_i}\otimes M^{0, \beta_i}.$ By Theorems \ref{DMNO1}, \ref{qtensor} and Lemma \ref{qtensor1}  we have
\begin{equation*}
\begin{split}
&  \ \ \ \ {\rm glob} (V_{\sqrt{k}Q_L}\otimes K(\g,k))\\
& ={\rm glob} ( L_{\wg}(k,0)) \cdot \big(\qdim_{V_{\sqrt{k}Q_L}\otimes K(\g,k)}\sum_{i\in Q/kQ_L} V_{\sqrt{k}Q_L+\frac{1}{\sqrt{k}}\beta_i}\otimes M^{0, \beta_i}\big)^2\\
                                                                         &={\rm glob} ( L_{\wg}(k,0))  \big(\sum_{i\in Q/kQ_L}\qdim_{V_{\sqrt{k}Q_L} }V_{\sqrt{k}Q_L+\frac{1}{\sqrt{k}}\beta_i} \cdot \qdim_{K(\g,k)}M^{0, \beta_i} \big)^2\\
                                                                         &={\rm glob} ( L_{\wg}(k,0))  |Q/kQ_L|^2.
\end{split}
\end{equation*}
From Lemma \ref{qtensor} we have
 $${\rm glob} (V_{\sqrt{k}Q_L}) {\rm glob} (K(\g,k))={\rm glob} ( L_{\wg}(k,0))  |Q/kQ_L|^2.$$

We need to determine the global dimension of $V_{\sqrt{k}Q_L}$ first.
Note that  $V_{\sqrt{k}Q_L}$ has exactly $|(\sqrt{k}Q_L)^{\circ}/Q_L|$ inequivalent
irreducible modules \cite{D} and each irreducible is a simple current.
It is evident that $ (\sqrt{k}Q_L)^{\circ}=\frac{1}{\sqrt{k}}Q_L^{\circ}$.
Then
\[
{\rm glob} (V_{\sqrt{k}Q_L}) =\sum_{i\in \frac{1}{\sqrt{k}}Q_L^{\circ}/ \sqrt{k}Q_L}(\qdim_{V_{\sqrt{k}Q_L}}V_{\sqrt{k}Q_L+\lambda_i})^2=|Q_L^{\circ}/ kQ_L|=|P/kQ_L|
\]
where we have used Lemma \ref{QLP} in the last equality.

Also recall that the irreducible $L_{\wg}(k,0)$-modules are $\{L_{\wg}(k,\Lambda)\mid \Lambda \in P^k_+\}$.
Using Theorem \ref{quanequal} gives
\begin{equation*}
\begin{split}
{\rm glob} (L_{\wg}(k,0))\cdot |Q/kQ_L| &=\sum_{\Lambda\in P^k_+ }|Q/kQ_L| \cdot (\qdim_{L_{\wg}(k,0)} L_{\wg}(k,\Lambda) ) ^2\\
                                                              &=\sum_{\Lambda\in P^k_+ }|Q/kQ_L| \cdot (\qdim_{K(\g,k)}M^{\Lambda, \lambda} ) ^2\\
                                                              &=\sum_{\Lambda\in P^k_+ }\sum_{i\in Q/kQ_L}  (\qdim_{K(\g,k)}M^{\Lambda, \Lambda+\beta_i} ) ^2
\end{split}
\end{equation*}
where $\lambda$ is any fixed element in $\Lambda+Q.$
So we get
\begin{equation*}
\begin{split}
{\rm glob} (K(\g,k))&=\frac{{\rm glob} ( L_{\wg}(k,0))  |Q/kQ_L|^2}{{\rm glob} (V_{\sqrt{k}Q_L}) }\\
                                     &=\frac{{\rm glob} ( L_{\wg}(k,0))  |Q/kQ_L|^2}{|P/ Q||Q/kQ_L|}\\
                                      &=\frac{{\rm glob} ( L_{\wg}(k,0))  |Q/kQ_L|}{|P/ Q|}\\
                                     &=\frac{\sum_{\Lambda\in P^k_+ }\sum_{i\in Q/kQ_L}  (\qdim_{K(\g,k)}M^{\Lambda, \Lambda+\beta_i} ) ^2}{|P/Q|}.
\end{split}
\end{equation*}
It follows from Theorem \ref{DR}, Lemma \ref{iden3} that the identification in Theorem \ref{DR} is complete
and $K(\g,k)$ has exactly $\frac{|P^k_+||Q/kQ_L|}{|P/Q|}$ inequivalent irreducible  $K(\g,k)$-modules.
\end{proof}

Finally we determine the fusion rules among the irreducible modules for $K(\g,k).$
Let
$$L_{\wg}(k,\Lambda^1)\boxtimes L_{\wg}(k,\Lambda^2)=\sum_{\Lambda^3\in P^k_+ }N_{\Lambda^1,\Lambda^2}^{\Lambda^3}L_{\wg}(k,\Lambda_3)$$
where  $\Lambda^1,\Lambda^2\in P_+^k$
and  $N_{\Lambda^1,\Lambda^2}^{\Lambda^3}$ are the fusion rules for the irreducible $L_{\wg}(k,0)$-modules.

\begin{thm} Let $\Lambda^1,\Lambda^2\in P_+^k$ and $i,j\in Q/kQ_L.$ Then
$$M^{\Lambda^1,\Lambda^1+\beta_i}\boxtimes M^{\Lambda^2,\Lambda^2+\beta_j}
=\sum_{\Lambda^3\in P_+^k}N_{\Lambda^1,\Lambda^2}^{\Lambda^3}M^{\Lambda^3, \Lambda^1+\Lambda^2+\beta_i+\beta_j}.$$
Moreover, $M^{\Lambda^3, \Lambda^1+\Lambda^2+\beta_i+\beta_j}$ with $N_{\Lambda^1,\Lambda^2}^{\Lambda^3}\ne 0$
are inequivalent $K(\g,k)$-modules.
\end{thm}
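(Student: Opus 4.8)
The plan is to exploit the decomposition of $L_{\wg}(k,\Lambda^1)\boxtimes L_{\wg}(k,\Lambda^2)$ as a $V_{\sqrt{k}Q_L}\otimes K(\g,k)$-module, combined with the quantum-dimension bound on fusion rules coming from Theorem \ref{DJX1}(3). First I would tensor the two decompositions from Section 3 and pass the fusion product through: since tensor product of $L_{\wg}(k,0)$-modules restricts to tensor products of $V_{\sqrt{k}Q_L}\otimes K(\g,k)$-modules, one gets on the one hand
\begin{equation*}
L_{\wg}(k,\Lambda^1)\boxtimes L_{\wg}(k,\Lambda^2)=\sum_{\Lambda^3}N_{\Lambda^1,\Lambda^2}^{\Lambda^3}\bigoplus_{m\in Q/kQ_L}V_{\sqrt{k}Q_L+\frac{1}{\sqrt{k}}(\Lambda^3+\beta_m)}\otimes M^{\Lambda^3,\Lambda^3+\beta_m},
\end{equation*}
and on the other hand, using that each $V_{\sqrt{k}Q_L+\frac{1}{\sqrt{k}}\beta}$ is a simple current of the lattice VOA and that $M^{0,\beta_i}$ is a simple current of $K(\g,k)$ (Lemma \ref{qtensor1}), the tensor product of the two $(\Lambda^1+\beta_i)$- and $(\Lambda^2+\beta_j)$-summands picks out exactly the diagonal $V_{\sqrt{k}Q_L+\frac{1}{\sqrt{k}}(\Lambda^1+\Lambda^2+\beta_i+\beta_j)}\otimes M^{\Lambda^1,\Lambda^1+\beta_i}\boxtimes M^{\Lambda^2,\Lambda^2+\beta_j}$. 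Matching the charge lattice cosets forces $\Lambda^3+\beta_m\equiv\Lambda^1+\Lambda^2+\beta_i+\beta_j$ in $P/(kQ_L)$, so on the $K(\g,k)$-side only the modules $M^{\Lambda^3,\Lambda^1+\Lambda^2+\beta_i+\beta_j}$ (equivalently $M^{\Lambda^3,\Lambda^3+\beta_m}$ for the matching $m$) can appear, each with multiplicity at most $N_{\Lambda^1,\Lambda^2}^{\Lambda^3}$.

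The second half is to show this upper bound is attained. Here I would invoke Proposition \ref{adl1}: an intertwining operator of type $\binom{L_{\wg}(k,\Lambda^3)}{L_{\wg}(k,\Lambda^1)\,L_{\wg}(k,\Lambda^2)}$ for $L_{\wg}(k,0)$ restricts, on the relevant $V_{\sqrt{k}Q_L}\otimes K(\g,k)$-submodules, to an intertwining operator of type $\binom{V_{\sqrt{k}Q_L+\cdots}\otimes M^{\Lambda^3,\cdots}}{(V_{\sqrt{k}Q_L+\cdots}\otimes M^{\Lambda^1,\Lambda^1+\beta_i})\,(V_{\sqrt{k}Q_L+\cdots}\otimes M^{\Lambda^2,\Lambda^2+\beta_j})}$, and by Proposition \ref{adl2} this factors as a product of an intertwining operator for $V_{\sqrt{k}Q_L}$ (which exists since the lattice pieces fuse to the correct single module — the lattice fusion rules are all $1$) and one for $K(\g,k)$ of type $\binom{M^{\Lambda^3,\cdots}}{M^{\Lambda^1,\Lambda^1+\beta_i}\,M^{\Lambda^2,\Lambda^2+\beta_j}}$. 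The injectivity in Proposition \ref{adl1}, applied in the direction $L_{\wg}(k,0)\supset V_{\sqrt{k}Q_L}\otimes K(\g,k)$ with the appropriate irreducible submodules, then shows the $K(\g,k)$ fusion rule is at least $N_{\Lambda^1,\Lambda^2}^{\Lambda^3}$; combined with the upper bound this gives equality.

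Alternatively, and perhaps more cleanly, one can get the lower bound by a quantum-dimension count: by Theorem \ref{DJX1}(3) and Theorem \ref{quanequal},
\begin{equation*}
\qdim_{K(\g,k)}\big(M^{\Lambda^1,\Lambda^1+\beta_i}\boxtimes M^{\Lambda^2,\Lambda^2+\beta_j}\big)=\qdim_{L_{\wg}(k,0)}L_{\wg}(k,\Lambda^1)\cdot\qdim_{L_{\wg}(k,0)}L_{\wg}(k,\Lambda^2)=\sum_{\Lambda^3}N_{\Lambda^1,\Lambda^2}^{\Lambda^3}\qdim_{L_{\wg}(k,0)}L_{\wg}(k,\Lambda^3),
\end{equation*}
while the upper bound just established writes $M^{\Lambda^1,\Lambda^1+\beta_i}\boxtimes M^{\Lambda^2,\Lambda^2+\beta_j}=\sum_{\Lambda^3}c_{\Lambda^3}M^{\Lambda^3,\Lambda^1+\Lambda^2+\beta_i+\beta_j}$ with $0\le c_{\Lambda^3}\le N_{\Lambda^1,\Lambda^2}^{\Lambda^3}$, and since $\qdim_{K(\g,k)}M^{\Lambda^3,\Lambda^1+\Lambda^2+\beta_i+\beta_j}=\qdim_{L_{\wg}(k,0)}L_{\wg}(k,\Lambda^3)>0$ by Theorem \ref{quanequal}, taking quantum dimensions forces $c_{\Lambda^3}=N_{\Lambda^1,\Lambda^2}^{\Lambda^3}$ for every $\Lambda^3$. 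The final inequivalence claim is then immediate: distinct $\Lambda^3,\Lambda^{3\prime}\in P_+^k$ give modules $M^{\Lambda^3,\Lambda^1+\Lambda^2+\beta_i+\beta_j}$, $M^{\Lambda^{3\prime},\Lambda^1+\Lambda^2+\beta_i+\beta_j}$ appearing in the $V_{\sqrt{k}Q_L}\otimes K(\g,k)$-decomposition of the irreducible $L_{\wg}(k,0)$-module $L_{\wg}(k,\Lambda^3)$ versus $L_{\wg}(k,\Lambda^{3\prime})$ respectively, and by Theorem \ref{idenm} together with the completeness of the identification in Theorem \ref{DR} these are inequivalent $K(\g,k)$-modules. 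The main obstacle I anticipate is the bookkeeping of the charge-lattice cosets in the first step — making sure the restriction of the $L_{\wg}(k,0)$-fusion product, when expressed in the $V_{\sqrt{k}Q_L}\otimes K(\g,k)$-basis, really does collapse to a single value of the $K(\g,k)$-label $\lambda=\Lambda^1+\Lambda^2+\beta_i+\beta_j$ rather than a $Q_L$-coset's worth of them; this uses Theorem \ref{DR}(2) to identify $M^{\Lambda^3,\Lambda^3+\beta_m}$ with $M^{\Lambda^3,\Lambda^1+\Lambda^2+\beta_i+\beta_j}$ whenever the cosets match.
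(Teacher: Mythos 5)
Your second and third paragraphs, taken together with the inequivalence claim, reproduce the paper's actual proof: the lower bound $N_{M^{\Lambda^1,\Lambda^1+\beta_i},M^{\Lambda^2,\Lambda^2+\beta_j}}^{M^{\Lambda^3,\Lambda^1+\Lambda^2+\beta_i+\beta_j}}\geq N_{\Lambda^1,\Lambda^2}^{\Lambda^3}$ comes from Proposition \ref{adl1} applied to $V_{\sqrt{k}Q_L}\otimes K(\g,k)\subset L_{\wg}(k,0)$ together with Proposition \ref{adl2} and the fact that the lattice fusion rule is $1$; the inequivalence of the targets makes $\sum_{\Lambda^3}N_{\Lambda^1,\Lambda^2}^{\Lambda^3}M^{\Lambda^3,\Lambda^1+\Lambda^2+\beta_i+\beta_j}$ a submodule of the fusion product; and the quantum dimension identity from Theorems \ref{DJX1}(3) and \ref{quanequal} forces this submodule to be everything. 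That is exactly the paper's argument, and it is complete as it stands.

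The genuine weak point is your first paragraph. The claim that the $L_{\wg}(k,0)$-fusion product ``restricts'' to the $V_{\sqrt{k}Q_L}\otimes K(\g,k)$-fusion product of the summands, so that only the diagonal term survives with multiplicity at most $N_{\Lambda^1,\Lambda^2}^{\Lambda^3}$, is not justified: fusion products do not pass to vertex operator subalgebras in this way, and the only general inequality available (Proposition \ref{adl1}) goes in the opposite direction --- a subalgebra has at least as many intertwining operators, which is why it yields a \emph{lower} bound on the $K(\g,k)$ fusion rules, not an upper one. Consequently your third paragraph, which as written invokes ``the upper bound just established,'' is resting on an unproved step; the fix is simply to run the quantum-dimension count against the lower bound instead (submodule of full quantum dimension equals the whole module, since every irreducible has $\qdim\geq 1$), which is what the paper does. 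One more small gloss: for the inequivalence of $M^{\Lambda^3,\Lambda^1+\Lambda^2+\beta_i+\beta_j}$ and $M^{\bar\Lambda,\Lambda^1+\Lambda^2+\beta_i+\beta_j}$ it is not enough that they come from different $L_{\wg}(k,\Lambda)$'s --- Theorem \ref{DR}(3) identifies modules across different $\Lambda$'s --- the point is that because the second label is the \emph{same} for both, such an identification would force $k\Lambda_s\in kQ_L$, i.e.\ $\Lambda_s\in Q_L$, which is impossible.
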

\begin{proof} We first prove that $M^{\Lambda^3, \Lambda^1+\Lambda^2+\beta_i+\beta_j}$ with $N_{\Lambda^1,\Lambda^2}^{\Lambda^3}\ne 0$
are inequivalent $K(\g,k)$-modules for $\Lambda\in P_+^k.$  Note from Theorem \ref{DR}, Lemma \ref{iden3} and Theorem \ref{idenm} that
$$M^{\Lambda^3, \Lambda^1+\Lambda^2+\beta_i+\beta_j}=M^{\bar\Lambda, \Lambda^1+\Lambda^2+\beta_i+\beta_j}$$
for some $\Lambda^3,\bar\Lambda\in P_+^k$ if and only if $\bar\Lambda=(\Lambda^3)^{(s)}$ for some $s$ with $a_s=1$
and
$$\Lambda^1+\Lambda^2+\beta_i+\beta_j+k\Lambda_s-(\Lambda^1+\Lambda^2+\beta_i+\beta_j)\in kQ_L.$$
That is, $\Lambda_s\in Q_L.$ But this is impossible \cite{DR}.

For  any $\Lambda^3\in P_+^k,$ let $N_{M^{\Lambda^1,\Lambda^1+\beta_i}, M^{\Lambda^2,\Lambda^2+\beta_j} }^{M^{\Lambda^3,\Lambda^1+\Lambda^2+\beta_i+\beta_j}}$ be the fusion rules determined by the irreducible
$K(\g,k)$-modules $M^{\Lambda^1,\Lambda^1+\beta_i},$ $M^{\Lambda^2,\Lambda^2+\beta_j} $ and
$M^{\Lambda^3,\Lambda^1+\Lambda^2+\beta_i+\beta_j}.$ We claim that
$$N_{M^{\Lambda^1,\Lambda^1+\beta_i}, M^{\Lambda^2,\Lambda^2+\beta_j} }^{M^{\Lambda^3,\Lambda^1+\Lambda^2+\beta_i+\beta_j}}\geq N_{\Lambda^1,\Lambda^2}^{\Lambda^3}.$$
For short we set $\lambda=\Lambda^1+\beta_i$ and $\mu=\Lambda^2+\beta_j.$
By Proposition \ref{adl1} we know that
$$N_{\Lambda^1,\Lambda^2}^{\Lambda^3}\leq N_{V_{\sqrt{k}Q_L+\frac{1}{\sqrt{k}}\lambda}\otimes M^{\Lambda^1,\lambda}, V_{\sqrt{k}Q_L+\frac{1}{\sqrt{k}}\mu}\otimes M^{\Lambda^2,\mu}}^{L_{\wg}(k,\Lambda^3)}
 =N_{V_{\sqrt{k}Q_L+\frac{1}{\sqrt{k}}\lambda}\otimes M^{\Lambda^1,\lambda}, V_{\sqrt{k}Q_L+\frac{1}{\sqrt{k}}\mu}\otimes M^{\Lambda^2,\mu}}^{V_{\sqrt{k}Q_L+\frac{1}{\sqrt{k}}(\lambda+\mu)}\otimes M^{\Lambda^3,\lambda+\mu}}.$$
Using Proposition \ref{adl2} and the identity
$$N_{V_{\sqrt{k}Q_L+\frac{1}{\sqrt{k}}\lambda}, V_{\sqrt{k}Q_L+\frac{1}{\sqrt{k}}\mu}}^{V_{\sqrt{k}Q_L+\frac{1}{\sqrt{k}}(\lambda+\mu)}}=1$$
from \cite{DL} proves the claim.

From the discussion above, we see that $\sum_{\Lambda^3\in P_+^k}N_{\Lambda^1,\Lambda^2}^{\Lambda^3}M^{\Lambda^3, \Lambda^1+\Lambda^2+\beta_i+\beta_j}$
is a $K(\g,k)$-submodule of $M^{\Lambda^1,\Lambda^1+\beta_i}\boxtimes M^{\Lambda^2,\Lambda^2+\beta_j}.$
On the other hand, by Lemma \ref{quanequal} we have
\begin{eqnarray*}
&&\ \ \ \qdim_{K(g,k)} M^{\Lambda^1,\Lambda^1+\beta_i}\boxtimes M^{\Lambda^2,\Lambda^2+\beta_j} \\
&&=\qdim_{K(g,k)} M^{\Lambda^1,\Lambda^1+\beta_i}\qdim_{K(g,k)} M^{\Lambda^2,\Lambda^2+\beta_j} \\
&&=\qdim _{L_{\wg}(k,0)} L_{\wg}(k,\Lambda^1)\qdim _{L_{\wg}(k,0)} L_{\wg}(k,\Lambda^2)\\
&&=\qdim _{L_{\wg}(k,0)} L_{\wg}(k,\Lambda^1)\boxtimes  L_{\wg}(k,\Lambda^2)\\
&&=\sum_{\Lambda^3\in P^k_+ }N_{\Lambda^1,\Lambda^2}^{\Lambda^3}\qdim_{L_{\wg}(k,0)}L_{\wg}(k,\Lambda^3)\\
&&=\sum_{\Lambda^3\in P_+^k}N_{\Lambda^1,\Lambda^2}^{\Lambda^3}\qdim_{K(\g,k)}M^{\Lambda^3, \Lambda^1+\Lambda^2+\beta_i+\beta_j}.
\end{eqnarray*}
So the quantum dimension of the  submodule $\sum_{\Lambda^3\in P_+^k}N_{\Lambda^1,\Lambda^2}^{\Lambda^3}M^{\Lambda^3, \Lambda^1+\Lambda^2+\beta_i+\beta_j}$ of $M^{\Lambda^1,\Lambda^1+\beta_i}\boxtimes M^{\Lambda^2,\Lambda^2+\beta_j}$ equals to the quantum
dimension of  $M^{\Lambda^1,\Lambda^1+\beta_i}\boxtimes M^{\Lambda^2,\Lambda^2+\beta_j}.$
The Theorem follows immediately.
\end{proof}

\end{document}